\documentclass[12pt]{amsart} 
\usepackage{amscd}
\usepackage{amssymb}
\usepackage{a4wide}
\usepackage{amstext}
\usepackage{amsthm}
\usepackage{xcolor}
\usepackage{cite}
\usepackage[T1,T2A]{fontenc}
\usepackage[utf8]{inputenc}
\usepackage{url}
\usepackage{amsfonts}
\usepackage{amssymb, amsthm}
\usepackage{amsmath}
\usepackage{mathtools}
\usepackage{needspace}
\usepackage[pdftex]{graphicx}
\usepackage{hyperref}
\usepackage{datetime}
\usepackage{epigraph}
\usepackage{verbatim}
\usepackage{mathtools}
\usepackage{xcolor}
\linespread{1.2}
\numberwithin{equation}{section}

\newcommand{\Z}{\mathbb{Z}}

\newcommand{\N}{\mathbb{N}}
\newcommand{\R}{\mathbb{R}}
\newcommand{\Q}{\mathbb{Q}}

\newcommand{\intt}{\int\limits}
\newcommand{\summ}{\sum\limits }

\newcommand{\eps}{\varepsilon}

\newcommand{\CC}{\mathbb {C}}

\newcommand{\RR}{\mathbb{R}}

\DeclareMathOperator{\beqq}{\begin{equation}} 

\DeclareMathOperator{\eeqq}{\end{equation}}

\renewcommand{\phi}{\varphi}

\newcommand{\ZZ}{\mathbb{Z}}

\newcommand{\rl}{\mathbb{R}}


\newcommand{\Lt}{L^2(\RR)}

\newcommand{\beq}{\begin{equation}}
\newcommand{\eeq}{\end{equation}}
\newcommand{\Fg}{\mathcal{F}}

\newcommand{\gG}{\mathfrak{G}} 

\newtheorem{Thm}{Theorem}[section]
\newtheorem{theorem}[Thm]{Theorem}

\newtheorem{lemma}[Thm]{Lemma}
\newtheorem{proposition}[Thm]{Proposition}
\newtheorem{corollary}[Thm]{Corollary}
\newtheorem{remark}[Thm]{Remark}

\newtheorem{definition}{Definition}

\textheight=20.8truecm

\begin{document}

\sloppy
\title[Gabor frames for rational functions]
{Gabor frames for rational functions}

\author{Yurii Belov}
\address{Yurii Belov,
\newline St.~Petersburg State University, St. Petersburg, Russia,
\newline {\tt j\_b\_juri\_belov@mail.ru} }
\author{Aleksei Kulikov}
\address{Aleksei Kulikov,
\newline St.~Petersburg State University, St. Petersburg, Russia,
\newline Department of Mathematical Sciences, Norwegian University of Science and Technology, NO-7491 Trondheim, Norway,
\newline {\tt lyosha.kulikov@mail.ru} }
\author{Yurii Lyubarskii}
\address{Yurii Lyubarskii,
\newline St.~Petersburg State University, St. Petersburg, Russia,
\newline Department of Mathematical Sciences, Norwegian University of Science and Technology, NO-7491 Trondheim, Norway,
\newline {\tt yuralyu@gmail.com} }

\thanks{The work was supported by the Russian Science Foundation grant 19-11-00058 and by Grant 275113 of the Research Council of Norway.}

\begin{abstract} 
We study the frame properties of the Gabor systems
$$\gG(g;\alpha,\beta):=\{e^{2\pi i \beta m x}g(x-\alpha n)\}_{m,n\in\mathbb{Z}}.$$ 
In particular, we prove that for Herglotz windows $g$ such systems always form a frame for $L^2(\mathbb{R})$
if $\alpha,\beta>0$, $\alpha\beta\leq1$. For general rational windows $g\in L^2(\mathbb{R})$ we prove that $\gG(g;\alpha,\beta)$
is a frame for $L^2(\mathbb{R})$ if $0<\alpha,\beta$, $\alpha\beta<1$, $\alpha\beta\not\in\mathbb{Q}$ and
$\hat{g}(\xi)\neq0$, $\xi>0$, thus confirming Daubechies conjecture for this class of functions. We also discuss
some related questions, in particular sampling in shift-invariant subspaces of $L^2(\mathbb{R})$.

\end{abstract}

\maketitle

\section{Introduction and main results}

   We  investigate the Gabor systems generated by linear combinations of the 
   Cauchy kernels, i.e. by the windows of the form   
\beq
\label{eq:0_1}
g(t)=\sum_{k=1}^N \frac {a_k}{t-iw_k}.
\eeq

We describe a new wide class of such functions for which the corresponding Gabor systems possess the frame property for all rectangular
lattices of density at least one. We also observe that for general rational windows of the form \eqref{eq:0_1}, the frame
property of the Gabor system $\{e^{2\pi i \beta m x}g(x-\alpha n)\}_{m,n\in\mathbb{Z}}$ depends on the rationality
of the product $\alpha\beta$, give a precise estimate how large the density $(\alpha\beta)^{-1}$ should be to garantee the frame property
of  the Gabor system and consider sampling in shift-invariant spaces, generated by the window $g$, as well as some
related matters.

One of the central motives of the article is hinted by the Daubechies conjecture \cite[p. 981]{Daub} which assumes that Gabor system is a frame for all $\alpha\beta<1$ whenever $g$ is positive function with positive Fourier transform. This conjecture has been disproved by Janssen \cite{Jans4}, yet in all known examples of functions which generates a Gabor system for all $\alpha\beta<1$ we encounter some kind of positivity.

%

\subsection{Gabor systems}  Given a function $g\in \Lt$  by $ \pi_{x,y}g$ we denote its time frequency shifts 
\beq
\label{eq:01}
\pi_{x,y}g(t)= e^{2\pi i yt}g(t-x), \, x,y\in \RR.
\eeq
For $\alpha,  \, \beta >0$ consider the Gabor system
\beq
\label{eq:02}
\gG(g; \alpha, \beta) = \{\pi_{\alpha m, \beta n}g; \, m,n\in \ZZ\}.
\eeq 

We say that $\gG(g; \alpha, \beta) $ is a  {\em frame} in $\Lt$ if the frame inequality


\beq
\label{eq:03}
A\|f\|^2 \leq \sum_{m,n}  \left | \langle f, \pi_{\alpha m, \beta n} g \rangle \right |^2 \leq B\|f\|^2, \ f\in \Lt
\eeq 
holds  for some $A>0$ and $B<\infty$. 

Gabor systems have been widely used   in signal analysis  and quantum mechanics because of time-frequency localization  
of its elements  $ \pi_{\alpha m, \beta n} g$. For sufficiently dense lattices $\alpha\mathbb{Z}\times \beta\mathbb{Z}$ the supports 
of $ \pi_{\alpha m, \beta n} g$ "cover" $ $ the whole time-frequency plane and
the frame inequality \eqref{eq:03} provides stable reconstruction of a signal $f$  from the inner products
$\langle f, \pi_{\alpha m, \beta n} g \rangle $. On the other hand $\gG(g; \alpha, \beta)$ never forms a frame if $\alpha\beta>1$ (see e.g. \cite{Gro}).
We refer the reader to \cite{Daub, Heil, Gro}, for the detailed history, setting  and discussion
of the problem.

\subsection{Frame set} The fundamental problem of the Gabor analysis is to describe the {\it frame set} of the window $g$:

$$
\Fg(g)= \{(\alpha,  \ \beta); \alpha, \, \beta  > 0 \ {\rm {and}} \ \gG(g; \alpha, \beta)  \ {\rm {is \ a \ frame \ in} } \ \Lt\}.
$$

If $\alpha\beta=1$ complete characterization of the frame set can be given in terms of Zak tarnsform $\mathcal{Z}g$ of the window $g$ 
(see e.g. \cite[Ch.8]{Gro}) but for $\alpha\beta<1$ the frame set $\Fg(g)$ may be very complicated  even for elementary functions $g$ see e.g. \cite{DS, He}. Even the simpler question: for which $g$ does  
 $\Fg(g)$ contain the whole set  $\Pi:= \{ (\alpha, \beta); \alpha, \beta >0, \  \alpha \beta <1 \} $ is also very difficult.

\smallskip
 
The answer has been obtained  for the Gaussian $e^{-x^2}$ \cite{L, S, SW}, truncated $\chi_{(0,\infty)}(x)e^{-x}$ and symmetric $e^{-|x|}$ exponential functions \cite{Jans2,Jans3} , the hyperbolic secant  $(e^x+e^{-x})^{-1}$ \cite{JansStr}. Despite numerous efforts  very little progress has been done  until 2011. A breakthrough  was achieved in  \cite{Gro2}   and  later in \cite{Gro1} where the authors 
 considered the class of  totally positive  functions of finite type and, by using another approach, Gaussian totally positive functions of finite type.   
These results can be viewed as  a contribution to the original conjecture of Daubechies which relates the frame property to the positivity of function and its Fourier transform.  Our results are to large extend motivated by \cite{Gro2} since the Fourier transforms of totally positive functions of finite type have the form $g(t)=P(t)^{-1}$, where  $P$ is a polynomial with simple zeroes located on the imaginary axis, such functions of course admit representation \eqref{eq:0_1}.  

\subsection{Herglotz functions} 

We suggest another approach based on techniques on interpolation by entire functions and dynamical systems. This approach allows us to describe the
frame set for Herglotz functions, study the frames with irrational densities as well as some other special cases.


By Herglotz function we mean a function of the form \eqref{eq:0_1} for which $a_k>0$. Such functions appear naturally in the spectral theory of the Jacobi matrices and the Shroedinger equations.
This class is in a sense opposite to  the class of totally positive functions: while the coefficients $a_k$ in the representation \eqref{eq:0_1}  of the totally positive functions have interlacing signs (and also satisfy a number of additional relations), they are just positive in  the case   of  Herglotz functions.  We will consider Herglotz functions with poles in the upper half-plane. It seems that we encountered another kind of positivity related to the Gabor frame property. 
 
\subsection{Main results} 
  
\begin{theorem}
Let $g$ 
be a  Herglotz function 
\beq
\label{Herg}
g(t)=\sum_{k=1}^N \frac {a_k}{t-iw_k},\quad a_k>0, w_k>0.
\eeq 
  Then 
\begin{equation}
\Fg(g)= \{(\alpha,  \ \beta);  \alpha\beta\leq 1\}.
\label{FS}
\end{equation}
\label{posth}
\end{theorem}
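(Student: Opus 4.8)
The plan is to establish the two inclusions separately. The inclusion $\Fg(g) \subset \{(\alpha,\beta) : \alpha\beta \leq 1\}$ is classical and requires no work beyond the density theorem for Gabor frames cited in the introduction (no frame exists when $\alpha\beta > 1$). So the real content is the reverse inclusion: every pair $(\alpha,\beta)$ with $\alpha\beta \leq 1$ belongs to $\Fg(g)$. For the borderline case $\alpha\beta = 1$ I would invoke the Zak transform criterion: after normalizing $\alpha = 1$, one must show $\mathcal{Z}g$ is bounded above and below on the fundamental domain, equivalently that $\mathcal{Z}g$ has no zeros and is continuous/bounded. For a Herglotz window $g(t) = \sum_k a_k/(t - iw_k)$ with all $a_k, w_k > 0$, I would compute the Zak transform explicitly — it is a sum of terms $a_k \mathcal{Z}[(t-iw_k)^{-1}]$, each of which can be summed as a geometric-type series — and exploit the sign coherence of the $a_k > 0$ to rule out cancellation and hence zeros. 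This positivity is exactly the ``kind of positivity'' the introduction alludes to, and I expect it to make the $\alpha\beta = 1$ case quite clean.

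For the main range $\alpha\beta < 1$ the strategy is to reduce the frame property to a sampling/interpolation problem for entire functions, following the ``interpolation by entire functions and dynamical systems'' approach advertised in the introduction. The key object is the short-time Fourier transform (or, after taking a partial Fourier transform in one variable, the Zibulski–Zeager-type matrix) of a test function $f$ against the window $g$. The rationality or irrationality of $\alpha\beta$ is not needed here since $g$ is genuinely rational with poles off the real axis; instead one uses that $\hat g$ is a rational function with no real zeros and fast decay, so that the relevant generating system in each ``fiber'' is a perturbation of a nice exponential/reproducing-kernel system. Concretely: fix $\beta$ and write the frame condition fiberwise in the frequency variable; in each fiber the Gabor coefficients become sampled values (along the lattice $\alpha\mathbb{Z}$) of a function lying in a de Branges-type space or a model space $K_\Theta$ determined by the poles $iw_k$ and the lattice parameter, and one must show this sampling sequence is a sampling sequence for that space with uniform constants over all fibers.

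The heart of the argument — and the step I expect to be the main obstacle — is proving the uniform lower frame bound. Upper bounds are routine: $g$ and $\hat g$ are both in $L^1 \cap L^2$ with exponential decay, so $\gG(g;\alpha,\beta)$ is automatically a Bessel system for all $\alpha,\beta$, and this already handles the second inequality in \eqref{eq:03}. For the lower bound one must rule out, uniformly in the fiber parameter, the existence of a unit-norm $f$ with nearly-vanishing Gabor coefficients. Here is where the dynamical-systems ingredient enters: the reproducing-kernel/model-space picture at lattice parameter $\alpha$ is linked, via the shift by $\alpha$, to an iteration of a Möbius-type or shift map, and one argues by a compactness-plus-uniqueness scheme — a limiting (``concentration'') argument showing that a counterexample sequence would produce a nonzero element of $L^2(\mathbb{R})$ orthogonal to the entire Gabor system $\gG(g;\alpha,\beta)$, contradicting its completeness. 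Completeness itself for Herglotz $g$ and $\alpha\beta \leq 1$ should follow from the explicit structure of $\hat g$ (a rational function non-vanishing on a half-line) together with a Müntz/quasi-analyticity or Wiener-type Tauberian argument. The delicate point is making the lower bound \emph{uniform} across all fibers $\xi \in [0,\beta)$ simultaneously, so that the abstract compactness argument can be run once; controlling the fibers near the degenerate values (where the exponential sums can nearly resonate) is precisely where the positivity $a_k > 0$ must be used a second time, to prevent the fiber operators from becoming singular.
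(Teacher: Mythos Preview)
Your outline does not reach the actual mechanism that makes the theorem work, and the proposed lower-bound argument has a real gap. The ``compactness-plus-uniqueness'' scheme you describe would, if it worked, show that completeness of $\gG(g;\alpha,\beta)$ implies the frame property; but this implication is false in general, and nothing in your sketch prevents a sequence of putative near-counterexamples $f_n$ from converging weakly to zero rather than to a nonzero element orthogonal to the system. You gesture at ``controlling the fibers near the degenerate values'' via positivity, but you never say what quantity is being controlled or how $a_k>0$ enters beyond the Zak-transform case. The model-space/sampling picture you propose is plausible background intuition but is not what the paper uses, and you have no concrete replacement for the paper's key lemma.

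What the paper actually does is quite different and fully constructive. After the general frame criterion (Theorem~\ref{mainth}), the lower bound is equivalent to bounded left-invertibility of a difference operator $(PG)(\xi)=G(\xi+n/\alpha)+\sum_{s<n}\frac{m_s(\{\xi\})}{m_n(\{\xi\})}G(\xi+s/\alpha)$. Writing this as a vector recursion $\Gamma(\xi+1/\alpha)=F(p_{\{\xi\}})\Gamma(\xi)+H(\xi)$ with $F(p)$ the Frobenius (companion) matrix of the polynomial $p_\xi(z)=m_n(\xi)^{-1}\sum_s m_s(\xi)z^s$, one needs uniform exponential decay of arbitrary products $F(p_{\xi_1})\cdots F(p_{\xi_m})$. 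The positivity $a_k>0$ is used precisely here, and in a very concrete way: it forces the roots of every $p_\xi$ to \emph{interlace} the fixed sequence $\mu_k=e^{-2\pi w_k/\alpha}\in(0,1)$. The heart of the proof is then a self-contained linear-algebra lemma (Lemma~\ref{Frobenius lemma}): for companion matrices of polynomials whose roots interlace a fixed finite set in $(0,1)$, any product of $m$ of them has norm $\le Cc^m$ with $c<1$. This is proved by a convexity reduction to $n+1$ extremal polynomials and the explicit construction of a norm on $\R^n$ (identified with polynomials of degree $<n$) in which each extremal $F(p_l)^T$ is a contraction with ratio $\mu_1<1$. None of this structure appears in your proposal; the interlacing-of-roots consequence of $a_k>0$ and the Frobenius-product contraction lemma are the missing ideas.
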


For the general function of the form \eqref{eq:0_1} relation \eqref{FS} does not hold generally speaking. Amazingly
we { \it almost always} have the frame property if $\alpha\beta\not\in\mathbb{Q}$:

\begin{theorem}
Let $g$ be of the form \eqref{eq:0_1} and be such that $m_0(\xi)=\sum_{k=1}^N a_k e^{2\pi \xi w_k}\neq0$, $\xi>0$ and $\Re w_k\neq\Re w_l$ for $k\neq l$. Then  the Gabor system  $\gG(g; \alpha, \beta)$   is a frame in $\Lt$ for any $(\alpha, \beta)\in \Pi$ such that $\alpha\beta\not\in \Q$. 
\label{irrational}
\end{theorem}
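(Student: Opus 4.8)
The plan is to pass to the Zak transform and reduce the frame property of $\gG(g;\alpha,\beta)$ to an a priori estimate that is uniform in a suitable sense, then exploit irrationality of $\alpha\beta$ via an ergodic/dynamical argument. First I would fix $\alpha,\beta$ with $\alpha\beta<1$, $\alpha\beta\notin\Q$ and recall the standard reduction: by the duality/Janssen-type machinery and the fact that $\gG(g;\alpha,\beta)$ is automatically a Bessel system for rational windows, the frame property is equivalent to a lower bound $\sum_{m,n}|\langle f,\pi_{\alpha m,\beta n}g\rangle|^2\ge A\|f\|^2$. I expect the paper to have set up, earlier than this excerpt, a representation of the Gabor frame operator (or of the relevant quadratic form) in which the pole structure of $g$ in \eqref{eq:0_1} is encoded through the function $m_0(\xi)=\sum_k a_k e^{2\pi\xi w_k}$; the hypothesis $m_0(\xi)\ne 0$ for $\xi>0$ is exactly the nonvanishing needed to invert a convolution/Toeplitz-type operator on a half-line. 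So step one is: rewrite the lower frame bound as invertibility (with uniform bounds) of a family of operators $T_\theta$ parametrized by a phase $\theta$ running over a torus, where the dependence on the lattice enters only through the rotation number $\alpha\beta$.

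Next, because $w_k$ have pairwise distinct real parts, $g$ decays exponentially and $\hat g$ is, up to the factor $m_0$, a sum of exponentials; this makes the Zak transform $\mathcal Zg$ a (quasi-)rational function of the two Zak variables, with controlled zero set. The key step is then a reduction of the frame inequality to a condition on a \emph{finite} system: one shows $\gG(g;\alpha,\beta)$ is a frame iff a certain periodization/sampling operator built from the translates $g(\cdot-\alpha n)$ — an operator on sequences whose symbol is again governed by $m_0$ — is bounded below, and this symbol condition is independent of $\beta$ once $\alpha\beta\notin\Q$. Here the irrationality is essential: it forces the relevant orbit $\{k\alpha\beta \bmod 1\}$ to be dense (equidistributed) in the torus, so the "bad" sets where the symbol could degenerate are swept out and one gets a genuine uniform lower bound rather than just positivity at each point. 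I would invoke the interpolation-by-entire-functions viewpoint advertised in the introduction: the translates of $g$ span a model (de Branges / Paley–Wiener-type) space, and $m_0\ne 0$ on $(0,\infty)$ guarantees that the corresponding generating function has no real zeros, hence the sampling/interpolation problem is well posed with stability constants that survive the limit.

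The main obstacle, I expect, is upgrading pointwise nonvanishing of the symbol on the dense orbit to a \emph{uniform} (positive, $\theta$-independent) lower bound — i.e. ruling out that $\inf_\theta$ of the relevant determinant is zero even though each value is nonzero. This is where $\alpha\beta\notin\Q$ does real work: by equidistribution the infimum over the orbit equals the infimum over the whole torus of a continuous (quasi-rational) function, which is then attained and positive by compactness, provided one has already shown the function extends continuously and is nowhere zero on the closed torus. Establishing that continuous extension and nonvanishing — controlling the rational function $\mathcal Zg$ near its poles and checking that no zero of the numerator lands on the relevant real locus — is the technical heart; the hypotheses $\Re w_k$ distinct and $m_0\ne 0$ on $(0,\infty)$ are precisely what I would use to close that gap. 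Once the uniform lower bound is in hand, the frame inequality \eqref{eq:03} follows, completing the proof; I would also note that the rational (as opposed to irrational) case genuinely fails in general, consistent with Theorem~\ref{posth} being special to Herglotz windows.
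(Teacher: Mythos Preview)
Your high-level intuition --- irrationality gives density of an orbit, $m_0\ne 0$ supplies nondegeneracy --- is right, but the concrete mechanism you describe has a genuine gap. The paper's frame criterion (Theorem~\ref{mainth}) reduces the lower bound to
\[
\int_0^{1/\alpha}\sum_{n\in\Z}\Bigl|\sum_{s=0}^{N-1}G\bigl(\xi+n+\tfrac{s}{\alpha}\bigr)\,m_s(\xi)\Bigr|^2\,d\xi\ \gtrsim\ \|G\|_2^2,
\]
which for fixed $\xi$ is the action of an infinite \emph{banded} matrix $L_\xi$ on the sequence $\{G(\xi+j/\alpha)\}_j$, with rows built from the strings $M(\cdot)=(m_0,\dots,m_{N-1})$ shifted one column each time the argument crosses an integer. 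This is not multiplication by a scalar symbol on a torus, and there is no single continuous function whose nonvanishing plus compactness would deliver the uniform bound. Your step ``the infimum over the orbit equals the infimum over the closed torus of a continuous quasi-rational function, hence attained and positive'' therefore has nothing to apply to; you never identify the function, and in fact none exists in the form you need.

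What the paper actually does is prove a finite-rank lemma (Lemma~\ref{tech}): there exist one point $\hat\xi$, an $l>N$, and $\delta>0$ such that a specific $(K+l+1)\times(K+N)$ submatrix $D(\xi,l)$ of $L_\xi$ has full column rank for every $\xi\in[\hat\xi-\delta,\hat\xi+\delta]$. Proving this is the technical heart: one chooses widely separated ``double rows'' $Q_1\ll\cdots\ll Q_{N-1}$ so that a Vandermonde-type determinant acquires a dominant diagonal term, and this is exactly where $\Re w_k\ne\Re w_l$ enters. Irrationality is then used only for \emph{density} (not equidistribution): from any starting $\xi$ the orbit under $\xi\mapsto\xi-\tau\pmod{1/\alpha}$, $\tau=\tfrac1\alpha-1$, reaches the good window in boundedly many steps. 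The hypothesis $m_0\ne 0$ is not a global symbol condition; it makes the ``navigation'' block upper-triangular with nonzero diagonal entries $m_0(\cdot)$, so that the full-rank property on the window propagates to a bounded left inverse of the whole $L_\xi$. The missing idea in your sketch is precisely this finite linear-algebra lemma and the resulting block decomposition of $L_\xi$; without it there is no bridge from the pointwise hypothesis $m_0\ne 0$ to a uniform operator lower bound.
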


Observe that, if $\Re w_k<0$ for all $k$, we have   $m_0(\xi)=\hat g(\xi)$,  $\xi>0$.  
We normalize the Fourier transform as
$$
\hat g(\xi) =\int_{-\infty}^\infty g(t) e^{-2\pi i t \xi} dt.
$$

The next result is an important particular case of Theorem \ref{irrational}.

\begin{theorem}
Let $g$ 
be a  function of the form  \eqref{eq:0_1},  $\Re w_k <0$, $k=1,2, \ \ldots \ , N$, $\Re w_k \neq \Re w_j$ for $j\neq k$  and also 
$ \hat g (\xi) \neq 0$ for 
 $\xi>0$. 
 Then $\gG(g; \alpha, \beta)$ is a frame in $\Lt$ for all $(\alpha, \beta) \in \Pi$, such that $\alpha\beta \not\in \Q$.
\label{irrth}
\end{theorem}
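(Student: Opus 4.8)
The plan is to deduce this statement directly from Theorem \ref{irrational}, which has already been established for windows of the form \eqref{eq:0_1} under the hypotheses that $m_0(\xi)=\sum_{k=1}^N a_k e^{2\pi\xi w_k}\neq 0$ for $\xi>0$ and that the real parts $\Re w_k$ are pairwise distinct. Thus the only thing to verify is that, under the hypotheses of the present theorem—namely $\Re w_k<0$ for all $k$, the $\Re w_k$ pairwise distinct, and $\hat g(\xi)\neq 0$ for $\xi>0$—the function $m_0$ coincides (up to an inessential factor) with $\hat g$ on the positive half-line, so that the nonvanishing of $\hat g$ transfers to the nonvanishing of $m_0$. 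The remark immediately following Theorem \ref{irrational} already asserts precisely this: when $\Re w_k<0$ for every $k$ one has $m_0(\xi)=\hat g(\xi)$ for $\xi>0$.

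Concretely, I would first compute the Fourier transform of a single Cauchy kernel. For $\Re w<0$, write $w=-|\Re w|+i\,\Im w$ so that $iw$ lies in the lower half-plane; then $t\mapsto (t-iw)^{-1}$ is in $L^2(\RR)$ and a standard contour-integration (residue) computation gives
\beq
\int_{-\infty}^\infty \frac{e^{-2\pi i t\xi}}{t-iw}\,dt = -2\pi i\, e^{2\pi\xi w}\qu(\xi>0),
\eeq
with the integral vanishing for $\xi<0$; here one closes the contour in the lower half-plane for $\xi>0$, picking up the pole at $t=iw$ (which sits in the lower half-plane precisely because $\Re w<0$). Summing over $k$ with coefficients $a_k$ yields
\beq
\hat g(\xi) = -2\pi i\sum_{k=1}^N a_k e^{2\pi\xi w_k} = -2\pi i\, m_0(\xi),\qquad \xi>0.
\eeq
Hence $\hat g(\xi)\neq 0$ for $\xi>0$ is equivalent to $m_0(\xi)\neq 0$ for $\xi>0$.

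With this identity in hand the proof is immediate: the hypotheses of Theorem \ref{irrth} imply all the hypotheses of Theorem \ref{irrational} (the pairwise-distinctness of $\Re w_k$ is assumed in both, and $m_0\neq 0$ on $(0,\infty)$ follows from $\hat g\neq 0$ there via the displayed identity), so $\gG(g;\alpha,\beta)$ is a frame in $\Lt$ for every $(\alpha,\beta)\in\Pi$ with $\alpha\beta\notin\Q$. There is essentially no obstacle here; the only point requiring a modicum of care is the placement of the poles $iw_k$ relative to the real axis in the contour argument, which is exactly where the sign condition $\Re w_k<0$ is used—without it the kernel $(t-iw_k)^{-1}$ could have its pole in the upper half-plane and one would instead land on $e^{2\pi\xi w_k}$ for $\xi<0$, which is the reason the general statement is phrased in terms of $m_0$ rather than $\hat g$.
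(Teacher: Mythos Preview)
Your proposal is correct and follows exactly the route the paper takes: the paper states explicitly that Theorem~\ref{irrth} is ``an important particular case of Theorem~\ref{irrational}'' and records the key observation (immediately after Theorem~\ref{irrational}) that when $\Re w_k<0$ for all $k$ one has $m_0(\xi)=\hat g(\xi)$ for $\xi>0$, so the hypothesis $\hat g\neq 0$ on $(0,\infty)$ becomes $m_0\neq 0$ there. You supply the residue computation justifying this identity (up to the harmless factor $-2\pi i$), which the paper leaves to the reader.
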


So, for the given class of functions Daubechies conjecture holds literally. Later we will see that the assumption
$\alpha\beta\not \in \mathbb{Q}$ cannot be dropped generally speaking.

\subsection{Near the critical hyperbola} Another interesting question is related to the frame property of $\gG(g; \alpha, \beta)$ when the point $(\alpha,\beta)$ approaches the critical hyperbola $\alpha\beta=1$.

\smallskip

Let $g$ be of the form \eqref{eq:0_1}. Consider the function
$$\mathcal{Z}(z,\xi)=\sum_{k=1}^{N}\frac{a_ke^{2\pi\xi w_k}}{1-ze^{2\pi w_k\slash\alpha}}.$$
In case $\Re w_k<0$, $k=1,...,N$ this function coincides with the classical Zak transform of $g$ up to a non-zero factor.

\begin{theorem}
Let $\Re w_k>0$, $k=1,...,N$  and also $\Re \mathcal{Z}(e^{2\pi i t},\xi)>0$ for all $t\in\mathbb{R}$, $\xi\in\mathbb{R}$. Then there exists $\alpha_0<1$ such that $\gG(g;\alpha,1)$ is a frame in $L^2(\mathbb{R})$ for all $\alpha\in(\alpha_0,1]$.
\label{close1th}
\end{theorem}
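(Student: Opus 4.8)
The plan is to work on the critical line $\alpha\beta=1$, where the frame property of $\gG(g;\alpha,1)$ is governed entirely by the Zak transform: $\gG(g;\alpha,\alpha^{-1})$ is a frame precisely when the Zak transform $\mathcal Z g$ is bounded away from $0$ (and $\infty$) on the fundamental domain (see \cite[Ch.~8]{Gro}). Since $\Re w_k>0$, the poles $iw_k$ lie in the lower half-plane, so $g$ is not of the classical ``short-time'' type and the relevant object is the regularized series $\mathcal Z(e^{2\pi i t},\xi)$ introduced above; the first step is to verify that, up to a nonvanishing analytic factor, $\mathcal Z(e^{2\pi i t},\xi)$ is the genuine Zak transform of $g$ at $\alpha=1$, so that the hypothesis $\Re\mathcal Z(e^{2\pi i t},\xi)>0$ forces $\mathcal Z g(t,\xi)\ne 0$ for every $(t,\xi)$. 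By quasi-periodicity of the Zak transform it suffices to check this for $(t,\xi)$ in a compact fundamental square, where $\Re\mathcal Z>0$ together with compactness gives $|\mathcal Z g|\ge c>0$. One must also record the trivial upper bound $|\mathcal Z g|\le C<\infty$, which follows from $g\in L^2$ and smoothness of the kernel away from its poles. This establishes that $\alpha=1$ itself lies in $\Fg(g)$.

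The second and main step is to promote this to an open one-sided neighborhood $(\alpha_0,1]$. The natural tool is a perturbation/continuity argument for the lower frame bound as $\alpha$ varies. Concretely, for $\alpha\le 1$ one can use an oversampling comparison: writing $\alpha=1/\beta$ is not available when $\alpha<1$, so instead one invokes the fact that decreasing $\alpha$ (refining the time lattice) cannot destroy the upper bound, while for the lower bound one compares $\gG(g;\alpha,1)$ with $\gG(g;1,1)$ via an argument of the type ``a frame with redundancy one that is stable under small lattice perturbations.'' The cleanest route is to show that the lower frame bound $A(\alpha)$ of $\gG(g;\alpha,1)$ depends lower-semicontinuously (in fact continuously) on $\alpha$ near $\alpha=1$; since $A(1)>0$ by Step~1, this yields $A(\alpha)>0$ for $\alpha$ in a left-neighborhood of $1$. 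Continuity of $A(\alpha)$ in turn reduces to continuity in $\alpha$ of the frame operator $S_\alpha f=\sum_{m,n}\langle f,\pi_{\alpha m,n}g\rangle\pi_{\alpha m,n}g$ in the strong (or norm) operator topology, which for a window as regular as \eqref{eq:0_1} — rational, hence smooth off the poles and with controlled decay — can be obtained by estimating $\|\pi_{\alpha m,n}g-\pi_{\alpha' m,n}g\|$ and summing, using that $g$ and $g'$ lie in the Wiener amalgam space $W(L^\infty,\ell^1)$ after the harmless subtraction of the behavior at the poles, or directly from the Walnut representation of $S_\alpha$ whose kernel depends continuously on $\alpha$.

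I expect the main obstacle to be Step~2: the bald appeal to ``continuity of the frame bound in $\alpha$'' is delicate precisely because $\alpha=1$ is the critical value, where the frame operator degenerates in its dependence on $\beta$ and where small changes in $\alpha$ can in principle make the system overcomplete in a way that is not uniformly controlled. One must be careful that the perturbation estimate is uniform down to $\alpha=1$ and does not secretly require $\alpha$ bounded away from $1$. A safe way to handle this is to pass through the Zak transform for all $\alpha$ simultaneously: for $\alpha$ irrational near $1$, Theorem~\ref{irrational} already (after checking the nonvanishing hypothesis $m_0(\xi)\ne 0$, which here must be re-examined since $\Re w_k>0$) would give the frame property, so the real content is to fill in the rational $\alpha$'s and $\alpha=1$; for these one uses the $\alpha\beta=1$ characterization at $\alpha=1$ and a direct continuity argument on the remaining rational points. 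Closing this gap — reconciling the rational and irrational regimes near the hyperbola and making the constants uniform — is where the technical weight of the proof lies, and it is essentially the same mechanism (control of the Walnut/Zak kernel as a function of the lattice) that underlies the other main theorems of the paper.
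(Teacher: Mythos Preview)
Your approach is genuinely different from the paper's, and it has a real gap in Step~2.

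The paper does not argue by perturbation of the frame bound at $\alpha=1$. Instead it stays inside the frame criterion (Theorem~\ref{mainth}) and the dynamical-systems reformulation from the Herglotz proof: one must show that products $F(p_{\{\xi\}})F(p_{\{\xi-1/\alpha\}})\cdots F(p_{\{\xi-l/\alpha\}})$ of Frobenius matrices decay geometrically. The hypothesis $\Re\mathcal Z(e^{2\pi i t},\xi)>0$ is used, via the argument principle applied to \eqref{Zakm}, to show that every zero of $p_\xi$ lies strictly inside the unit disk, uniformly in $\xi\in[0,1]$; hence the spectral radius of each $F(p_\xi)$ is at most some $\rho<1$, and $\|F(p_\xi)^M\|\le q<1$ for a fixed $M$ uniform in $\xi$. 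The point of taking $\alpha$ close to $1$ is then purely that $\{\xi-k/\alpha\}$ varies slowly in $k$, so any block of $M$ consecutive factors is close to $F(p_{\{\xi\}})^M$ and is therefore a uniform contraction. No continuity of the frame operator in $\alpha$ is invoked.

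Your Step~2 does not go through as written. The standard ``frame set is open in the lattice parameter'' results (Feichtinger--Kaiblinger and the Walnut-representation estimates you allude to) require $g\in M^1$, or at least $g\in W(L^\infty,\ell^1)$. A single Cauchy kernel $1/(t-iw)$ decays only like $1/|t|$, so a generic $g$ of the form \eqref{eq:0_1} is \emph{not} in $W_0$ and not in $M^1$; the ``harmless subtraction of the behavior at the poles'' does not help, since the poles are off $\RR$ and the obstruction is the decay at infinity, not local singularities. Consequently the uniform estimate on $\sum_{m,n}\|\pi_{\alpha m,n}g-\pi_{\alpha' m,n}g\|$ that you need simply diverges, and the norm/strong continuity of $S_\alpha$ at $\alpha=1$ is not available. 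Your fallback of invoking Theorem~\ref{irrational} for irrational $\alpha$ near $1$ also fails: that theorem requires $m_0(\xi)\neq 0$ for $\xi>0$ and $\Re w_k\neq\Re w_l$, neither of which is assumed in Theorem~\ref{close1th}, and you give no argument that the positivity of $\Re\mathcal Z$ implies them. So as it stands the proposal establishes only the case $\alpha=1$ and leaves the interval $(\alpha_0,1)$ unproved.
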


By the natural renormalization we have
$$\gG(g(\cdot\slash\beta);\alpha,\beta) \text{ is a frame in } L^2(\mathbb{R})$$
if $\alpha\beta\in(\alpha_0,1]$.

\smallskip
Combining this statement with Theorem \ref{close1th} and some of its corollaries one can obtain.

\begin{theorem}
Let $g$ be of the form \eqref{eq:0_1}, $\Re w_k<0$, $k=1,...,N$,  $\hat{g}(\xi)\neq0$, $\xi>0$, $\Re w_k \neq \Re w_j$ for $j\neq k$ and also
$$\Re\mathcal{Z}(e^{2\pi i t},\xi)>0,\quad  t\in\mathbb{R}, \xi\in\mathbb{R}$$
holds. Then $\gG(g;\alpha,1)$ is a frame in $L^2(\mathbb{R})$ for all $\alpha\in(0,1]$, except  perhaps a finite number of 
exceptional values.
\label{close2th}
\end{theorem}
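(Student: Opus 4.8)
The plan is to derive Theorem \ref{close2th} from Theorems \ref{irrational} and \ref{close1th} by a combination of dilation, approximation, and a compactness/real-analyticity argument that controls the exceptional rational values.

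\medskip

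First I would reduce the case $\beta=1$ to the setting where Theorem \ref{close1th} applies. Starting from $g$ as in \eqref{eq:0_1} with $\Re w_k<0$, observe that the function $\tilde g(t)=\overline{g(-t)}$ (or an appropriate reflection) is again of the form \eqref{eq:0_1} but with poles in the \emph{upper} half-plane, i.e. $\Re w_k>0$; moreover reflection is a unitary operation that intertwines the Gabor systems $\gG(g;\alpha,\beta)$ and $\gG(\tilde g;\alpha,\beta)$ up to relabeling of indices, so the frame property is preserved. Under this reflection the hypothesis $\hat g(\xi)\neq 0$ for $\xi>0$ becomes exactly the positivity-type hypothesis on $m_0$ from Theorem \ref{irrational}, and the Zak-transform positivity hypothesis $\Re\mathcal{Z}(e^{2\pi it},\xi)>0$ is precisely what is needed for Theorem \ref{close1th}. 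Thus for $\alpha$ in an interval $(\alpha_0,1]$ the system $\gG(g;\alpha,1)$ is a frame by Theorem \ref{close1th}, and for every irrational $\alpha\in(0,1)$ it is a frame by Theorem \ref{irrational} (with $\beta=1$, so $\alpha\beta=\alpha\notin\Q$). It remains to handle the rationals $\alpha=p/q\in(0,\alpha_0]$.

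\medskip

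The key step for the rational values is a semicontinuity / perturbation argument. For $\alpha\beta=p/q$ rational the frame property of $\gG(g;\alpha,\beta)$ is governed by the invertibility of a finite family of $q\times q$ (or $p\times q$) matrix-valued functions built from the Zak transform, a standard reduction (see \cite[Ch.~8]{Gro}). For windows of the form \eqref{eq:0_1} the entries of these matrices depend \emph{real-analytically} on the parameter $\alpha$ (after clearing denominators, the relevant determinant is a real-analytic, indeed in suitable coordinates a trigonometric-polynomial-type, function of $\alpha$ on the open interval $(0,1)$). The lower frame bound fails at $\alpha$ precisely when this determinant vanishes somewhere on the torus. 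Since we already know from Theorem \ref{irrational} that the determinant is nonvanishing for \emph{all} irrational $\alpha\in(0,1)$, and irrationals are dense, the real-analytic function $\alpha\mapsto(\text{minimum of }|\det|\text{ over the torus})$ cannot vanish on a set with an accumulation point inside $(0,1)$ unless it is identically zero — which it is not, by Theorem \ref{close1th} near $\alpha=1$. Hence the zero set is discrete in $(0,1)$, and combined with the closed interval $(\alpha_0,1]$ of good parameters, only finitely many rational $\alpha\in(0,1]$ can be exceptional.

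\medskip

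The main obstacle I anticipate is making the real-analytic dependence on $\alpha$ genuinely rigorous across all rational scales simultaneously: as $\alpha=p/q$ varies, the \emph{size} $q$ of the Zak-transform matrix jumps, so one cannot literally track a single determinant. The right device is probably the Janssen/Ron--Shen duality together with the fiberization of the frame operator, expressing the lower frame bound as $\inf_{\xi}\lambda_{\min}\bigl(G(\xi)\bigr)$ where $G(\xi)$ is an infinite but trace-class-type Gram matrix whose entries are explicit rational-exponential functions of $\alpha$ and $\xi$; this quantity \emph{is} a well-defined function of $\alpha\in(0,1)$ uniformly, and one must show it is real-analytic (or at least that its zero set is locally finite) by exhibiting it as a convergent series of real-analytic functions with locally uniform control — the decay of the $a_k/(t-iw_k)$ tails gives the needed convergence. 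Once that uniform analyticity is in hand, the identity-theorem argument above closes the proof; packaging the passage from "frame bound positive" to "Gabor frame" is then routine given \eqref{eq:03} and the already-established upper bound $B<\infty$ (which holds for all $(\alpha,\beta)\in\Pi$ since $g\in\Lt$ is bounded and decaying).
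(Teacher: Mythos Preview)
Your overall architecture --- cover $(\alpha_0,1]$ via Theorem~\ref{close1th}, cover the irrationals in $(0,1)$ via Theorem~\ref{irrational}, and then argue that only finitely many rationals in $(0,\alpha_0]$ can fail --- is exactly the paper's. The gap is in how you execute the last step.

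The real-analyticity device does not go through. The quantity you propose to track, $\alpha\mapsto\inf_{\xi}\lambda_{\min}\bigl(G_\alpha(\xi)\bigr)$ (or $\alpha\mapsto\min_{\xi}|\det(\cdot)|$), is an \emph{infimum} over the torus of a family of smooth functions; an infimum of analytic functions is in general only continuous, not real-analytic, so the identity theorem tells you nothing about its zero set. You already flagged that the finite Zak-type matrices change size with the denominator $q$ and therefore cannot be assembled into a single analytic determinant in $\alpha$; passing to the infinite Gram matrix does not repair this, because you still take $\inf_\xi$ at the end. Knowing that the lower frame bound is positive on the dense set of irrationals, plus continuity, does not prevent it from vanishing at infinitely many rationals accumulating at~$1$ --- and absent the Zak-positivity hypothesis that is exactly what can happen (cf.\ \cite{LyubNes}).

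The paper closes this gap without any analyticity, via Remark~\ref{remirr}. In the proof of Theorem~\ref{irrational} the irrationality of $\alpha$ is used \emph{only} to guarantee that the rotation orbit $\xi\mapsto\xi-\tau$ (with $\tau=1/\alpha-1$, taken modulo $1/\alpha$) eventually lands in the fixed interval $(\hat\xi-\delta,\hat\xi+\delta)$ produced by Lemma~\ref{tech}. For rational $\alpha=p/q$ this orbit is periodic of length $q$, but as soon as $q$ is large compared with $1/(1-\alpha)$ the orbit is $\delta$-dense and the full-rank construction of $D(\xi,l)$ goes through verbatim. Hence on $(0,\alpha_0]$ only rationals with denominator below a fixed bound $Q$ can be exceptional, and there are finitely many of those; Theorem~\ref{close1th} then disposes of $(\alpha_0,1]$. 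The finiteness is thus arithmetic --- small-denominator rationals are sparse --- rather than analytic.
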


We also want to highlight the following corollary of Theorem \ref{close2th}.
\begin{corollary} Let $g$ be of the form \eqref{eq:0_1}, $w_k<0$, $a_k\in\mathbb{R}$. If $\hat{g}$ is positive, decreasing, convex function on the positive semiaxis $\mathbb{R}_+$, then $\gG(g;\alpha,\beta)$ is a frame for any pair $(\alpha,\beta)$ sufficiently close
 to the critical hyperbola $\alpha\beta=1$, (i.e. for $\alpha>\alpha(\beta)$).
\label{corrconvex}
\end{corollary}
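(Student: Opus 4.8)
\textbf{Proof proposal for Corollary~\ref{corrconvex}.}

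The plan is to deduce the Corollary from Theorem~\ref{close2th} after a harmless renormalisation. First I would reduce to the case $\beta=1$: setting $\tilde g(t)=g(t/\beta)$, the window $\tilde g=\sum_k\frac{\beta a_k}{t-i\beta w_k}$ is again of the form \eqref{eq:0_1}, with poles $\beta w_k<0$ and real coefficients, and $\widehat{\tilde g}(\xi)=\beta\,\hat g(\beta\xi)$ is still positive, decreasing and convex on $\mathbb{R}_+$. Since the unitary dilation $f\mapsto\beta^{-1/2}f(\cdot/\beta)$ identifies the frame property of $\gG(g;\alpha,\beta)$ with that of $\gG(\tilde g;\alpha\beta,1)$, it suffices to show that, whenever $g$ satisfies the hypotheses of the Corollary, $\gG(g;\alpha,1)$ is a frame for every $\alpha$ in some left neighbourhood of $1$; ``$(\alpha,\beta)$ sufficiently close to the hyperbola $\alpha\beta=1$'' then means precisely that $\alpha\beta$ is close enough to $1$ to avoid the finitely many exceptional values of Theorem~\ref{close2th}.

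Next I would check the hypotheses of Theorem~\ref{close2th} for such $g$. After merging equal terms in \eqref{eq:0_1} we may assume the $w_k$ are pairwise distinct, so $\Re w_k\neq\Re w_j$ for $j\neq k$ is automatic, and $\hat g(\xi)\neq0$ for $\xi>0$ is immediate from positivity. The substantial point is the sign condition $\Re\mathcal{Z}(e^{2\pi i t},\xi)>0$. Since $\Re w_k<0$ we have $|e^{2\pi i t}e^{2\pi w_k/\alpha}|=e^{2\pi w_k/\alpha}<1$, so each summand of $\mathcal{Z}$ expands into a convergent geometric series, and rearranging the absolutely convergent double sum yields
\[
\mathcal{Z}(e^{2\pi i t},\xi)=\sum_{n\geq0}e^{2\pi i n t}\,m_0(\xi+n/\alpha),
\qquad
\Re\mathcal{Z}(e^{2\pi i t},\xi)=\sum_{n\geq0}m_0(\xi+n/\alpha)\cos(2\pi n t).
\]
Now recall $m_0=\hat g$ on $\mathbb{R}_+$. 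For $\xi>0$ the coefficient sequence $c_n:=m_0(\xi+n/\alpha)=\hat g(\xi+n/\alpha)$ is nonnegative and nonincreasing, it is a \emph{convex} sequence ($c_n-2c_{n+1}+c_{n+2}\geq0$, since $\hat g$ is convex and the points $\xi+n/\alpha$ are equally spaced), and $c_n\to0$ because $w_k<0$. For such a sequence the classical positivity of cosine sums with convex coefficients applies: two summations by parts give $c_n=\sum_{m\geq0}(m+1-n)_+\,\Delta^2 c_m$ with $\Delta^2 c_m:=c_m-2c_{m+1}+c_{m+2}\geq0$, and since $\sum_{n\geq0}(m+1-n)_+\cos(2\pi n t)=\tfrac{m+1}{2}\bigl(1+F_m(2\pi t)\bigr)$ with $F_m\geq0$ the Fej\'er kernel, one gets
\[
\Re\mathcal{Z}(e^{2\pi i t},\xi)=\sum_{m\geq0}\Delta^2 c_m\cdot\tfrac{m+1}{2}\bigl(1+F_m(2\pi t)\bigr)\ \geq\ \tfrac12\,c_0\ =\ \tfrac12\,\hat g(\xi)\ >\ 0 .
\]

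It remains to pass from $\xi>0$ to the full range of $\xi$ required in Theorem~\ref{close2th}, and this is the step I expect to be the main obstacle. For $\xi<0$ the first few sample points $\xi+n/\alpha$ lie to the left of the origin, where $m_0$ is only the analytic continuation of $\hat g$ and may fail to be positive, monotone or convex, so the estimate above does not apply verbatim. The remedy is to exploit that, for $\Re w_k<0$, $\mathcal{Z}$ coincides with the quasi-periodic Zak transform of $g$ up to a nonvanishing factor; consequently the sign condition only has to be verified for $\xi$ in a bounded fundamental interval, and there one can take the representative in $\mathbb{R}_+$ (or at worst only slightly negative, still inside the interval on which convexity of $\hat g$ persists by continuity), which reduces it to the Fej\'er estimate just established. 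Once the sign condition is in hand, Theorem~\ref{close2th} gives the frame property of $\gG(g;\alpha,1)$ for $\alpha$ near $1$, and the renormalisation of the first step yields the Corollary. In this scheme the cosine-sum inequality is classical and the dilation is routine; the genuinely delicate point is the reduction of the $\xi$-range.
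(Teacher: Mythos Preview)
Your argument is essentially the paper's: verify the Zak-transform sign condition via Remark~\ref{r1}, observe that the cosine coefficients $m_0(\xi+n/\alpha)=\hat g(\xi+n/\alpha)$ form a positive, decreasing, convex sequence, and invoke the classical positivity of such cosine series by a double Abel summation---your Fej\'er-kernel identity is exactly that computation written out. The paper's proof is a two-line sketch saying precisely this, and it deduces the corollary from Theorem~\ref{close1th}/\ref{close2th} in the same way you do.

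The one point you flag as ``the main obstacle'' is in fact a non-issue, and the paper simply does not mention it. In the proof of Theorem~\ref{close1th} the Frobenius matrices are $F(p_{\{\xi\}})$ with $\{\xi\}$ the fractional part, so the sign condition $\Re\mathcal Z(e^{2\pi it},\xi)>0$ is only ever used for $\xi\in[0,1)$; for such $\xi$ every sample point $\xi+n/\alpha$ with $n\ge0$ lies in $\mathbb R_+$, where $m_0=\hat g$ and your convexity estimate applies verbatim. So there is no need to invoke quasi-periodicity of the Zak transform or to worry about negative $\xi$---the ``fundamental interval'' you were looking for is just $[0,1)$, and it comes directly from the structure of the proof of Theorem~\ref{close1th}.
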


\subsection{Large densities} Previous theorems deal with the Gabor frames generated by rational functions. 
On the other hand it is known that there exists non-frame rational Gabor systems, in particular, if $g(t)=-\overline{g(-t)}$ and $g(t)=O(t^{-2})$, $t\rightarrow\infty$, the  system $\gG(g;\alpha,\beta)$ does not constitute a frame
in $L^2(\mathbb{R})$ for $\alpha\beta=\frac{n-1}{n}, n=2,3,...$ see \cite{LyubNes}. 
Note that in these cases the density of the lattice giving a non-frame Gabor system is a most $2$. It is known that for
an arbitrary (Wiener) window $g$ the system $\gG(g;\alpha,\beta)$ 
is a frame for $L^2(\mathbb{R})$ if the density of lattice $\alpha\mathbb{Z}\times\beta\mathbb{Z}$ exceeds some critical one
(depending on $g$ and $\beta$ of course). A lot of efforts have been spent in order to determine this critical density, see e.g. \cite{BC,Daub,RonShen}.
Our approach allows us to construct non-frame rational Gabor systems $\gG(g;\alpha,\beta)$ with lattices of arbitrary large density (see Proposition \ref{nfprop}). This situation becomes different if we restrict the number of summands in \eqref{eq:0_1}.

\begin{theorem}
Let $g$ be of the form \eqref{eq:0_1}, $\Re w_k\neq\Re w_l$, $k\neq l$. Then
$$\biggl{\{}(\alpha,\beta): \alpha\beta\leq\frac{1}{N}\biggr{\}}\subset\mathcal{F}_g.$$
\label{sdth}
\end{theorem}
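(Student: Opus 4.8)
The plan is to pass, through the Zak transform, to a statement about zeros of a trigonometric polynomial whose degree is governed by the number $N$ of poles of $g$, and then to exploit the oversampling $\alpha\beta\le 1/N$. The upper (Bessel) bound is free: for windows of the form \eqref{eq:0_1} one has $g(t)=O(|t|^{-1})$ and $\widehat g$ is piecewise exponential with exponential decay at $\pm\infty$, hence $\widehat g\in W(C,\ell^1)$ and $\gG(g;\alpha,\beta)$ is Bessel for every $(\alpha,\beta)$. So the whole issue is the uniform lower bound, which I would first establish when $\alpha\beta=p/q$ with $\gcd(p,q)=1$ and $q\ge Np$. For such lattices one has the Zak transform (Zibulski--Zeevi) criterion: $\gG(g;\alpha,\beta)$ is a frame iff the $p\times q$ matrix $\Phi(x,\omega)$ with entries $Z_\alpha g\!\left(x-\tfrac lq,\,\omega-\tfrac mp\right)$, $0\le m<p$, $0\le l<q$, has rank $p$ with bounds uniform in $(x,\omega)$.

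The key computation is a closed form for $Z_\alpha g$. Putting $c_k:=iw_k/\alpha$ and using the classical identity $\sum_{n\in\mathbb Z}e^{2\pi i n\omega}/(n+a)=\pi e^{\pi i a}e^{-2\pi i a\omega}/\sin\pi a$ for $0<\omega<1$, one gets
\[
Z_\alpha g(x,\omega)=\kappa(x,\omega)\sum_{k=1}^{N}\frac{a_k\,E_k(\omega)}{\sin\pi(x-c_k)},
\]
with $\kappa$ and all $E_k$ nowhere vanishing. Since $g\in\Lt$ forces $\Re w_k\ne 0$, the points $c_k$ are non-real, so $\sin\pi(x-c_k)$ is bounded and bounded away from $0$ on $\mathbb R$ (this also re-proves the upper bound), and since $\Re w_k\ne\Re w_j$ we get $c_k-c_j\notin\mathbb Z$. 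Multiplying $Z_\alpha g$ by $\prod_k\sin\pi(x-c_k)$ and by a nowhere-zero exponential, it becomes $F(x,\omega)=\sum_k a_k E_k(\omega)\prod_{m\ne k}\sin\pi(x-c_m)$, a combination of the $N$ linearly independent functions $\prod_{m\ne k}\sin\pi(x-c_m)$; a further exponential twist makes $e^{\pi i(N-1)x}F(\cdot,\omega)$ a polynomial in $e^{2\pi i x}$ of degree $\le N-1$, while $F(c_1,\omega)\ne 0$ since $a_1\ne 0$ and $c_1-c_m\notin\mathbb Z$.

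Now the degree count finishes the rational case. If $\Phi(x_0,\omega_0)$ were rank deficient, a nontrivial combination $\psi:=\sum_{m=0}^{p-1}\lambda_m Z_\alpha g(\cdot,\omega_0-\tfrac mp)$ would vanish at the $q$ points $x_0-\tfrac lq$, $l=0,\dots,q-1$, all lying in one fundamental cell. Clearing the $N$ sine denominators and an exponential (the $p$ shifted frequencies now combined in), $\psi$ becomes, up to a nowhere-zero factor, a polynomial in $z=e^{2\pi i y/p}$ of degree $\le Np-1$, while those $q\ge Np$ sample points give $q$ distinct values of $z$; hence $\psi\equiv 0$. For $p=1$ this contradicts $F(c_1,\omega_0)\ne 0$. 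For $p\ge 2$, the shift $y\mapsto y+1$ acts diagonally on $\mathrm{span}\{Z_\alpha g(\cdot,\omega_0-\tfrac mp)\}_m$ with the distinct eigenvalues $e^{-2\pi i(\omega_0-m/p)}$, so $\psi\equiv 0$ forces all $\lambda_m=0$ (each $Z_\alpha g(\cdot,\omega')$ is nonzero, blowing up near $c_1$) — again a contradiction. So $\Phi$ has full rank throughout; uniform bounds follow from a routine compactness argument, and $\gG(g;\alpha,\beta)$ is a frame for all rational $\alpha\beta\le 1/N$.

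It remains to fill in the rest of the region. The values $\alpha\beta=1/(Nm)$, $m\in\mathbb N$, are immediate from $\gG(g;m\alpha,\beta)\subset\gG(g;\alpha,\beta)$ together with Bessel. The delicate point — and the one I expect to be the main obstacle — is irrational $\alpha\beta<1/N$: here I would either show that the lower bounds produced above are \emph{uniform} as $p/q\to\alpha\beta$ along rationals $\le 1/N$ (so that the frame operators, which converge in the weak operator topology on a dense set of $f$, retain a common lower bound), or re-run the interpolation/dynamical-systems argument behind Theorem~\ref{irrational} and observe that the degree bound ``$\le N-1$'' obtained here is exactly what allows one to drop the hypothesis $m_0\ne 0$ once the density exceeds $N$. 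The remaining technical step is the exponential bookkeeping that pins the polynomial degree at $N-1$ rather than $N$ — which is precisely what makes the threshold the sharp $1/N$.
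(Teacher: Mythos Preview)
Your rational-density argument via the Zibulski--Zeevi matrix is sound in outline and genuinely different from the paper's route: you compute $Z_\alpha g$ explicitly as a ratio whose numerator is a trigonometric polynomial of degree $\le N-1$ in $e^{2\pi i x}$ (the common denominator $\prod_k\sin\pi(x-c_k)$ is nowhere zero since $\Re w_k\ne 0$), and then a rank drop of the $p\times q$ matrix forces a nonzero polynomial of that degree to vanish at $q\ge Np\ge N$ distinct points of the unit circle. (Your stated degree bound $Np-1$ is an overcount --- the $p$ frequency shifts affect only the coefficients, not the $x$-degree --- but the conclusion survives.) For each fixed $p/q$ the lower frame bound then follows by continuity of $\Phi$ on the torus.

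The gap is the irrational case, and neither of your two proposed fixes closes it. The limiting argument fails because the Zibulski--Zeevi matrix has size $p\times q$ with $q\to\infty$ as $p/q\to\alpha\beta\notin\Q$; there is no mechanism in your proof that controls the smallest singular value uniformly in $q$, and weak convergence of frame operators does not transfer lower bounds without exactly such a uniform estimate. The second suggestion --- re-running the argument of Theorem~\ref{irrational} --- cannot work as stated either: that proof uses nonvanishing of $m_0$ in an essential way (it is what makes the upper-triangular block $X$ in Section~\ref{irrpr} invertible and allows one to ``shift'' along the orbit), and you have no such hypothesis here.

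The paper sidesteps the rational/irrational dichotomy altogether. Working through the criterion of Theorem~\ref{mainth}, which is valid for every $\alpha\in(0,1]$, the condition $\alpha\le 1/N$ guarantees that each interval $[k/\alpha,(k+1)/\alpha)$ contains at least $N$ integers, so the infinite matrix $L_\theta$ splits (after discarding surplus rows) into $N\times N$ blocks $B(\theta',N)=\bigl(m_s(\theta'-r)\bigr)_{0\le r,s\le N-1}$. A short computation factors $B=XY$ with $X$ a Vandermonde matrix in $e^{-2\pi w_k}$ and $Y$ a symmetric-function matrix in $e^{2\pi w_k/\alpha}$; both are nondegenerate precisely because $\Re w_k\ne\Re w_l$. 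Since $\det B(\theta',N)$ is a nowhere-vanishing exponential polynomial on a bounded $\theta'$-range, the lower bound is automatically uniform, and no separate treatment of irrational $\alpha$ is needed.
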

This therorem is almost precise: we will see that there exists a window $g$ of the form \eqref{eq:0_1} and $\alpha,\beta>0$
with $\alpha\beta=\frac{1}{N-1}$ (and $\Re w_k\neq\Re w_l$, $k\neq l$) such that the corresponding Gabor system $\gG(g;\alpha,\beta)$ does not constitute a frame in $L^2(\mathbb{R})$ (see Proposition \ref{nfprop}). Thus,
 for the rational window one can explicitly find the lower bound for the density which guarantees 
the frame property of the corresponding Gabor system. This gives a partial answer to a question formulated in \cite{Daub}.

{
\subsection{Concluding Remarks}

\subsubsection*{Infinite number of poles} We are able to generalize Theorem \ref{posth} to class of Herglotz functions
with infinite number of poles.
\begin{theorem}
Let $w_k > 0$ be an increasing separated sequence, $w_{k+1}-w_k \ge 1$ and also $0 < a_k < 2^{-2^{2^{w_k}}}$. Then for all $\alpha, \beta$ with $0 < \alpha \beta \le 1$ the system generated by the function
$$g(x) = \sum_{k = 1}^\infty \frac{a_k}{x-iw_k}$$ is a frame.
\label{infth}
\end{theorem}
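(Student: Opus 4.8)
The plan is to reduce the infinite-pole case to the finite-pole Theorem \ref{posth} by a perturbation argument, exploiting the extremely rapid decay of the coefficients $a_k$. Write $g = g_K + r_K$ where $g_K(x) = \sum_{k=1}^K \frac{a_k}{x-iw_k}$ is a genuine Herglotz window (with $K$ poles, $a_k > 0$, $w_k > 0$) and $r_K(x) = \sum_{k>K} \frac{a_k}{x-iw_k}$ is the tail. By Theorem \ref{posth}, $\gG(g_K;\alpha,\beta)$ is a frame for every $(\alpha,\beta)$ with $\alpha\beta\le 1$, with some lower frame bound $A_K > 0$. The classical perturbation principle for Gabor frames (e.g. via the Walnut representation, or the $\ell^1$-type bound $\sum_{n\in\Z}\sup_{t}|r_K(t-\alpha n)|\cdot \sum_{m}(\ldots)$; see \cite{Gro, Heil}) says that if the perturbation is small enough in the appropriate Wiener-amalgam-type norm relative to $A_K$ and $\beta$, then $\gG(g_K + r_K;\alpha,\beta)$ remains a frame. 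So it suffices to show that for a suitable choice of $K = K(\alpha,\beta)$ the tail $r_K$ has Wiener norm smaller than the threshold determined by $A_K$ and $\beta$.

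The first step I would carry out is to quantify the lower frame bound $A_K$ from Theorem \ref{posth}: I need a lower bound on $A_K$ that does not decay faster than, say, a tower-exponential in $w_K$ (or in $K$). This is where the bizarre hypothesis $a_k < 2^{-2^{2^{w_k}}}$ is doing its work — the triple tower is chosen precisely so that, no matter how badly $A_K$ degrades as $K$ grows (the proof of Theorem \ref{posth} surely gives only a catastrophically small explicit bound), the tail $\|r_K\|_W \lesssim \sum_{k>K} a_k \lesssim a_{K+1}$ (using $w_{k+1}-w_k\ge 1$ and that $\|\frac{1}{\cdot - iw}\|_W$ grows only polynomially in $1/w$ — actually here $w_k\to\infty$ so each term is uniformly bounded, and the sum is dominated by its first term by the super-fast decay) is still far smaller than any reasonable lower bound for $A_K$. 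Concretely: fix $(\alpha,\beta)$; choose $K$ so large that $A_K$ is controlled, then observe $a_{K+1} < 2^{-2^{2^{w_{K+1}}}}$ which beats any tower expression in $w_{K+1} \ge w_K + 1$ coming from the analysis of $g_K$; a monotonicity/bootstrapping bookkeeping closes the loop.

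The main obstacle will be extracting a \emph{quantitative} lower frame bound from the (presumably non-constructive or very lossy) proof of Theorem \ref{posth}: one must trace through the interpolation-and-dynamical-systems machinery and show $A_K \ge$ (something explicitly bounded below in terms of $w_K$, $\min_k a_k$, $\alpha$, $\beta$), or at least that $A_K$ is bounded below by an expression whose decay is dominated by one of the towers in the hypothesis. A secondary technical point is that $r_K$ need not be rational-with-finitely-many-poles, so one should use a perturbation theorem valid for general Wiener-class (or $S_0$, Feichtinger algebra) windows rather than one tailored to rational functions; checking $r_K$ lies in such a class and estimating its norm by $\sum_{k>K} a_k$ is routine given the separation $w_{k+1}-w_k\ge 1$. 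Finally, since the choice of $K$ depends on $(\alpha,\beta)$, one gets the frame property for each fixed $(\alpha,\beta)$ with $\alpha\beta\le 1$ — which is exactly what the theorem claims — rather than a uniform frame bound over the whole region, and no uniformity is needed.
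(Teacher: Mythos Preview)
Your proposal is correct and follows essentially the same route the paper indicates: the paper states only that ``the proof is based on Theorem~\ref{posth} and some perturbative arguments'' and defers the details to a separate publication, and your decomposition $g=g_K+r_K$ together with a Wiener-amalgam perturbation bound against a quantitative lower frame constant for $g_K$ is precisely that strategy. Your identification of the main obstacle---tracking an explicit lower bound $A_K$ through the Frobenius-matrix argument of Lemma~\ref{Frobenius lemma} so that the triple-tower decay of $a_{K+1}$ beats it---is exactly the point where the work lies, and the remark following Lemma~\ref{Frobenius lemma} (that $c=(1+\mu_1)/2$ suffices with $C$ depending only on $\mu_1$ and $n$) is the natural starting point for that quantification.
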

The proof is based on Theorem \ref{posth} and some perturbative arguments.
The detailed proofs will appear elsewhere. 

\subsubsection*{Two kernels} Using our approach we can describe the frame set $\mathcal{F}_g$ for all functions $g(x)=\frac{a_1}{x-iw_1}+\frac{a_2}{x-iw_2}$, $a_1,a_2,w_1,w_2\in\mathbb{C}$.
In particular, for $w_1,w_2\in\mathbb{R}$ we have $\Pi\subset\mathcal{F}_g$. The detailed proofs will appear elsewhere. 
}

\subsubsection*{Completeness} 
In contrast to the frame property we always have the completeness of rational Gabor systems.
\begin{theorem}
Let function $g(x) = \sum \frac{a_k}{x-iw_k}$ be such that $\Re w_k\ne \Re w_l, k\ne l$. Then the system $\gG(g;\alpha,\beta)$ is complete in $L^2(\R)$ if and only if $\alpha \beta \leq 1$.
\label{compl}
\end{theorem}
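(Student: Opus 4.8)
The implication ``$\gG(g;\alpha,\beta)$ complete $\Rightarrow\alpha\beta\le 1$'' uses nothing about $g$: it is classical that for every $g\in L^2(\R)$ and every $\alpha\beta>1$ the system is incomplete (see, e.g., \cite{Gro,Heil}), so the content of the theorem is the converse. Completeness of $\gG(g;\alpha,\beta)$ is injectivity of the analysis operator $f\mapsto\bigl(\langle f,\pi_{\alpha m,\beta n}g\rangle\bigr)_{m,n\in\Z}$; it is equivalent to completeness of $\gG(\hat g;\beta,\alpha)$ (and $\hat g$ is a finite sum of truncated exponentials with exponents $2\pi w_k$), it depends only on the product $\alpha\beta$ after dilating $g$, and it can only be lost — never gained — by passing to a coarser sublattice. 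I would split the proof into the rational and the irrational case for $\alpha\beta$, exactly as in Theorems \ref{posth}, \ref{irrational} and \ref{sdth}.

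\emph{The case $\alpha\beta=p/q\in\Q$ (lowest terms, so $p\le q$).} Here I would use the Zibulski--Zeevi matrix representation of rational Gabor systems: $\gG(g;\alpha,\beta)$ is complete if and only if a certain $p\times q$ matrix-valued function $\Phi_g(t,\xi)$, whose entries are lattice translates of the Zak transform of $g$ at scale $\alpha$, has rank $p$ for a.e.\ $(t,\xi)$, i.e.\ iff at least one of its $p\times p$ minors is not the zero function. For a rational window the Zak transform, after summing the (conditionally convergent) defining series, is an explicit function which is rational in $z=e^{2\pi i t}$, with simple poles precisely at the points $e^{-2\pi w_k/\alpha}$ — pairwise distinct, and even of pairwise distinct moduli, exactly because $\Re w_k\neq\Re w_l$ — and with residues proportional to $a_k e^{2\pi\xi w_k}\neq 0$; in particular it is never the zero function, which already settles the critical case $p=q=1$. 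For $p<q$ one must see that some $p\times p$ minor of $\Phi_g$ is not identically zero, and here I would exploit the distinctness of the moduli $|e^{-2\pi w_k/\alpha}|$: letting $z$ tend to the pole of extremal modulus isolates a rank-one contribution in every entry, and, after dividing out the common blow-up, the minor reduces to one of the same shape built from the remaining $N-1$ kernels; an induction on $N$, whose base case $N=1$ is a Cauchy/Vandermonde-type determinant that is visibly nonzero, finishes the step.

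\emph{The case $\alpha\beta\notin\Q$.} If, in addition, $m_0(\xi)=\sum_k a_k e^{2\pi\xi w_k}\neq 0$ for all $\xi>0$, then $\gG(g;\alpha,\beta)$ is even a frame by Theorem \ref{irrational}, hence complete. In general $m_0$ is a nonzero exponential polynomial (the $a_k$ are nonzero and the $w_k$ distinct), so its zero set on $(0,\infty)$ is discrete; I would then revisit the proof of Theorem \ref{irrational} and observe that, for a.e.\ value of the dynamical/spectral parameter that occurs there, it already produces an \emph{injectivity} statement needing only $m_0\neq 0$ at that parameter — the stronger hypothesis $m_0(\xi)\neq 0$ for all $\xi$ being invoked solely to upgrade this to a uniform lower frame bound. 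Since $m_0\neq 0$ almost everywhere, injectivity of the analysis operator, i.e.\ completeness, follows.

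The step I expect to be the real obstacle is the rational case with $p<q$: proving that some $p\times p$ minor of the Zibulski--Zeevi matrix does not vanish identically. In the critical case $\alpha\beta=1$ one only needs a single explicit rational function to be nonzero, but for $p<q$ one must control a determinant assembled from many shifted copies of the Zak transform whose only structural virtue is the strict separation $\Re w_k\neq\Re w_l$; turning this separation into the required non-degeneracy — via the induction on $N$ above, or via a direct asymptotic analysis as $z$ runs to the various poles — is the technical heart of the argument. A secondary point to be checked with care is the assertion in the irrational case that the machinery behind Theorem \ref{irrational} really does deliver injectivity under the weakened hypothesis $m_0\not\equiv 0$.
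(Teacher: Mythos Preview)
Your irrational case is exactly what the paper does: revisit the machinery of Theorem \ref{irrational} and observe that for injectivity one only needs the relevant determinants (and $m_0$) to be nonzero almost everywhere, which follows because they are exponential polynomials in $\xi$. Your identification of this point is on target.

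Where you diverge is in the rational case. You switch tools to the Zibulski--Zeevi matrix and propose an induction on $N$ exploiting the pole of extremal modulus, and you flag this as the ``real obstacle''. The paper simply does not make this case split. The matrices $D(\xi,l)$ and the operator $L_\xi$ from Section \ref{Sirr} are defined for every $\alpha\le 1$, rational or not; Lemma \ref{tech} (which nowhere uses irrationality) shows the relevant minor is a nonvanishing exponential polynomial in $\xi$, hence nonzero for a.e.\ $\xi$. That, together with $m_0\neq 0$ a.e., already forces $G(\xi+j/\alpha)=0$ for all $j$ and a.e.\ $\xi$, i.e.\ injectivity of $\mathcal L$, which is completeness. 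Irrationality in Theorem \ref{irrational} is used only to transport \emph{every} $\xi$ into the fixed neighbourhood $[\hat\xi-\delta,\hat\xi+\delta]$ and so obtain a \emph{uniform} lower bound; for completeness one does not need uniformity, and the a.e.\ statement suffices regardless of whether $\alpha\beta$ is rational. So the obstacle you anticipate evaporates: the Zibulski--Zeevi detour and the induction on $N$ are unnecessary.

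Two further points of comparison. The critical line $\alpha\beta=1$ is handled in the paper by noting that the frame operator is unitarily equivalent to multiplication by the Zak transform, which for rational $g$ is analytic and hence a.e.\ nonzero; this is essentially your ``base case $p=q=1$'', so you agree here. For $\alpha\beta>1$ you invoke the general density theorem, while the paper remarks that the same $L_\xi$ picture exhibits an infinite-dimensional kernel directly; both are fine.

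Your route would work, but the paper's is shorter and more uniform: one framework (Theorem \ref{mainth} plus Lemma \ref{tech}) covers all $\alpha\beta<1$ at once, with the single observation ``nonzero analytic function $\Rightarrow$ nonzero a.e.'' replacing both your case split and the Zibulski--Zeevi minor computation.
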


\subsubsection*{Multiple poles} The right hand-side of \eqref{eq:0_1} is the general form of a rational function in $L^2(\mathbb{R})$ without multiple poles.
Our approach is applicable for rational functions with multiple poles. However, to avoid inessential technicalities we prefer to consider only rational functions
with simple poles.

\subsection{The structure of the paper} The article is organized as follows. In Section \ref{FC} we give necessary and sufficient conditions for
rational Gabor system to be a frame. This characterization will be used in Sections \ref{Spos}, \ref{Sirr},  \ref{or}.
  In Section \ref{Spos} we prove Theorem \ref{posth} and highlight connections to dynamical systems. In Section \ref{Sirr} we prove Theorems  \ref{irrational} and \ref{irrth}. Finally, in Section \ref{or} we prove Theorem Theorem \ref{close1th}, Theorem \ref{close2th}, Theorem \ref{sdth}, and construct counterexamples.
 In Section \ref{shiftinv} we discuss the connections with theory of shift-invariant subspaces. 

\smallskip

Throughout this paper, $U(z)\lesssim V(x)$ (equivalently $V (z) \gtrsim U(z)$) means that there exists a constant $C$
such that $U(z) \leq CV (z)$ holds for all $z$ in the set in question, which may be a Hilbert
space, a set of complex numbers, or a suitable index set. We write $U(z) \asymp V (z)$ if both
$U(z)\lesssim V (z)$ and $V (z)\lesssim U(z)$.

\section{Frame Criterion\label{FC}}

In this section we give necessary and sufficient conditions for an arbitrary rational function $g\in L^2(\R)$ to generate a frame for given $\alpha,\beta$. This is the key step in the proofs of Theorems \ref{posth}-\ref{irrth}. 

Let 
\beq
\label{A01}
g(t)=\sum_{k=1}^N \frac {a_k}{t-iw_k}, \ a_k, w_k \in \CC, \ a_k \neq 0, \  \Re w_k \neq 0.
\eeq

This is the general form of a rational function in $L^2(\mathbb{R})$ without multiple poles. We add the factor $i$ in the denominator for the sake of convenience. 

\subsection{Multipliers $m_s$ and the main criterion}

Given $\alpha, \beta >0$, $\alpha \beta \leq 1$,
we study the frame property in $L^2(\R) $ of the Gabor  system  
\beq
\label{A02}
\gG(g;\alpha,\beta)=\{g_{m,n}(t)\}_{m,n\in \Z}, \ g_{m,n}(t)=e^{2\pi i\beta t n} g(t-\alpha m).
 \eeq
   
 It is immediate that the system $\gG(g; \alpha, \beta)$ is a frame if and only if the system $\gG(g_\beta; \alpha\beta, 1)$ is a frame, $g_\beta(t) = g(t/\beta)$. Since $g_\beta$ is also a rational function it sufficient to consider only the case $\beta = 1$, $\alpha \in (0, 1]$ which we will assume from now on.
 
  For $k=1, \ \ldots , \ N$ and $s=0,\ \ldots, \ N-1$ denote
   \beq
 \label{A07}
 A_{k,s} =(-1)^s \sum_{j_1<j_2,...,< j_s, j_l\neq k}
         e^{\frac {2\pi} \alpha (w_{j_1} + \ldots + w_{j_s})},
\eeq   
the  sum is taken over pairwise different $j_l$'s such that $j_l\neq k$. Put
\beq 
\label{A15}
m_s(\xi)= \sum_{k=1}^N a_k A_{k,s} e^{2\pi \xi w_k}.
\eeq

\begin{Thm}
 The following statements are equivalent:
 \begin{enumerate}
 \item $\gG(g;\alpha,1)$ is a frame in $L^2(\R)$,
 \item
  \beq
\label{A19a} 
\int_0^{\frac 1 \alpha }\sum_{n\in \Z} \left | 
                          \sum_{s=0}^{N-1} G\left(\xi+n+\frac s \alpha\right) m_s(\xi) \right |^2 d\xi \asymp ||G||^2_{L^2(\R)}, \   G\in L^2(\R) .
\eeq
 \end{enumerate}
 \label{mainth}
\end{Thm}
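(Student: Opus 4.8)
The plan is to reduce the frame inequality to a statement about a periodized operator on $L^2(\R)$ via the Zak-type transform, and then recognize the multipliers $m_s$ as the coefficients appearing when one expresses the Zak transform of $g$ as a rational function of $z = e^{2\pi i \alpha \xi}$ (or of its reciprocal). First I would fix $\beta = 1$, $\alpha \in (0,1]$, as the excerpt already reduces to this case. The standard starting point is the Ron--Shen / Janssen duality or, equivalently, the fiberization of the Gabor frame operator: after a unitary transform, $\gG(g;\alpha,1)$ being a frame is equivalent to the family of "pre-Gramian" matrices being uniformly bounded above and below. Concretely, one writes, for $f \in L^2(\R)$ and the periodization variable, the sum $\sum_{m,n} |\langle f, g_{m,n}\rangle|^2$ and uses Parseval in $n$ (the modulation index) to collapse the $n$-sum into an integral over a period of length $1$, leaving a quadratic form in the $\alpha$-translates of $\hat f$ (or $f$) against the $\alpha$-translates of $\hat g$.

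The key computational step is to understand $\sum_m g(t-\alpha m) e^{-2\pi i m \alpha \xi}$, i.e. the Zak transform $Zg$ of $g$ at step $\alpha$. For a single Cauchy kernel $\frac{a_k}{t - iw_k}$ this periodization is an explicit elementary function: summing the geometric-type series gives something of the form $\frac{a_k e^{2\pi \xi w_k}\,(\text{elementary in }e^{2\pi i \alpha\xi})}{1 - e^{2\pi w_k/\alpha} e^{2\pi i\alpha\xi}}$ (this is exactly the shape of the function $\mathcal Z(z,\xi)$ introduced before Theorem \ref{close1th}). Summing over $k$ and putting the $N$ fractions over the common denominator $\prod_{k=1}^N (1 - e^{2\pi w_k/\alpha} z)$, the numerator becomes a polynomial in $z$ of degree $N-1$ whose coefficient of $z^s$ is precisely $\sum_k a_k A_{k,s} e^{2\pi\xi w_k} = m_s(\xi)$ — the elementary symmetric functions $A_{k,s}$ in the quantities $e^{2\pi w_j/\alpha}$, $j\neq k$, are exactly what arises from expanding the product of all denominators except the $k$-th. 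This identification is the heart of the matter and is where I expect the main bookkeeping to live: one must check the combinatorial identity relating the partial-fraction numerator to the $A_{k,s}$, and must handle the denominator $\prod_k(1 - e^{2\pi w_k/\alpha} z)$, which (since $\Re w_k \neq 0$, hence $|e^{2\pi w_k/\alpha}| \neq 1$) is bounded above and below on $|z| = 1$ and so can be absorbed into the constants of the $\asymp$.

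With the Zak transform of $g$ written as $m(z,\xi)/\prod_k(1-e^{2\pi w_k/\alpha}z)$ where $m(z,\xi) = \sum_{s=0}^{N-1} m_s(\xi) z^s$, the frame operator fiberizes: grouping the translates $\hat f(\xi + n + s/\alpha)$ against $m_s(\xi)$ — the shift by $s/\alpha$ corresponds to the factor $z^s$ — the frame inequality \eqref{A03} becomes exactly the equivalence \eqref{A19a}, after renaming $\hat f$ (suitably dilated/modulated) as $G$ and noting that the map $f \mapsto G$ is (up to the bounded denominator factor) unitary onto $L^2(\R)$, so that $\|G\|_{L^2(\R)} \asymp \|f\|_{L^2(\R)}$. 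I would carry out the steps in this order: (1) periodize a single kernel and sum the series explicitly; (2) combine over $k$ and identify the numerator coefficients with $m_s(\xi)$ via the symmetric-function identity; (3) bound the common denominator on the unit circle away from $0$ and $\infty$; (4) fiberize the frame operator by Parseval in the modulation index, obtaining the quadratic form on the left of \eqref{A19a}; (5) check that the change of variables $f \leftrightarrow G$ preserves $L^2$ norms up to constants. The main obstacle is step (2) — the precise combinatorial matching of the partial-fraction expansion with the definition \eqref{A07} of $A_{k,s}$, including getting the sign $(-1)^s$ and the "$j_l \neq k$" constraint right — together with making sure in step (3) that no cancellation in the denominator is missed (which is exactly why the hypothesis $\Re w_k \neq 0$, and later $\Re w_k \neq \Re w_l$, enters).
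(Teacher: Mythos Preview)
Your identification of the algebraic heart of the argument is exactly right: putting the partial fractions over the common denominator $\prod_k(1-ze^{2\pi w_k/\alpha})$ produces a numerator whose $z^s$-coefficient is $m_s(\xi)$ (this is the identity \eqref{Zakm} in the paper), and the denominator is bounded above and below on $|z|=1$ since $\Re w_k\neq 0$. So your steps (2), (3), (5) are correct and match the paper.

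The route diverges at your step (4). The paper does \emph{not} proceed via the Zak transform of $g$ or a Ron--Shen fiberization of the frame operator. Instead it works on the dual (synthesis) side: writing $\bigl(\sum_{m,n}|\langle f,g_{m,n}\rangle|^2\bigr)^{1/2}$ as a supremum over coefficient sequences $\{c_{m,n}\}$, the paper encodes each row $\{c_{m,n}\}_m$ into the Paley--Wiener function
\[
h_n(t)=\Bigl(1-e^{2\pi i t/\alpha}\Bigr)\sum_m\frac{c_{m,n}}{t-\alpha m},
\]
and then uses the exact identity
\[
\sum_m\frac{c_{m,n}}{t-(\alpha m+iw_k)}=\frac{P_k(t)}{P(t)}\,h_n(t-iw_k),\qquad P_k(t)=\sum_{s=0}^{N-1}A_{k,s}e^{2\pi i st/\alpha}.
\]
This is the step that converts the infinite sum over $m$ into a \emph{finite} linear combination in $s$; after dividing $f$ by $\overline{P}$ (bounded on $\R$) and applying Parseval, the factor $e^{2\pi i st/\alpha}$ becomes the frequency shift by $s/\alpha$, and the freedom in $\check h_n\in L^2(0,1/\alpha)$ yields exactly \eqref{A19a}.

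Your proposed route---Parseval in the modulation index applied directly to $g$---gives instead a criterion of the form
\[
\int_0^{1/\alpha}\sum_n\Bigl|\sum_{j\in\Z}\hat f(\xi+j/\alpha)\,\overline{\hat g(\xi+j/\alpha-n)}\Bigr|^2 d\xi\asymp\|f\|^2,
\]
with an \emph{infinite} sum over $j$ and the piecewise-exponential function $\hat g$ inside. Turning this into the finite sum $\sum_{s=0}^{N-1}$ with the multipliers $m_s(\xi)$ is precisely the nontrivial reduction, and ``the shift by $s/\alpha$ corresponds to the factor $z^s$'' does not by itself accomplish it: you still have to explain how absorbing the denominator into $f\mapsto G$ collapses the infinite $j$-sum to $N$ terms. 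The paper's $h_n$-trick does this cleanly; if you want to stay on your Zak/fiberization route you would need an analogous device (e.g.\ recognizing that multiplication by $1/P(t)$ in time is a bounded operator whose Fourier conjugate converts the infinite recursion into a finite one), and that argument is missing from the proposal. Also, the ``geometric-type series'' you invoke for the Zak transform of a single Cauchy kernel does not converge absolutely (the kernel is only in $L^2$), so that computation needs more care than you indicate.
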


\subsection{Proof of Theorem \ref{mainth}. Step 1} 

In order to establish the frame property of \eqref{A02} we need to prove
 \beq
 \label{A03} 
 \sum_{m,n} |\langle g_{m,n}, f\rangle|^2 \asymp \| f\| ^2, \quad f\in L^2(\R).
 \eeq

We have 
 \beq
 \label{A04}
\left (  \sum_{m,n} |\langle g_{m,n}, f\rangle|^2  \right )^{\frac 12} =
\sup \left \{\left | \sum_{m,n} c_{m,n}\langle g_{m,n}, f\rangle \right |;  
             \ \sum_{m,n}|c_{m,n}|^2   \leq 1  \right \} .
 \eeq
 
\subsection{ Step 2. \label{st2}}
 Given  ${\bf c}=\{c_{m,n}\} \in l^2(\Z\times\Z)$ we fix $n$ and consider
 \beq
\label{A05}
S_n=  \sum_{m} c_{m,n}\langle g_{m,n}, f\rangle = 
\int_{-\infty}^\infty \overline{f(t)} e^{2\pi i nt} \sum_{k=1}^N a_k 
             \sum_m\frac{c_{m,n}}{t-(\alpha m + i w_k)} dt.
\eeq

Denote
 \begin{equation}
  p_j(t)= 1- e^{i\frac {2\pi}\alpha (t-iw_j)}; \ P(t)=\prod_{j=1}^N p_j(t).  
 \end{equation}
 
 We represent $P(t)p_j(t)^{-1}$ as a trigonometric polynomial
 \beq
 \label{A06}
 P_k(t) =P(t)p_k(t)^{-1}= \sum_{s=0}^{N-1} A_{k,s} e^{i \frac{2\pi}\alpha st}.
 \eeq

 Consider the entire function
 \beq
 \label{A08}
 h_n(t)= \left ( 1 - e^{i \frac{2\pi}\alpha t}  \right ) \sum_m 
        \frac {c_{m,n}} {t- \alpha m}.
\eeq        
 We have 
 \beq
 \label{A09}
 \sum_m 
        \frac {c_{m,n}} {t- (\alpha m+iw_k)} = \frac{P_k(t)}{P(t)} h_n(t-i w_k),
\eeq
  respectively
 \beq 
 \label{A10}        
 S_n = \int_{-\infty}^\infty \frac{\overline{f(t)}}{P(t)} e^{2\pi i n t}
       \sum_{k=1}^N a_kP_k(t)h_n(t-iw_k)dt.
\eeq       

By combining the classical sampling and the Paley-Wiener theorems we have
\beq
\label{A11}
h_n(t)= \int_0^{\frac 1 \alpha} e^{2 \pi  i t \xi} \check h_{n}(\xi) d\xi,
\eeq
and 
\beq
\label{A12}
\check h_n\in L^2(0, 1/\alpha);  \ \| \check h_n\|_{ L^2(0, 1/\alpha)}\asymp
 \|\{c_{m,n}\}_m\|_{l^2(\Z)}.
 \eeq  

Let $g(t)=f(t)/\overline{P(t)}$. Since $ |P(t)| \asymp 1$, $ t\in \R$,  we have $\|f\|\asymp \|g\|$  and also
\beq
\label{A13}
  S_n=\int_{-\infty}^\infty \overline{g(t)} e^{2\pi i n t}
             \sum_{s=0}^{N-1}e^{i \frac {2\pi} \alpha st} M_s(t) dt,
\eeq
where              
 \beq
 \label{A14}
 M_s(t)= \sum_{k=1}^N a_k A_{k,s} h_n(t-iw_k) =
      \int_0^{\frac 1 \alpha}e^{2 \pi i \xi t} \check h_n (\xi)
            \sum_{k=1}^N a_k A_{k,s} e^{2\pi \xi w_k}  d\xi.
\eeq                 

\subsection{Step 3. \label{st3}}      

The Parseval's identity now yields
\beq
\label{A16}
S_n=\int_0^{\frac 1 \alpha} 
             \left [  
\sum_{s=0}^{N-1} \overline{G\left(\xi+n+\frac s \alpha\right)}  m_s(\xi) 
              \right ] \check h _n(\xi) d\xi,
\eeq
 { where
 $G$ is the Fourier transform of $ g$ which satisfies
$
\|G\|_{L^2} =\|g\|_{L^2} \asymp \|f\|_{L^2} .
$
  } 

Finally,
\beq
\label{A17}
\sum_{m,n} |\langle g_{m,n}, f\rangle|^2 \asymp
                \sum_n \left [  \sum_{s=0}^{N-1} \overline{G\left(\xi+n+\frac s \alpha\right)}  m_s(\xi) 
              \right ] \check h _n(\xi) d\xi.
\eeq

\subsection{Step 4.} Observe that the sequence $\{\check h _n\}_{n\in \Z}$ runs through the   whole $l^2(\Z, L^2(0,1/\alpha))$  as $\{c_{m,n}\}$ runs through the whole
$l^2(\Z\times \Z)$ and also  $\|\{\check h _n\}_{n\in \Z}\|_{l^2(\Z, L^2(0,1/\alpha))}\asymp \|\{c_{m,n}\}\|_{l^2(\Z\times \Z)}$.

Set 
\beq
\label {A18}
 \check h_n(\xi) =   \sum_{s=0}^{N-1} G\left(\xi+n+\frac s \alpha\right) \overline {m_s(\xi)}.  
\eeq

We have $\|\{\check h _n\}_{n\in \Z}\|_{l^2(\Z, L^2(0,1/\alpha))}\asymp \|G\|_{L^2(\R)} \asymp\|f\|_{L^2(\R)}$ and, hence, the system $\gG(g;\alpha,1)$  is a frame if and only if
\beq
\label{A19} 
\int_0^{\frac 1 \alpha }\sum_{n\in \Z} \left | 
                          \sum_{s=0}^{N-1} G\left (\xi+n+\frac s \alpha \right ) m_s(\xi) \right |^2 d\xi \asymp 1, \   G\in L^2(\R), \ \|G\| \asymp 1.
\eeq
We arrive  to \eqref{A19a}.                     
$ \square$
    
\begin{remark}Since all functions $m_s(\xi)$ are bounded, the upper estimate in $(ii)$ is always true.
\end{remark}

\section{Frame property for Herglotz functions \label{Spos}}

In this section we prove Theorem \ref{posth}.


\subsection{Frobenius matrices} The proof of Theorem \ref{posth} is based on the Lemma \ref{Frobenius lemma} about Frobenius matrices which was communicated to us by Ivan Bochkov. First we  recall the definition of Frobenius matrix.

\begin{definition}
Let $p(z) = z^n + b_{n-1}z^{n-1} + \ldots + b_0$ be a unitary polynomial.  The Frobenius matrix associated with $p$ is the following matrix
\begin{equation}F(p) = 
\begin{pmatrix}
-b_{n-1} & -b_{n-2} & \ldots &  -b_0\\
1 & 0 & \ldots & 0\\
0&1&\ldots&0\\
\vdots&\vdots&\vdots&\vdots\\
0&0&\ldots&0
\end{pmatrix}.
\end{equation}
\end{definition}
We refer the reader to \cite{St} and \cite{MaM} for the detailed presentation
of properties of such matrices. In particular, $p$ is the characteristic polynomial of  $F(p)$.  The next lemma is the key step in the proof of Theorem \ref{posth}, we also think that it is of independent interest.

\begin{lemma}\label{Frobenius lemma}
Let the sequence $\{\mu_k\}_{k=1}^{n+1}$ be such that $1 > \mu_1 > \mu_2 > \ldots > \mu_{n + 1} > 0$. Consider the set $\mathcal{P}$ of all polynomials $p(x) = (x-\lambda_1)(x-\lambda_2)\ldots (x-\lambda_n)$ such that their zeroes interlace with $\mu_k$'s, i.e. $\mu_k \ge \lambda_k \ge \mu_{k+1}$. Then there exist constants $ C > 0, 0 < c < 1$ depending only on the numbers $\mu_k$ such that   for any polynomials $p_1, \ldots, p_m \in \mathcal{P}$ we have  
\begin{equation}
||F(p_1)F(p_2)\ldots F(p_m)|| \le Cc^m.
\end{equation}
\end{lemma}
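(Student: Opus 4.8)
The plan is to reduce the statement to a uniform contraction estimate for a single "transfer-type" operator on the space of polynomials with interlaced zeros, and then iterate. The key observation is that the companion (Frobenius) matrix $F(p)$ for $p(x)=(x-\lambda_1)\cdots(x-\lambda_n)$ acts on $\mathbb{C}^n$ in a way that is conjugate, via the Vandermonde-type evaluation map, to multiplication by $x$ followed by reduction modulo $p$; more to the point, $F(p)$ sends the coefficient vector of a polynomial $q$ of degree $<n$ to the coefficient vector of the remainder of $xq(x)$ modulo $p(x)$. So a product $F(p_1)\cdots F(p_m)$ corresponds to repeatedly multiplying by $x$ and reducing, each time modulo a (possibly different) polynomial $p_i\in\mathcal{P}$. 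The heart of the matter is therefore: multiplication by $x$ modulo such polynomials is \emph{uniformly} contracting in a suitable norm, because all the relevant roots $\lambda_j$ lie in $[\mu_{n+1},\mu_1]\subset(0,1)$, and $x$ on this set has modulus $\le\mu_1<1$.

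First I would fix, once and for all, the finite set of "interval data" $\mu_1>\cdots>\mu_{n+1}$ and work with the compact family $\mathcal{P}$ (it is compact because each $\lambda_k$ ranges over the closed interval $[\mu_{k+1},\mu_k]$). Next I would try to build a common invariant cone or, better, a common norm on $\mathbb{C}^n$ for which every $F(p)$, $p\in\mathcal{P}$, is a strict contraction with a contraction factor bounded away from $1$ uniformly over $\mathcal{P}$. One clean route: for $p\in\mathcal{P}$ the spectrum of $F(p)$ is exactly $\{\lambda_1,\ldots,\lambda_n\}\subset[\mu_{n+1},\mu_1]$, so the spectral radius of $F(p)$ is at most $\mu_1<1$ for \emph{every} $p\in\mathcal{P}$; by continuity of $p\mapsto\|F(p)^k\|$ and compactness of $\mathcal{P}$, for $k$ large enough (depending only on the $\mu$'s) one gets $\sup_{p\in\mathcal{P}}\|F(p)^k\|\le \theta<1$. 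That handles powers of a single matrix but not arbitrary products. To get the mixed products I would instead exhibit a single norm: e.g. pick $\rho$ with $\mu_1<\rho<1$, consider the closed disk $\{|z|\le\rho\}$, and use that every $F(p)$, $p\in\mathcal{P}$, is the companion matrix of a polynomial all of whose roots lie in $\{|z|\le\mu_1\}$; then one can take the norm on $\mathbb{C}^n$ adapted to $\rho$ (for instance, identify $\mathbb{C}^n$ with polynomials of degree $<n$ and use the $H^2$ norm on the disk of radius $\rho$, or the sup-norm on that disk) and verify, using the remainder-after-multiplication-by-$x$ description above, that $\|F(p)q\|\le \rho\,\|q\|$ or at worst $\|F(p)q\|\le \rho\,\|q\| + (\text{boundary correction})$. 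The boundary correction has to be controlled by the fact that $p$ vanishes on the spectrum, which forces the "wrap-around" term to be small in the chosen norm uniformly over $\mathcal{P}$.

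With a uniform single-step bound $\|F(p)v\|\le c\|v\|$ ($c<1$, norm $\|\cdot\|$ possibly non-Euclidean but equivalent to the standard one with constants depending only on the $\mu_k$'s), the product estimate is immediate: $\|F(p_1)\cdots F(p_m)v\|\le c^m\|v\|$, and converting back to the operator norm in the standard Euclidean structure costs only the fixed equivalence constant $C$, giving $\|F(p_1)\cdots F(p_m)\|\le Cc^m$ as claimed.

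I expect the main obstacle to be exactly the construction of the \emph{common} contracting norm. Having a uniform spectral radius bound $\mu_1<1$ is cheap, but a finite product of matrices each with spectral radius $<1$ need not be small (the joint spectral radius can exceed each individual spectral radius), so the compactness-plus-powers argument is not by itself enough; one genuinely needs a norm (or cone) that all $F(p)$ shrink simultaneously. The cleanest way I can see to force this is the polynomial-remainder picture: realize $F(p)$ as "multiply by $x$, then reduce mod $p$" on $\{q:\deg q<n\}$, equip this space with a norm coming from a disk of radius $\rho\in(\mu_1,1)$, and check that reduction mod $p$ (which only changes $q$ by a multiple of $p$, and $p$ has all roots in $|z|\le\mu_1$) is a bounded-below-by-nothing but, crucially, \emph{the whole composition} has norm $\le\rho$ uniformly — this uniformity is where the interlacing hypothesis and compactness of $\mathcal{P}$ are used. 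If the disk-norm computation turns out to be awkward, a fallback is to use a Lyapunov-type argument: find a common quadratic form $Q\succ0$ with $F(p)^{*}QF(p)\preceq c^2 Q$ for all $p\in\mathcal{P}$ simultaneously, whose existence again follows from compactness once one knows the joint spectral radius of the family $\{F(p):p\in\mathcal{P}\}$ is $<1$; and that last fact can be extracted from the remainder description, since multiplication by $x$ on functions holomorphic in $|z|<1$ with the reductions staying inside a fixed finite-dimensional space built from roots in $|z|\le\mu_1$ cannot have joint spectral radius as large as $1$.
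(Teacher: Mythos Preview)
Your framing is right and matches the paper: one realizes $F(p)^T$ (not $F(p)$, incidentally) as $q\mapsto zq\pmod{p}$ on polynomials of degree $<n$, and the goal is a single norm in which every such map is a strict contraction. You also correctly flag that the uniform spectral-radius bound $\le\mu_1$ is not enough by itself, since the joint spectral radius can exceed each individual one.

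The gap is that you never actually build the norm. Your disk-norm suggestion does not close: on $|z|=\rho$ with $\mu_1<\rho<1$, multiplication by $z$ has norm $\rho$, but the reduction step subtracts $q_{n-1}\,p(z)$, and $|q_{n-1}|$ is only controlled by $\rho^{-(n-1)}\|q\|$ via Cauchy estimates while $|p(z)|$ on $|z|=\rho$ is of order $(\rho+\mu_1)^n$; the resulting bound is far from a contraction, and ``compactness of $\mathcal P$'' does nothing to shrink it. Your fallback is circular: the existence of a common Lyapunov quadratic form is \emph{equivalent} to the joint spectral radius being $<1$, which is exactly the content of the lemma; the heuristic you offer for the latter (``cannot have joint spectral radius as large as $1$'') is just a restatement of the claim.

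What the paper does that you are missing has two pieces, and the interlacing hypothesis is used structurally in both, not merely for compactness. First, $F(p)^T$ depends affinely on the coefficients of $p$, so by writing each factor $x-\lambda_k$ as a convex combination of $x-\mu_k$ and $x-\mu_{k+1}$ one reduces to the $n+1$ extreme polynomials $p_l(x)=\prod_{k\ne l}(x-\mu_k)$. Second, the contracting norm is built from evaluation at the \emph{fixed} points $\mu_1,\dots,\mu_{n+1}$: these are $n+1$ functionals on an $n$-dimensional space, so there is a nontrivial relation $\sum a_kq(\mu_k)=0$, and one sets $\|q\|_\mu=\sum_k|a_kq(\mu_k)|$. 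For $r=F(p_l)^Tq$ one has $r(\mu_k)=\mu_kq(\mu_k)$ for all $k\ne l$ (since $p_l(\mu_k)=0$ there), and the value at $\mu_l$ is recovered from the linear relation; a short inequality then yields $\|r\|_\mu\le\mu_1\|q\|_\mu$. This is the missing idea, and without something of this kind your outline does not yet constitute a proof.
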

We do not specify matrix norm here since all norms in finite-dimensional space are equivalent.
We postpone the proof and first obtain Theorem \ref{posth} from Lemma \ref{Frobenius lemma}.

\subsection{Proof of Theorem \ref{posth} Step 1\label{s1p}}
As before we assume $\beta=1$, $\alpha\leq1$ and prove the relation \eqref{A19a}.
We truncate the integral in \eqref{A19a} and prove the stronger estimate

\begin{equation}\label{frame inequality one }
 \intt_0^1\summ_{l\in\mathbb{Z}} \left|\summ_{s = 0}^{N-1} G\left(\xi + l + \frac{s}{\alpha}\right)m_s(\xi)\right|^2d\xi
\gtrsim ||G||_2^2,\quad G\in L^2(\mathbb{R}).
\end{equation}
We remind that the functions $m_s(\xi)$ are determined in \eqref{A15}.



Put $$n = N - 1, \, \mu_k = e^{-2\pi w_k/\alpha}, k=1,2,\ldots n+1.$$  
We have $1 > \mu_1 > \mu_2 > \ldots > \mu_{n+1} > 0$. 
It follows from the positivity of $a_k$'s  that the roots of the polynomial $$p_\xi(z) = \frac{m_0(\xi) + m_1(\xi)z + \ldots + m_n(\xi) z^n}{m_n(\xi)}$$ satisfy the assumptions of the Lemma \ref{Frobenius lemma}.

The estimate \eqref {frame inequality one }  is now equivalent to 

\begin{equation}\label{bound}
 ||LG||_2 \gtrsim||G||_2, \, G\in L^2(\rl),
 \end{equation}
 where the operator $L:L^2(\R) \to L^2(\R)$ is given by the formula 
 $$
 (LG)(\xi) = \summ_{s = 0}^n G\left(\xi + \frac{s}{\alpha}\right)m_s(\{ \xi\}).$$ 
 Here $\{\xi\}$ denotes the fractional part of $\xi$.

\subsection{Step 2} From the definition of $m_s$ we have  
\begin{equation}
m_n(\{\xi\}) = (-1)^{n}e^{\frac{2\pi}{\alpha}(w_1 + \ldots + w_N)}\sum_{k = 1}^N a_ke^{2\pi \{\xi\} w_k - \frac{2\pi}{\alpha}w_k}.
\end{equation}
The absolute value of $m_n$ is bounded from above and from below by some positive constants. 
Without loss of generality we can consider instead of $LG$ the operator $PG$ 
$$(PG)(\xi) = \frac{(LG)(\xi)}{m_n(\{ \xi\})}.$$


It suffices to construct left inverse for $P$, i.e. solve
$$PG = h,\, h\in L^2(\R).$$
We rewrite this equation in the form
\begin{equation}\label{our}
G\left(\xi + \frac{n}{\alpha}\right) = h(\xi) - \summ_{s = 0}^{n-1}\frac{m_s(\{ \xi\})}{m_n(\{\xi\})}G\left(\xi + \frac{s}{\alpha}\right).
\end{equation}

\subsection{Step 3} We transform equation \eqref{our} to a dynamical system.
Consider  the vector-functions $\Gamma,H \in L^2(\R, \CC^n)$: 
 
\begin{equation*}
\Gamma(\xi) = \begin{pmatrix}
G(\xi + \frac{n-1}{\alpha})\\
G(\xi + \frac{n-2}{\alpha})\\
\vdots\\
G(\xi)
\end{pmatrix}, \qquad
H(\xi) = \begin{pmatrix}
h(\xi)\\
0\\
\vdots\\
0
\end{pmatrix}.
\end{equation*}

 In this notation equation \eqref{our} can be rewritten as $$\Gamma\left(\xi + \frac{1}{\alpha}\right) = F\left(p_{\{ \xi\}}\right)\Gamma(\xi) + H(\xi).$$ Iterating this formula we get 

\begin{equation}
\Gamma\left(\xi + \frac{1}{\alpha}\right) = F\left(p_{\{ \xi\}}\right)F\left(p_{\{ \xi - \frac{1}{\alpha}\}}\right)\Gamma\left(\xi - \frac{1}{\alpha}\right) + F\left(p_{\{ \xi\}}\right)H\left(\xi - \frac{1}{\alpha}\right) + H(\xi).
\end{equation}

and

$$
\Gamma\left(\xi + \frac{1}{\alpha}\right) = F\left(p_{\{ \xi\}}\right)F\left(p_{\{ \xi - \frac{1}{\alpha}\}}\right)\ldots F\left(p_{\{ \xi - \frac{k}{\alpha}\}}\right)\Gamma\left(\xi - \frac{k}{\alpha}\right) +$$ $$ \summ_{s = 0}^k F\left(p_{\{ \xi\}}\right)F\left(p_{\{ \xi - \frac{1}{\alpha}\}}\right)\ldots F\left(p_{\{ \xi - \frac{s-1}{\alpha}\}}\right)H\left(\xi - \frac{s}{\alpha}\right).
$$

By Lemma \ref{Frobenius lemma} the coefficients in front of $\Gamma(\xi - \frac{k}{\alpha})$ and $H(\xi - \frac{s}{\alpha})$ decay exponentially. Therefore, we can pass to the limit and get 
$$\Gamma\left(\xi + \frac{1}{\alpha}\right) =  \summ_{s = 0}^\infty F\left(p_{\{ \xi\}}\right)F\left(p_{\{ \xi - \frac{1}{\alpha}\}}\right)\ldots F\left(p_{\{ \xi - \frac{s-1}{\alpha}\}}\right)H\left(\xi - \frac{s}{\alpha}\right).$$
Thus, we get $||\Gamma||_2 \le C||H||_2 = ||h||_2$. Hence,  $||G||_2 \le C'||h||_2 = C'||PG||_2$. That is, $||PG||_2 \gtrsim ||G||_2$ and, subsequently, $||LG||_2 \gtrsim ||G||_2$  which is the desired estimate \eqref{bound}.
$\square$


\subsection{Proof of Lemma \ref{Frobenius lemma}. Preliminaries}

We are going to construct a norm $||v||_{*}$ on $\R^n$ such that for all $p\in \mathcal{P}$ we have $||F(p)||_*\le c < 1$. This implies by induction that $||F(p_1)\ldots F(p_m)||_* \le c^m$. Since all norms on the finite-dimensional space are equivalent we get the result. 

\medskip

We will actually construct a norm in which $F(p)^T$ is contractive uniformly for all $p\in \mathcal{P}$. If we are able to do so, then matrices $F(p)$ will be contractive in the dual norm.

\medskip

The proof consists of two steps. In the first step we show that it is enough to consider only polynomials
 $p$ such that  $\lambda_k$'s is either $\mu_k$ or $\mu_{k+1}$ for all $k$. Moreover, we will show that among them we can actually study only those for which all $\lambda_k$ are distinct, that is polynomials  $p_l(x) = \prod_{k\ne l}(x-\mu_k)$, $l = 1, \ldots, n+1$. In the second step we will construct a norm in which all matrices $F(p_l)^T$ are uniformly contractive. 

\medskip

\subsection{ Step 1. Reduction to $n+1$ matrices.} 
Let $p(x) = \prod_{k = 1}^n (x-\lambda_k)$ be an arbitrary polynomial in $\mathcal{P}$. Assume that  $\lambda_k\ne \mu_k, \mu_{k+1}$ for some $k$. Since $\mu_k > \lambda_k > \mu_{k+1}$ we can find positive numbers $a, b\in \R$ with $a+b = 1$ such that $(x-\lambda_k) = a(x-\mu_k) + b(x-\mu_{k+1})$. Denoting $q(x) = \frac{x-\mu_k}{x-\lambda_k}p(x)$ and $r(x) =\frac{x-\mu_{k+1}}{x-\lambda_k}p(x)$ we get $p(x) = aq(x)+br(x)$ and therefore
 $F(p)^T = aF(q)^T + bF(r)^T$.  Repeating this procedure with $q$ and $r$ we can express $F(p)^T$ as a convex combination of matrices of the form $F( \rho)^T$ where $\rho\in \mathcal{P}$ and all its roots are from 
$\{ \mu_1, \ldots , \mu_{n+1}\}$. By the triangle inequality if $F(\rho)^T$ are contractive for all such polynomials $\rho$ then $F(p)^T$ is contractive as well.

\medskip

Now we show that it is enough to consider only the polynomials $p_l$. Let us consider all $2^n$ polynomials $p\in \mathcal{P}$ with $\lambda_k = \mu_k$ or $\lambda_k = \mu_{k+1}$ and denote by $K$ the convex hull of the corresponding matrices $F(p)^T$. Since it is a convex hull of finitely many points it is a polytope. It is well-known that any polytope is a convex hull of its vertices which are exactly the points that can not be written as a convex combination of  other points from this polytope. 

Let $p(x) = \prod_{k=1}^n (x-\lambda_k)$ be such that $\lambda_k = \mu_k$ or $\lambda_k = \mu_{k+1}$ for all $k$ and moreover $\lambda_l = \lambda_{l+1}=\mu_{l+1}$ for some $l$. There exist positive numbers $a, b$ such that $a+b = 1$ and $(x-\mu_{l+1}) = a(x-\mu_l) + b(x-\mu_{l+2})$. Denoting $q(x) = \frac{x-\mu_l}{x-\mu_{l+1}}p(x)$ and $r(x) =\frac{x-\mu_{l+2}}{x-\mu_{l+1}}p(x)$ we get $p(x) = aq(x)+br(x)$ and therefore $F(p)^T = aF(q)^T + bF(r)^T$. On the other hand we have $F(q)^T, F(r)^T\in K$ and both of them are not equal to $F(p)^T$. That is, we decomposed $F(p)^T$ as a convex combination of other points from $K$. Thus, it is not a vertex of $K$.

Therefore the only possible candidates for the vertices of $K$ corresponds to polynomials with distinct roots, that is $p_l$'s. That is, all points from $K$ are convex combinations of $F(p_l)^T$'s and so if $F(p_l)^T$'s are contractive for all $l$  then all  other matrices from $K$ are contractive as well. 


\begin{remark}  \ Instead of appealing to the theory of polytopes one can more carefully decompose $p$ into the sum of two other  
polynomials such that at each step the number of repeated roots decreases and then continue the process until there are none.
\end{remark}

\subsection{Step 2. Construction of the contractive norm for $F(p_l)^T$'s.}
 

Let us  identify $\R^n$ with the space   $\mathcal {P}_{n-1}$ of all  polynomials of degree less than $n$ in a way that 
$\alpha= (\alpha_{n-1}, \ \ldots \ , \alpha_0)^T\in \R^n$ corresponds to the polynomial
$q_\alpha(z)=\alpha_{n-1}z^{n-1} + \ldots + \alpha_0\in \mathcal {P}_{n-1}$. One can see that the action of $F(p)^T$ on the polynomial 
$q\in \mathcal {P}_{n-1}$ 
corresponds to the operation $${q(z)\mapsto zq(z)({\rm mod}  \ p(z))}.$$

For each $l=1,2, \ \ldots \ , n+1$ consider the linear functional which sends the  polynomial $q\in \mathcal {P}_{n-1}$ to $q(\mu_l)$. 
 Since these are $n+1$ linear functionals on the $n$-dimensional vector space there is a linear dependence between them: 
 $$a_1q(\mu_1) + a_2q(\mu_2) + \ldots + a_{n+1}q(\mu_{n+1}) = 0$$ for all polynomials $q\in \mathcal {P}_{n-1}$. 
  Moreover, since the values of $q$ at any $n$ different points uniquely determine $q$ none of $a_k$'s vanishes. 
Put
$$
||q||_\mu = |a_1q(\mu_1)| + \ldots + |a_{n+1}q(\mu_{n+1})|. 
$$ 
 Since $q$ is uniquely determined by $q(\mu_1), \ldots , q(\mu_{n+1})$, this is a norm on $ \mathcal {P}_{n-1}$. We show that $||F(p_l)^T q||_\mu \le \mu_1 ||q||_\mu$ for all $l$. Since $\mu_1 < 1$ this implies the result.
 
 \medskip

Put $r(x) = (F(p_l)^Tq)(x)$. We have 
$$r(\mu_k) = \mu_kq(\mu_k) \text{ for all } k\ne l$$ and $$a_lr(\mu_l)=-\sum_{k\neq l}a_kr(\mu_k).$$ Therefore

\begin{multline}\label{new norm}
||r||_\mu = \summ_{k\ne l} |\mu_k a_kq(\mu_k)| + |a_lr(\mu_l)| = \summ_{k\ne l} |\mu_ka_kq(\mu_k)| + |\summ_{k\ne l}a_k\mu_kq(\mu_k)|=\\
          \mu_1 \left (  \summ_{k\ne l} \left|\frac {\mu_k}{\mu_1 }a_kq(\mu_k)\right| + \left|\summ_{k\ne l}a_k\frac{\mu_k}{\mu_1}q(\mu_k)\right| \right ).
\end{multline}
For $s_k\in [0,1]$, $x_k\in \R, k\ne l$ we have
$$\left|\sum_{k\ne l} s_k x_k\right|-\left|\sum_{k\ne l} x_k\right| \leq \left|\sum_{k\ne l}x_k(1-s_k)\right|\leq \sum_{k\ne l}|x_k|(1-s_k). $$
Hence,
$$
\sum_{k\ne l} |s_k x_k| + \left|\sum_{k\ne l} s_k x_k \right| \leq \sum_{k\ne l} |x_l| + \left|\sum_{k\ne l}  x_k \right|.
$$
Setting $x_k = a_kq(\mu_k)$, $s_k = \frac{\mu_k}{\mu_1}$ we get 
\begin{equation}\label{old norm}
||r||_\mu \le \mu_1 ||q||_\mu,
\end{equation}
as required. 
$\square$


\begin{remark}
It is easy to see from the proof that we can choose $c = \mu_1$. But if two $\mu$'s approach each other then constant $C$ may blow up. But one can check carefully analysing the proof of the above lemma that for say $c = \frac{1+\mu_1}{2}$ we can choose $C$ depending only on $\mu_1$ and $n$.
\end{remark}

\section{Irrational densities \label{Sirr}}

In this section we prove Theorem \ref{irrational} and Theorem \ref{irrth}. The main ingridient of the  proofs is careful analysis 
of the rank of the finite matrices with the rows $(0,..., m_0(\xi),....,m_{N-1}(\xi),0,..,0)$. 

%

\subsection{Preliminaries} Without loss of generality we may assume, as before, $\beta=1$, $\alpha\in(0,1)\setminus\mathbb{Q}$
because the rescaling $g\mapsto g_\beta(t):=g(t\slash\beta)$ as in Section \ref{Spos} leads one just to rescaling of the corresponding functions $m_0(\xi)$, $\hat{g}(\xi)$.


\medskip

Put
$$M(\xi)=(m_0(\xi),m_1(\xi),...,m_{N-1}(\xi)).$$
For  the reader's convenience we write $\mathbb{O}_k$ for zero row of length $k$; when the length is clear from the context we suppress the subscript $k$.

We assume $\alpha>1\slash2$. The (easier) case $\alpha<1\slash2$ can be done similarly.

\medskip

Let $\tau=\frac{1}{\alpha}-1$. For any fixed $\xi\in(1,\frac{1}{\alpha})$ consider the sequence
$$\{\xi,\xi-1,\xi-1+\tau,\xi-1+2\tau,....,\xi-1+k_1\tau\},$$
here $k_1\in\mathbb{N}$ is the first number such that $\xi-1+k_1\tau\in(1,1\slash\alpha)$. We can repeat the procedure
starting from $\xi-1+k_1\tau$ and take the first $k_2$ such that $\xi-2+(k_1+k_2)\tau\in(1,1\slash\alpha)$, and so on. After $l$ steps we obtain the sequence
$$S_{\xi,l}:=\{\xi,\xi-1,\xi-1+\tau,\xi-1+2\tau,....,\xi-1+k_1\tau, \xi-2+k_1\tau,$$ $$\xi-2+(k_1+1)\tau...,\xi-2+(k_1+k_2)\tau),$$
$$...,$$
$$\xi-l+K\tau, \xi-l-1+K\tau\},$$
where $K=k_1+k_2+...+k_l$.
 With any such sequence $S(\xi,l)$ we associate finite $(K+l+1)\times(K+N)$ matrix $D=D(\xi,l)$,
$$D=D(\xi,l)=\begin{pmatrix} M(\xi) & \mathbb{O} \\
M(\xi-1) & \mathbb{O} \\
\mathbb{O}_1 & M(\xi-1+\tau) & \mathbb{O} \\
\mathbb{O}_2 & M(\xi-1+2\tau) & \mathbb{O} \\
& \ldots\\
\mathbb{O}_{k_1-1} & M(\xi-1+(k_1-1)\tau) & \mathbb{O} \\
\mathbb{O}_{k_1} & M(\xi-1+k_1\tau) & \mathbb{O} \\
\mathbb{O}_{k_1} & M(\xi-2+k_1\tau) & \mathbb{O} \\
\mathbb{O}_{k_1+1} & M(\xi-2+(k_1+1)\tau) & \mathbb{O} \\
& \ldots\\
\mathbb{O}_{K} & M(\xi-l+K\tau )  \\
\mathbb{O}_{K} & M(\xi-l-1+K\tau)  \\
\end{pmatrix}.$$
We put attention of the reader  to the (a bit) non-traditional form of representation
of this matrix: the each "column" $ $ consists of strings of various length. In the next section 
we will see how does this matrix appear and also explain its structure in more details.

The next lemma is the key technical step in the proof of Theorem \ref{irrational}.

\begin{lemma}
\label{tech}There exist $\hat{\xi}\in (1,\frac{1}{\alpha})$, $l\in \N$, $l>N$, and $\delta>0$ such that for any $\xi\in[\hat{\xi}-\delta,\hat{\xi}+\delta]$ rank of the matrix $D(\xi,l)$ is $K+N$.

\end{lemma}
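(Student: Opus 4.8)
The idea is to analyze $D(\xi,l)$ as a block matrix whose rows come in consecutive "bands," and to show that a carefully chosen $\xi=\hat\xi$ forces these bands to be linearly independent, the property being stable under small perturbations of $\xi$. First I would study the structure of the rows: each row is a shift of $M(\xi')=(m_0(\xi'),\dots,m_{N-1}(\xi'))$ placed at a prescribed horizontal offset, and the offsets are designed so that consecutive rows overlap in at most $N-1$ columns. Since the maximum possible rank of a $(K+l+1)\times(K+N)$ matrix is $K+N$ (provided $l+1\ge N$, which is guaranteed by $l>N$), it suffices to exhibit $K+N$ rows that are linearly independent; equivalently, to show that the kernel of $D(\xi,l)^T$ — i.e. the space of relations among the columns — is trivial, or dually that the rows span $\CC^{K+N}$.

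The key mechanism is a "triangular sweep": I would order the rows so that reading them from top to bottom, each new row introduces a column (its leftmost nonzero entry $m_0$, or its rightmost one $m_{N-1}$) not covered by the previous rows, as long as the relevant $m_j(\xi')$ does not vanish at the points $\xi'\in S_{\xi,l}$. The functions $m_j(\xi)=\sum_k a_k A_{k,j}e^{2\pi\xi w_k}$ are finite exponential sums in $\xi$ with $\Re w_k$ pairwise distinct, hence each $m_j$ has only isolated zeros on any bounded interval; in particular, $m_0$ and $m_{N-1}$ are not identically zero, and we may pick $\hat\xi\in(1,1/\alpha)$ avoiding all of their (finitely many) zeros in the relevant windows. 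The hypothesis $m_0(\xi)\ne0$ for $\xi>0$ (together with $\Re w_k$ distinct) is exactly what is needed to control the critical entries that cannot be avoided merely by a generic choice; the role of choosing $l>N$ large is to have enough rows — in particular the two "tail" rows $M(\xi-l+K\tau)$ and $M(\xi-l-1+K\tau)$ that sit flush against the right edge — so that the last $N$ columns get covered even though only $N-1$ fresh columns can be gained per step in the interior.

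Concretely, I would argue by a rank count: deleting appropriate rows, one can extract a square $(K+N)\times(K+N)$ submatrix that, after permuting rows into the natural staircase order, is block-triangular with diagonal blocks that are either scalars $m_0(\xi')$, $m_{N-1}(\xi')$, or small Vandermonde-type blocks built from $M(\xi')$ at nearby points; its determinant is then a product (or a non-trivial combination) of values of the $m_j$'s, which is a non-zero real-analytic function of $\xi$, hence nonzero for $\xi$ in a neighborhood $[\hat\xi-\delta,\hat\xi+\delta]$ of a suitably chosen $\hat\xi$. The main obstacle, and the part requiring genuine care rather than bookkeeping, is the bottom of the matrix: the "carry" between consecutive unit steps (the transition from $M(\xi-j+k_j\tau)$ to $M(\xi-j-1+k_j\tau)$, which share the same offset) means two rows can sit in the same band, so one must verify that these paired rows are not proportional — this is where the assumption $\Re w_k\ne\Re w_l$ is used decisively, since it prevents $M(\xi')$ and $M(\xi'')$ from being parallel for $\xi'\ne\xi''$. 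Once the determinant of the extracted submatrix is shown to be a not-identically-zero analytic function of $\xi$, the existence of $\hat\xi$ and $\delta$ is immediate, and the condition $l>N$ is recorded as needed for the dimension count to close.
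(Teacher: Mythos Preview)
Your plan correctly identifies the overall shape of the argument --- extract a square $(K+N)\times(K+N)$ submatrix of $D(\xi,l)$ and show its determinant is a not-identically-zero exponential polynomial in $\xi$ --- but the step where you claim this submatrix is ``block-triangular with diagonal blocks that are scalars $m_0$, $m_{N-1}$, or small Vandermonde-type blocks'' is precisely the content of the lemma, and it does not hold as you describe. The $K+1$ ``first'' rows (one per offset $0,\dots,K$) do form a staircase with $m_0$-pivots, but their rank is only $K+1$; the missing $N-1$ dimensions must come from the ``second'' rows in the doubles, and each of those occupies \emph{exactly the same} band of $N$ columns as its partner first row. No permutation produces a block-triangular matrix: the second rows introduce no new pivot columns, and the determinant is a genuinely entangled exponential polynomial, not a product of block determinants. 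Your observation that a pair of rows at the same offset is non-proportional (which indeed follows from $\Re w_k\neq\Re w_l$) settles $N=2$, but for $N\ge 3$ pairwise non-proportionality is far from sufficient for global full rank --- there are only two rows per offset, never the $N$ you would need for a local ``Vandermonde-type block''.

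The paper proves non-vanishing of this determinant by a different, explicit mechanism. From the definition of $m_s$ one extracts the residue identity $\sum_{s=0}^{N-1}m_s(\xi)u_j^{-s}=a_j\,u_j^{1-N}e^{2\pi\xi w_j}\prod_{l\neq j}(u_j-u_l)$ with $u_j=e^{2\pi w_j/\alpha}$; right-multiplying the square submatrix $F(\xi,l)$ by an explicit Vandermonde-augmented matrix $W$ then turns the determinant into a tractable exponential polynomial in $\xi$. One isolates the coefficient of the dominant frequency $e^{2\pi K w_1\xi}$ by row operations and shows it is nonzero by choosing the retained second rows at widely separated return indices $1\ll Q_2\ll\cdots\ll Q_{N-1}$, so that the remaining sub-determinant has a single dominant diagonal term (using $|u_2|>\cdots>|u_N|$). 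None of these ingredients --- the residue identity, the Vandermonde right-multiplication, or the asymptotic choice of $Q_s$ --- appears in your outline, and without them the assertion that the determinant is not identically zero is unproved. Incidentally, the hypothesis $m_0(\xi)\neq 0$ for $\xi>0$ plays no role in this lemma; it is used only later, in the full-rank argument for the larger matrix $C(\xi,l,t)$.
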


We postpone the proof of this lemma and first deduce Theorem \ref{irrational} from Lemma \ref{tech}.

\subsection{Proof of Theorem \ref{irrth}\label{irrpr}}
We have to establish relation \eqref{A19a}.
The $\lesssim$ part \eqref{A19a} is straightforward since all $m_s(\xi)$ are bounded.
In order to prove opposite inequality it suffices to construct the left inverse $\mathcal{L}^{-1}$
to the operator $\mathcal{L}:L^2(\mathbb{R})\rightarrow\ell^2(L^2(0,1\slash\alpha))$
defined by the relation
\begin{equation}
\mathcal{L}:G\mapsto \biggl{\{}\sum_{s=0}^{N-1}G\left (\xi+n+\frac{s}{\alpha}\right )m_s(\xi)\biggr{\}}_n. \label{loper}
\end{equation}
As before, we restrict ourselves to the (more complicated) case $\alpha>1\slash2$ and denote $\tau=\frac{1}{\alpha}-1$.

Given $\gamma=\{\gamma_n(\xi)\}_{n\in\mathbb{Z}}\in\Im\mathcal{L}$
we have to solve the infinite sequence of equations with respect to $\{G(\xi+n+s\slash\alpha)\}_n$
\begin{equation}
\sum_{s=0}^{N-1}G\biggl{(}\xi+n+\frac{s}{\alpha}\biggr{)}m_s(\xi)=\gamma_n(\xi),\qquad 
n\in\mathbb{Z}, \quad \xi\in\biggl{(}0,\frac{1}{\alpha}\biggr{)}.
\label{GMeq}
\end{equation}

We use notations from the previous section. For $\xi\in(1,1\slash\alpha)$ we will try to choose 
a subsystem of \eqref{GMeq} which can be resolved with respect to variables 
\begin{equation}
\biggl{\{}G\biggl{(}\xi+\frac{j}{\alpha}\biggr{)}\biggr{\}}_{j\in\mathbb{Z}}.
\label{zalpha}
\end{equation}

This leads us to the matrix $D(\xi,l)$. Indeed, we have $\xi\in(1,1\slash\alpha)$, $\xi-1\in(0,\tau)$.
The two equations in \eqref{GMeq} written for $\xi$ with $n=0$ and for $\xi-1$ with $n=1$  contain
{\it the same } selection of variables $G(\xi+s\slash\alpha)$, $s=0,1,...,N-1$.
The coefficients in these equations belong to the strings $M(\xi)$, $M(\xi-1)$. 
We complete these strings by the corresponding number of zeroes and obtain 
the first two rows of the matrix $D(\xi,l)$. The equation in \eqref{GMeq} with $\xi-1+\tau$ and
$n=2$ has the form
$$\sum_{s=0}^{N-1}G\biggl{(}\xi+\frac{s+1}{\alpha}\biggr{)}m_s(\xi-1+\tau)=\gamma_2(\xi-1+\tau).$$
This equation contains the variables $\{G(\xi+s\slash\alpha)\}_{s=1}^N$, its coefficients are the elements of the string
$M(\xi-1+\tau)$. Completing this string by one zero on the left and by the corresponding amount of zeroes on right
we obtain the third row in $D(\xi,l)$.

Repeating this procedure as described in previous section we obtain the whole matrix $D(\xi,l)$.
We remark that the number of unknowns increases by one when we add the equation related
to the shift of the argument by $\tau$ and remains the same, when we add the equation related
to the shift of the argument by $-1$. This will allow us to extract subsystem of \eqref{GMeq} which
contains the same amount of equations and variables to be determined.

Moreover, for each $\xi\in(1,1\slash\alpha)$ we can explicitly write the matrix of the operator
applied to the sequence $\{G(\xi+j\slash\alpha)\}_{j\in\mathbb{Z}}$. This matrix consist of
single and double strings $M(\cdot)$ shifted with respect to each other.


\begin{equation}
L_\xi = \begin{pmatrix}
& \ldots\\
\mathbb{O}_{-k_{-1}} & M(\xi+1-k_{-1}\tau) & \mathbb{O}\\
\mathbb{O}_{-k_{-1}} & M(\xi-k_{-1}\tau) & \mathbb{O}\\
& \ldots\\
\mathbb{O}_{-2} & M(\xi-2\tau) & \mathbb{O}\\
\mathbb{O}_{-1} & M(\xi-\tau) & \mathbb{O}\\
\mathbb{O}_0 & M(\xi) & \mathbb{O} \\
\mathbb{O}_0 & M(\xi-1) & \mathbb{O} \\
\mathbb{O}_{+1} & M(\xi-1+\tau) & \mathbb{O} \\
\mathbb{O}_{+2} & M(\xi-1+2\tau) & \mathbb{O} \\
& \ldots\\
\mathbb{O}_{k_1} & M(\xi-1+k_1\tau) & \mathbb{O} \\
\mathbb{O}_{k_1} & M(\xi-2+k_1\tau) & \mathbb{O} \\
& \ldots\\
\mathbb{O}_{k_1+k_2} & M(\xi-2+(k_1+k_2)\tau )  & \mathbb{O}\\
\mathbb{O}_{k_1+k_2} & M(\xi-3+(k_1+k_2)\tau)  & \mathbb{O}\\
& \ldots
\end{pmatrix},
\label{Lmatrix}
\end{equation}

here $\mathbb{O}_{q}$ indicates the shift of the corresponding string $M(\cdot)$ to the left or to the right
depending on the sign of $q$.

\medskip

Observe that the rows of the operator $L_\xi$ have a similar structure. So, we can start with $\xi-1+k_1\tau$ instead of $\xi$. Similarly, if $\xi\notin (1, \frac{1}{\alpha})$ we can first add to it $r\tau$ for some $r\in \N$ so that $\xi + r\tau \in (1, \frac{1}{\alpha})$
and procede from there.

\medskip

Chose $l$, $\hat{\xi}$ and $\delta$ as in Lemma \ref{tech} and, for each 
$t\in\mathbb{Z}$, denote by $\xi_t$ the point of $t$-th return 
of the original point $\xi$ into yne interval $(1,1\slash\alpha)$:
$$\xi_t=\xi-t+k_1+k_2+...+k_t,$$
this is the first argument of $M$ in the $t$-couple of double rows in \eqref{Lmatrix}. In this notation we have $\xi=\xi_0$.
Since $\alpha\not\in\mathbb{Q}$ the set $\xi_t$ is dense in $(1,1\slash\alpha)$.

By shifting of numeration we may assume that $\xi_0\in[\hat{\xi}-\delta,\hat{\xi}+\delta]$, and also we can
choose $t\in\mathbb{N}$ so that the point $\xi_{-t}=\xi+t-(k_{-1}+...+k_{-t})\in[\hat{\xi}-\delta,\hat{\xi}+\delta]$.
Consider the submatrix of $L_\xi$ located between the rows
$(\mathbb{O}, M(\xi+t-(k_{-1}+k_{-2}+...+k_{-t})\tau),\mathbb{O})$ and $(\mathbb{O}, M(\xi-l+(k_{1}+k_{2}+...+k_{l})\tau),\mathbb{O})$:

%
%
$$C=C(\xi,l,t)=\begin{pmatrix}
 M(\xi+t-(k_{-1}+...+k_{-t})\tau) & \mathbb{O}\\
 M(\xi + t - 1-(k_{-1}+k_{-2}+...+k_{-t})\tau) & \mathbb{O}\\
& \ldots\\
\mathbb{O}_{k_{-1}+...+k_{-t}-2} & M(\xi-2\tau) & \mathbb{O}\\
\mathbb{O}_{k_{-1}+...+k_{-t}-1} & M(\xi-\tau) & \mathbb{O}\\
\mathbb{O}_{k_{-1}+...+k_{-t}} &  D(\xi, l)
\end{pmatrix}.$$

This matrix has $\nu:=N+K+(k_{-1}+k_{-2}+...+k_{-t})$ columns.

\subsection{Rank of the matrix $C(\theta,t,l)$}
We will show that $rank(C(\xi,l,t))=\nu$, that is we have to choose collection of $\nu$ rows of $C(\xi,l,t)$
which span the whole space $\mathbb{R}^\nu$. By Lemma \ref{tech} there is a square non-degenerate
submatrix $E(\xi,l)$ of $D(\xi,l)$ of size $(K+N)\times(K+N)$.
We keep the rows which correspond to $E(\xi,l)$ and eliminates the rest of rows $D(\xi,l)$.
Further we eliminate each second row in the couples of double rows, i.e. the rows which
contain the strings $M(\xi+p-(k_{-1}+...+k_{-p}))$, $p=1,...,t$. The remaining rows form $\nu\times\nu$ matrix of the form

$$\begin{pmatrix}
M(\xi + t - (k_{-1}+\ldots + k_{-t})\tau)& \mathbb{O}\\
\mathbb{O}_1&M(\xi + t -1- (k_{-1}+\ldots + k_{-t}-1)\tau)& \mathbb{O}\\
& \ldots &\\
\mathbb{O}_{k_{-1}+...+k_{-t}-1} & M(\xi-\tau) & \mathbb{O}\\
\mathbb{O}_{k_{-1}+...+k_{-t}} &  E(\xi, l)
\end{pmatrix}.$$
This is a block-diagonal matrix
$\begin{pmatrix}X&Y\\0& E(\xi,l)\end{pmatrix}$. 
In addition $X$ is an upper-triangular matrix, its diagonal elements are values $m_0(\xi)$
for some point $\xi\in(\tau,1+\tau)$. They do not vanish and bounded away from zero by the assumption regarding $m_0$.
Thus the matrix $C(\xi,l)$ indeed has full rank.

\subsection{End of the proof of Theorem \ref{irrth}} We can now find $\overrightarrow{G}=\{G(\xi+j\slash\alpha)\}_{j=-k_1-...-k_t}^{K+N}$
which solves the equation of system $\eqref{GMeq}$ which correspond to the selected rows of the matrix $C(\xi,l)$.
The equation of this system which correspond to the rest of the rows of $C(\xi,l)$ will be met automatically
since we assume $\gamma\in\Im L_\xi$. In addition we have
\begin{equation}
\|\overrightarrow{G}\|_2\geq C\|\tilde{\gamma}\|,
\label{GT}
\end{equation}
where $\tilde{\gamma}$ is the section of the sequence $\gamma$ corresponding
the rows of $C(\xi,l)$. The constant $C$ depends on $t$ and the estimate from below
for $|\det E(\xi,l)|$.

Firstly, we observe that number $t$ is uniformly bounded with respect to $\theta$. Indeed, we have an irrational motion with step $\tau$ and 
it's well known that it lands into any given interval in the bounded number of steps regardless of the starting point. 

\medskip

One can choose $\varepsilon>0$ so that, for each $\xi\in[\xi_0-\delta,\xi_0+\delta]$ we have $|\det E(\xi,l)|>\varepsilon$
for the corresponding submatrix $E(\xi,l)$ of $D(\xi,l)$. Therefore the constant $C$ in \eqref{GT} can be chosen uniformly on $\xi\in[\xi_0-\delta,\xi_0+\delta]$.

\medskip

It remains to note that the operator $L_\xi$ can be decomposed into the operators $C(\xi)$ with finite overlapping. 
So, finally we have constructed the bounded left inverse to \eqref{loper}.

\begin{remark}
The proof of Theorem \ref{irrational} can be roughly decomposed into the following ideas: we can use nonvanishing of the function $m_0$ to shift the attention from the number $\xi$ to the number $\xi - \tau \mod \frac{1}{\alpha}$. 
Since $\tau = \frac{1}{\alpha}-1$ and $\frac{1}{\alpha}$ are incommensurable in this way we can come close to any given point on the interval $[0, 1\slash\alpha]$. Thus, it is enough to prove the corresponding bound for any single $\xi_0\in (0, 1\slash\alpha)$ (and its small vicinity), which is done
 in Lemma \ref{tech}, below.
\end{remark}

\subsection{Proof of Lemma \ref{tech}. Step 1} First we observe the identity 
\begin{equation}
\frac{\sum_{s=0}^{N-1}m_s(\xi)z^s}{\prod_{k=1}^N(1-ze^{\frac{2\pi}{\alpha} w_k})}=\sum_{k=1}^N\frac{a_ke^{2\pi\xi w_k}}{1-ze^{\frac{2\pi}{\alpha} w_k}},
\label{Zakm}
\end{equation}
this follows from the definition of the functions $m_s$, see \eqref{A15}. Let
$$u_j=e^{\frac{2\pi}{\alpha}w_j}, j=1,...N.$$
Fix $j\in\{1,...,N\}$ and compare the residue at $z=u^{-1}_j$ of both sides in \eqref{Zakm}.
\begin{equation}
\sum_{s=0}^{N-1}m_s(\xi)u^{-s}_j=a_ju^{1-N}_je^{2\pi\xi w_j}\prod_{l\neq j}(u_j-u_l).
\label{cyclic}
\end{equation}

Assume that the number $l>N$ is already found. The $(K+l+1)\times(K+N)$ matrix $D(\xi,l)$ is composed from $l+1$ "double" $ $ rows
of the form 
$$(\mathbb{O}_{k_1+...+k_s},M(\xi-s+(k_1+...+k_s)\tau), \mathbb{O}),$$  
$$(\mathbb{O}_{k_1+...+k_s},M(\xi-s-1+(k_1+...+k_s)\tau), \mathbb{O})$$
with "single" $ $ rows in between.

In order to transform it to a square matrix it suffices to eliminate $l-N-1$ rows.
Let us eliminate the second rows in the appropriate number of double rows except the first and last ones.
The remaining second rows are of the form 
$$(\mathbb{O}_{k_1+...+k_s},M(\xi-s-1+(k_1+...+k_s)\tau), \mathbb{O})$$
for $s=Q_1,Q_2,...,Q_{N-1}$ for some $0= Q_1<Q_2<...<Q_{N-1}=l$.
Denote the resulting matrix by $F(\xi,l)$  and let
$$q_j=k_{Q_{j-1}}+...+k_{Q_j}$$
be the "distance" $ $ between the rows with numbers $Q_{j-1}$ and $Q_j$.

We will choose the numbers  $Q_1,...,Q_{N-1}$, and also $\hat{\xi}\in(1,1\slash\alpha)$, $\delta>0$,
so that $\det F(\xi,l)\neq0$, $\xi\in(\hat{\xi}-\delta,\hat{\xi}+\delta)$.

\medskip


 We apply \eqref{cyclic} for each $j=1,2,...,N$:

$$F(\xi,l) \begin{pmatrix} u_j^{K+N-1} \\
u_j^{K+N-2} \\...\\
u_j\\
1
\end{pmatrix}=a_je^{2\pi\xi w_j}\prod_{l\neq j}(u_j-u_l)
\begin{pmatrix}
u_j^{K}\\
u_j^{K}e^{-2\pi w_j}\\
u_j^{K-1}e^{-2\pi w_j}e^{2\pi \tau w_j}\\
u_j^{K-2}e^{-2\pi w_j}e^{2\pi 2\tau w_j}\\
\ldots\\
u_j^{K-q_2}e^{-2\pi Q_2w_j}e^{2\pi q_2\tau w_j}\\
u_j^{K-q_2}e^{-2\pi (Q_2+1)w_j}e^{2\pi q_2\tau w_j}\\
\ldots\\
 e^{-2\pi Q_{N-1}w_j}e^{2\pi K\tau w_j}\\
e^{-2\pi (Q_{N-1}+1)w_j}e^{2\pi K\tau w_j}
\end{pmatrix}$$
$$=a_je^{2\pi\xi w_j}\prod_{l\neq j}(u_j - u_l) V_j(l).
$$
For each $j=1,2,...,N$, $V_j(\xi,l)$ is a column of size $K+N$. We observe that
$V_j(l)$ is independent of $\xi$.

Put

$$W(\xi,l):=\begin{pmatrix}
\begin{matrix} u_1^{K+N-1} & u_2^{K+N-1} & \ldots & u_{N}^{K+N-1}\\ 
u_1^{K+N-2} & u_2^{K+N-2} & \ldots & u_N^{K+N-2} \\...\\
u_1 & u_2 & \ldots & u_N\\
1 & 1 & \ldots & 1
\end{matrix}
 & \begin{matrix} \mathbb{O}_{N\times K} \\ \mathbb{I}_K\end{matrix}
\end{pmatrix}
$$
here $\mathbb{O}_{N\times K}$ is the zero $N\times K$ matrix and $\mathbb{I}_{K}$ is the identity matrix of the size $K\times K$. 

For any choice of $Q_1,...,Q_{N-1}$ the determinant $d(\xi)=\det(F(\xi,l)W(\xi,l))$ is an exponential
polynomial, i.e. it is a finite sum of the form
$$d(\xi)=\sum_j\alpha_je^{\xi\beta_j}.$$ 
We are going to prove that for an appropriate choice of  $Q_1,...,Q_{N-1}$, this
polynomial does not vanish identically. This would prove Lemma \ref{tech}. 

$$d(\xi)=e^{2\pi (w_1+...+w_N)\xi}\prod_{j=1}^Na_j\prod_{j\neq l}(u_j-u_l)\det (V_1,..., V_N,F_{N+1},..., F_{N+K}),$$
where $F_j$ is a $j$'th column of the matrix $F(\xi,l)$. We observe that $e^{2\pi (w_1+...+w_N)\xi}\prod_{k=1}^Na_j\prod_{j\neq l}(u_j-u_l)\neq 0,$ which follows from the assumption that $\Re w_j \ne \Re w_l$, $j\ne l$. 

\subsection{Step 2}
It remains to choose $Q_1,...,Q_{N-1}$ so that $\det (V_1,..., V_N,F_{N+1},..., F_{N+K})$ is non-zero. Note that this determinant is also an exponential polynomial in $\xi$. 
Therefore, it suffices to find at least one non-zero coefficient.
We assume that $\Re w_1>\Re w_2>...>\Re w_N$ and we are going to choose $Q_1,...,Q_{N-1}$ so that
the term $e^{2\pi K w_1\xi}$ participates in our polynomial with a non-zero
coefficient.
\medskip

We have
$$M(\xi-1)-e^{-2\pi w_1}M(\xi)=(J_0,J_1,...,J_{N-1}),$$
where $J_s$ does not contain the frequency $e^{2\pi\xi w_1}$. 
We do the following transformations which do not change the determinant. 
Each remaining couple of double rows of the matrix $F$ has the form
$$R_s=(\mathbb{O},M(\xi-Q_s+(k_1+...+k_{Q_s})\tau), \mathbb{O}),$$  
$$R'_s=(\mathbb{O},M(\xi-Q_s-1+(k_1+...+k_{Q_s})\tau), \mathbb{O}).$$  
We replace the row $R'_s$ by $T_s=e^{-2\pi w_1}R_s-R'_s$ which is now free
from the terms containing $e^{2\pi \xi w_1}$. Next, we rearrange the rows of $F(\xi, l)$ in such a way that the new rows $T_s$ go after the first row of $F(\xi, l)$. This yields a rearrangment
of the rows of the matrix $(V_1,..., V_N,F_{N+1},..., F_{N+K})$ which after this rearrangment 
acquires a transparent block structure 
$$\begin{pmatrix} X & Y\\ Z & T \end{pmatrix},$$
where $N\times N$ matrix $X$ and $K\times N$ matrix $Z$
are independent on $\xi$, $T$ is a $K\times K$ lower triangular matrix while the $N\times K$ matrix $Y$ does
not contain terms with $e^{2\pi\xi w_1}$.

Since $T$ is lower-triangular the coefficient of $e^{2\pi Kw_1\xi}$ comes from the diagonal elements of the matrix $T$ only. 
The diagonal elements of the matrix $T$ are equal to $m_{N-1}(\xi+...)$ and have  non-zero coefficients in front of $e^{2\pi\xi w_1}$. 
Thus, it remains to prove that $\det X\neq 0$.

\medskip
We have
$$X=\begin{pmatrix}
u_1^K & u_2^K&\ldots& u^K_N\\
0 & u_2^{K}(e^{-2\pi w_2}-e^{-2\pi w_1}) & \ldots & u^K_N(e^{-2\pi w_N}-e^{-2\pi w_1})\\
0 & u_2^{K-q_2}(e^{-2\pi w_2}-e^{-2\pi w_1}) e^{2\pi w_2(q_2\tau - Q_2)}& \ldots & u^{K-q_2}_N(e^{-2\pi w_N}-e^{-2\pi w_1})e^{2\pi w_N(q_2\tau - Q_2)}\\
& & \ldots &\\
0 & (e^{-2\pi w_2}-e^{-2\pi w_1}) e^{2\pi w_2(K\tau - Q_{N-1})}& \ldots & (e^{-2\pi w_N}-e^{-2\pi w_1})e^{2\pi w_N(K\tau - Q_{N-1})}
\end{pmatrix}.$$

Note that since $\Re w_1 \ne \Re w_j, j > 1$ all the factors $e^{-2\pi w_j}-e^{-2\pi w_1}$ are non-zero. We have
$$\det X = u_1^K\prod_{j\neq l}(e^{-2\pi w_j}-u^{-2\pi w_l})\det
\begin{pmatrix}u_2^K &\ldots& u^K_N\\
u_2^{K-q_2} e^{2\pi w_2(q_2\tau - Q_2)} &\ldots& u^{K-q_2}_N e^{2\pi w_N(q_2\tau - Q_2)}\\
&\ldots& \\
 e^{2\pi w_2(K\tau - Q_{N-1})}&\ldots&  e^{2\pi w_N(K\tau - Q_{N-1})}
\end{pmatrix}=$$
$$u_1^K\prod_{j\neq l}(e^{-2\pi w_j}-e^{-2\pi w_l})\det X'.$$

Now we finally choose $Q_s$ in such a way that $1 \ll Q_2 \ll Q_3 \ll \ldots \ll Q_{N-1}$. This implies that $1 \ll q_2 \ll q_3 \ll \ldots \ll q_{N-1}$. Note that since all numbers $(q_2 + \ldots + q_s)\tau - Q_s$ are in $(0, \frac{1}{\alpha})$  corresponding exponents are uniformly bounded from above and from below. It remains to observe that, since $|u_2| > |u_3|> \ldots > |u_N|$, when we expand $\det X'$ as a sum over all permutations the diagonal term containing $u_2^Ku_3^{K-q_2}\ldots$ will dominate everything else and so $\det X'$ is non-zero. The lemma is proved.
$\square$

\subsection{Proof of Theorem \ref{compl}}
Modifying above arguments we can prove that the system $\gG(g;\alpha,\beta)$ is complete.
Moreover, since the function $m_0$ is a priori almost everywhere non-zero and the matrix $D(\xi, l)$ almost always has full rank we do not need the assumptions about $m_0$ and the irrationality of $\alpha \beta$. 
On the other hand looking at the proof of Theorem \ref{irrational} we can see that if $\alpha \beta >1$ then there exists an infinite-dimensional space of function which are orthogonal to all $\gG(g;\alpha,\beta)$.
Finally, if $\alpha \beta = 1$ then the frame operator is unitary equivalent to the multipication by the Zak transform and since in our case Zak transform is analytic and hence almost everywhere 
non-zero we have completeness in this case as well.

\subsection{Rational densities with large denominators}
\begin{remark} One can check that all arguments  from the
proof of Theorem \ref{irrth} remains true if $\alpha=\frac{p}{q}$ is a rational number with sufficiently big denominator $q\geq q(1-\alpha)$. In particular, this means that under the assumptions of Theorem \ref{irrational} there exists at most countable set of exceptional $\alpha$-s (such that $\gG(g;\alpha,1)$ is not a frame) with only one (possible) accumulating point $1$.
\label{remirr}
\end{remark}

Sometimes we are in the situation when there exists at most finite set of exceptional values, see Section \ref{hc}. 

\section{Other results\label{or}}

In this section we prove Theorems \ref{close1th}, \ref{close2th}, \ref{sdth}. In addition, we construct some counterexamples.

\subsection{Near the critical hyperbola (Theorem \ref{close1th})\label{hc}}
The first step in the proof repeat those in the proof of Theorem \ref{posth}. We also use notation introduced
in the proof of Therorem \ref{posth}.
 $$
 (LG)(\xi) = \summ_{s = 0}^{N-1} G\left(\xi + \frac{s}{\alpha}\right)m_s(\{ \xi\}),$$ 
 where $\{ t\}$ denotes the fractional part of $t$. It suffices to prove that (see Section \ref{s1p}, Step 1)
$$\|LG\|_2\gtrsim\|G\|_2,\quad G\in L^2(\mathbb{R}).$$
Now, we follow Step 2 and Step 3 (Sections \ref{st2}, \ref{st3}). It is enough to show that the following product of the Frobenius matrices tends to $0$ faster than 
some geometric progression:
\begin{equation}
\|F(p_{\{\xi\}})F(p_{\{\xi-\frac{1}{\alpha}\}})...F(p_{\{\xi-l\slash\alpha\}})\|\leq Cq^l,\quad \text { for some } q\in(0,1).
\label{gpineq}
\end{equation}
First we note that the spectrum of $F(p_\xi)$ coincides with the zero set of the polynomial
 $$p_\xi(z) = \frac{m_0(\xi) + m_1(\xi)z + \ldots + m_{N-1}(\xi) z^{N-1}}{m_{N-1}(\xi)}.$$ On the other hand, from the assumption of Theorem \ref{close1th} we have
$$\Re \mathcal{Z}(z,\xi)=\Re\frac{m_{N-1}(\xi)p_{\xi}(z)}{\prod_{k=1}^N(1-ze^{2\pi w_k\slash\alpha})}>0$$
and by the argument principle gives us that the number of zeroes of $p_\xi$ (counting with multiplicities) inside the unit disk $\mathbb{D}$ equals to $N$. 
Actually they are located in a smaller disk $\{z:|z|<\rho\}$, where $\rho<1$ is chosen so that 
$\Re\mathcal{Z}(\rho e^{2\pi i t},\xi)>0$,
$\xi\in[0,1]$, $t\in\mathbb{R}$, and $\rho>e^{-2\pi w_k\slash \alpha}$, $k=1,...,N$.

That means that the spectral radius of $F(p_\xi)$ is strictly less than $1$. In particular,
\begin{equation}
\|F^M(p_\xi)\|\leq q < 1 \text{ for sufficently large } M,
\label{srF}
\end{equation}
where $\|\cdot\|$ is an operator norm of matrix.
Moreover, this inequality is {\it uniform} with respect to $\xi\in[0,1]$ (numbers $q$ and $M$ does not depend on $\xi$). 

\medskip

Given this $M$ one can choose $\alpha$ sufficiently close to $1$ so that the matrices 
$F(p_{\{\xi-\frac{k}{\alpha}\}})$ and $F(p_{\{\xi-\frac{j}{\alpha}\}})$ are arbitary close to each other 
if $|k-j|<M$, so for $M\ll l$ the product in \eqref{srF}
can be represented as a product of $l\slash M_0$ uniformly strictly contractive matrices. This completes the proof. $\qed$

\medskip

Combining Remark \ref{remirr} and Theorem \ref{close1th} we get Theorem \ref{close2th}.

\medskip

\begin{remark} The condition $\Re \mathcal{Z}(z,\xi)>0$, $|z|=1$, $\xi\geq0$ can be reformulated as
$$\sum_{n\geq 0}m_0\biggl{(}\xi+\frac{n}{\alpha}\biggr{)}\cos(nt)>0,\quad \xi,t\in\mathbb{R}.$$
\label{r1}
\end{remark}
This form is useful if we want to check the inequality for all rescaled functions $g(\cdot\slash\beta)$ since rescaling of window $g$ 
corresponds to the rescaling of $m_0$.
\begin{proof}
Put $z=e^{i t}$. We have
$$\Re \mathcal{Z}(z,\xi)=\sum_{k=1}^Na_ke^{2\pi\xi w_k}\Re\frac{1}{1-ze^{2\pi w_k\alpha}}=$$
$$\sum_{k=1}^Na_ke^{2\pi\xi w_k}\sum_{n=0}^\infty\Re(z^ne^{2\pi n w_k\alpha})=\sum_{n\geq 0}m_0\biggl{(}\xi+\frac{n}{\alpha}\biggr{)}\cos(nt).$$
\end{proof}

Now, we are in position to prove Corollary \ref{corrconvex}. From the equation $m_0(\xi)=\hat{g}(\xi)$ we conclude that Fourier series
$$\sum_{n\geq 0}m_0\biggl{(}\xi+\frac{n}{\alpha}\biggr{)}\cos(nt)$$
has positive, convex coefficients for any $\xi$. It is known that such the Fouries series are positive, which
can be deduced by applying the Abel transform twice. $\qed$

\subsection{Lattices with large densities} In this section we prove Theorem \ref{sdth}. The proof is similar to one of Theorem \ref{irrational}.
By the standard rescaling we can assume $\beta=1$.
As in Section \ref{irrpr} it suffices to prove that the discrete operator $L_\theta$ defined by \eqref{Lmatrix} satisfies
$$\|L_\theta P_\theta\|\gtrsim\|P_\theta\|.$$

\medskip

Since $\alpha\leq\frac{1}{N}$, the operator $L_\theta$ can be split into the following $N\times N$ blocks (after possible omiting some rows):
$$B(\theta',N):=\begin{pmatrix}M(\theta')\\
M(\theta'-1)\\
...\\
M(\theta'-N)
\end{pmatrix}=\begin{pmatrix}m_0(\theta') & m_1(\theta') &.... &m_{N-1}(\theta')\\
m_0(\theta'-1) & m_1(\theta'-1) &.... &m_{N-1}(\theta'-1)\\
...\\
m_0(\theta'-(N-1))& m_1(\theta'-(N-1))&....&m_{N-1}(\theta'-(N-1))
\end{pmatrix},$$
where $\theta' = \theta + \frac{k}{\alpha} - n$ is such that $\theta' \ge N - 1$.

So,  Theorem \ref{sdth} follows from  
\begin{lemma}
Let $a_k\neq 0$, $k=1,...,N$ and $\Re w_k\neq\Re w_l$, $k\neq l$. Then
$$\det B(\theta,N)\neq0.$$
\end{lemma}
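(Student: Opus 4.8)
The plan is to reduce the determinant of the $N\times N$ matrix $B(\theta,N)$ to a Vandermonde-type determinant in the variables $u_k = e^{2\pi w_k/\alpha}$, exploiting the same cyclic identity \eqref{cyclic} that was used in the proof of Lemma \ref{tech}. Recall that \eqref{cyclic} states
$$\sum_{s=0}^{N-1}m_s(\xi)u^{-s}_j=a_ju^{1-N}_je^{2\pi\xi w_j}\prod_{l\neq j}(u_j-u_l),\qquad j=1,\dots,N.$$
In matrix form this says that the vector $(u_j^{N-1},u_j^{N-2},\dots,u_j,1)^T$ is mapped by $B(\theta,N)$ to the vector with entries $a_j u_j^{1-N} e^{2\pi(\theta-r)w_j}\prod_{l\neq j}(u_j-u_l)$ as $r$ runs over $0,1,\dots,N-1$ (these are the row-shifts appearing in the successive rows $M(\theta-r)$ of $B$). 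In other words, if $U$ denotes the $N\times N$ Vandermonde matrix with columns $(u_j^{N-1},\dots,1)^T$, then
$$B(\theta,N)\,U = \Big(a_j u_j^{1-N}\prod_{l\neq j}(u_j-u_l)\Big)_{j}\cdot\big(e^{2\pi(\theta-r)w_j}\big)_{r,j},$$
where the right-hand side is a diagonal matrix (in $j$) times the matrix $\big(e^{-2\pi r w_j}\big)_{r=0,\dots,N-1;\,j=1,\dots,N}$, which is itself Vandermonde in the quantities $v_j:=e^{-2\pi w_j}$.

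Taking determinants, $\det B(\theta,N)\cdot\det U$ equals a nonzero scalar times $\prod_j a_j u_j^{1-N}\prod_{l\neq j}(u_j-u_l)$ times $\det\big(v_j^{r}\big)_{r,j} = \prod_{j<l}(v_l-v_j)$, and a global factor $\prod_j e^{2\pi\theta w_j}$ (which never vanishes). So it remains to check that none of these factors vanishes. The factor $\det U = \prod_{j<l}(u_l-u_j)$ is nonzero precisely because the $u_j$ are distinct, and likewise $\prod_{j<l}(v_l-v_j)\neq0$; both follow from the hypothesis $\Re w_k\neq\Re w_l$ for $k\neq l$, which forces $|u_k|\neq|u_l|$ and $|v_k|\neq|v_l|$ (in particular $u_k\neq u_l$ and $v_k\neq v_l$). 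The factor $\prod_j a_j\neq0$ is the hypothesis $a_k\neq0$, and $\prod_{l\neq j}(u_j-u_l)\neq0$ and $u_j^{1-N}\neq0$ again by distinctness. Dividing through by $\det U\neq0$ gives $\det B(\theta,N)\neq0$.

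The only genuinely delicate point is bookkeeping: one must verify that the row of $B(\theta,N)$ indexed by $r$ really produces, under the cyclic identity, the value $e^{2\pi(\theta-r)w_j}$ up to the $j$-independent normalization, i.e. that the shift structure $M(\theta), M(\theta-1),\dots,M(\theta-(N-1))$ matches the $u_j^{-s}$ weighting in \eqref{cyclic} correctly; this is just \eqref{A15} written out, so it is routine but should be done carefully. I expect no serious obstacle beyond this — the whole lemma is an $N\times N$ finite-dimensional shadow of the infinite-rank computation already carried out for Lemma \ref{tech}, and the nonvanishing is forced entirely by $a_k\neq0$ together with the separation $\Re w_k\neq\Re w_l$.
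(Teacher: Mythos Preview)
Your argument is correct and is essentially the paper's own proof: both reduce $\det B(\theta,N)$ to a product of two Vandermonde determinants (in $u_j=e^{2\pi w_j/\alpha}$ and in $y_j=e^{-2\pi w_j}$) times the nonzero scalars $a_j$, the paper by writing $B=XY$ directly from the formula \eqref{A15} for $m_s$, and you by right-multiplying $B$ by the Vandermonde matrix $U$ using the identity \eqref{cyclic}. One harmless bookkeeping slip: once you take the column $(u_j^{N-1},\dots,1)^T$ rather than $(1,u_j^{-1},\dots,u_j^{1-N})^T$, the factor $u_j^{1-N}$ on the right-hand side should be absorbed, so the diagonal entry is simply $a_j\,e^{2\pi\theta w_j}\prod_{l\neq j}(u_j-u_l)$.
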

\begin{proof}
Put $y_k=e^{-2\pi w_k}$, $u_k=e^{2\pi w_k\slash\alpha}$, $A_k=a_ke^{2\pi\xi w_k}$, $k=1,...,N$. Then
$$B=\begin{pmatrix} A_1 & A_2 &...& A_{N}\\
A_1y_1 & A_2y_2 &...& A_Ny_N\\
&&....&\\
A_1y_1^{N-1} & A_2y_2^{N-1} &...& A_Ny_N^{N-1}\\
\end{pmatrix}
\begin{pmatrix}1 & -\sum_{k\neq 1}u_k &...& (-1)^{N-1}\prod_{k\neq 1}u_k\\
1 & -\sum_{k\neq 2}u_k &...&(-1)^{N-1}\prod_{k\neq 2}u_k\\
& &...&\\
1 & -\sum_{k\neq N}u_k&.. & (-1)^{N-1}\prod_{k\neq N}u_k
\end{pmatrix}= XY.
$$ 
Entries of the matrix $Y$ are symmetric polynomials with respect to subsets of variables $\{u_1,...,u_N\}$.
We have
$$\det Y = \pm\prod_{k,l,k\neq l}(u_k-u_l)\neq0.$$
On the other hand,
$$\det X=\prod_{k=1}^NA_k\det\begin{pmatrix} 1 & 1 &...& 1\\
y_1 & y_2 &...& y_N\\
&&....&\\
y_1^{N-1} & y_2^{N-1} &...& y_N^{N-1}\\
\end{pmatrix}=\prod_{k=1}^NA_k\det Z.$$
Vandermonde matrix $Z$ has non-zero determinant.
\end{proof}

\begin{proposition} For any $N$ there exists rational window $g$ of degree $N$ such that $\Re w_k\neq\Re w_l$, $k\neq l$ and Gabor system $\gG(g;1\slash(N-1),1)$ is
not a frame.
\label{nfprop}
\end{proposition}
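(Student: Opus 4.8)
The plan is to reverse-engineer the failure of the frame property from the rank criterion developed in Theorem~\ref{mainth} and exploited in the proof of Theorem~\ref{irrational}. Recall that, after the reduction $\beta = 1$, the system $\gG(g;\alpha,1)$ is a frame if and only if the operator $L_\theta$ of \eqref{Lmatrix} is uniformly bounded below; and when $\alpha\beta = 1/(N-1)$, i.e.\ $\frac1\alpha = N-1$ is an integer, the block structure collapses: the shift by $\tau = \frac1\alpha - 1 = N-2$ is itself an integer, so every row $M(\xi - j)$ appears at integer offsets and $L_\theta$ decomposes into finite blocks indexed by $\theta' = \{\theta\} \in [0,1)$, each of size $(N-1)\times N$ of the shape
\[
\widetilde B(\theta') = \begin{pmatrix} M(\theta') \\ M(\theta'-1) \\ \vdots \\ M(\theta'-(N-2)) \end{pmatrix},
\]
an $(N-1)\times N$ matrix. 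Since it has one more column than rows, for the Gabor system to fail to be a frame it suffices to produce a window $g$ of the form \eqref{eq:0_1} with $\Re w_k \neq \Re w_l$ for $k\neq l$ for which this rectangular matrix drops rank (to $\le N-2$) on a set of $\theta'$ of positive measure — equivalently, the infinite operator $L_\theta$ has a nontrivial kernel direction that can be populated into an honest $L^2$ function, or at the very least fails the lower bound.

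First I would pin down exactly which algebraic condition on $(a_k, w_k)$ forces $\mathrm{rank}\,\widetilde B(\theta') < N-1$. Using the factorization already exploited in the proof of the lemma in Section~\ref{sdth}, namely $M(\theta') = (A_1(\theta'),\dots,A_N(\theta'))\,Y$ where $A_k(\theta') = a_k e^{2\pi\theta' w_k}$ and $Y$ is the fixed invertible matrix with entries the elementary symmetric functions of $\{u_j\}_{j\ne k}$, $u_j = e^{2\pi w_j/\alpha}$, the $(N-1)\times N$ matrix $\widetilde B(\theta')$ equals $X(\theta')\,Y$ where $X(\theta')$ has rows $(A_1 y_1^i,\dots,A_N y_N^i)$, $i = 0,\dots,N-2$, with $y_k = e^{-2\pi w_k}$. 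Since $Y$ is square and invertible, $\mathrm{rank}\,\widetilde B(\theta') = \mathrm{rank}\,X(\theta')$. Now $X(\theta') = \mathrm{diag}(A_1(\theta'),\dots,A_N(\theta'))$ acting on the columns of an $(N-1)\times N$ Vandermonde-type matrix $V$ with nodes $y_1,\dots,y_N$ and only the first $N-1$ power rows. The $(N-1)\times(N-1)$ minors of $V$ are nonzero Vandermonde determinants in any $N-1$ of the $y_k$'s; hence $X(\theta')$ drops rank precisely when a suitable alternating sum of the $A_k(\theta')$, weighted by those minors, vanishes identically in $\theta'$. Writing $A_k(\theta') = a_k e^{2\pi\theta' w_k}$, the vanishing of this expression for \emph{all} $\theta'$ in an interval is a genuine linear relation among the exponentials $e^{2\pi\theta' w_k}$; when the $w_k$ are chosen with distinct real parts the exponentials are linearly independent, so this cannot happen. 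The resolution is to choose some $w_k$ with \emph{equal} real parts but distinct imaginary parts — still allowed by \eqref{eq:0_1}, and still allowed by the hypothesis of the Proposition, which only demands $\Re w_k \ne \Re w_l$... so in fact I must be more careful: the constraint $\Re w_k \ne \Re w_l$ is imposed, which rules out the cheapest degeneration. Instead, the kernel direction need not come from identical vanishing of a minor: it can come from the \emph{infinite} system $L_\theta$ rather than a single block.

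So the actual construction I would carry out is the following. Take $N = 2$ first as a sanity check ($\alpha = 1$, the critical line, where non-frames are classical and the Zak transform vanishes somewhere), then for general $N$ choose $\frac1\alpha = N-1$ and look for a window for which the operator $L_\theta$, which is now a bi-infinite band matrix with the single fixed period, has the multiplier $m_0$ or the associated symbol vanishing on a set of positive measure, or has spectral radius of its transfer matrix equal to $1$. Concretely I would exploit the symmetry class already known to produce non-frames: the paper cites \cite{LyubNes} that for $g(t) = -\overline{g(-t)}$, $g(t) = O(t^{-2})$, the system fails to be a frame exactly at $\alpha\beta = \frac{n-1}{n}$; a rational $g$ of the form \eqref{eq:0_1} with $w_k$ chosen in conjugate-symmetric pairs (distinct real parts within each pair, arranged antisymmetrically) and $a_k$ accordingly realizes such a $g$ with $N$ poles. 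Then one checks, via the frame criterion \eqref{A19a}, that the corresponding operator $L_\theta$ has a nontrivial null space: the point is that the alternating symmetry makes $m_0(\xi)$, evaluated along the relevant arithmetic progression $\xi + \frac{n}{\alpha}$, satisfy a reflection identity forcing the symbol $\sum_{n} m_0(\xi + \frac n\alpha) z^n$ to have a zero on $|z|=1$ for every $\xi$ — exactly the obstruction that, on the critical hyperbola for the rescaled window, kills the lower frame bound.

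The main obstacle I expect is bookkeeping the precise form of the non-frame witness so that the \emph{strict} hypothesis $\Re w_k \ne \Re w_l$ is respected while still producing a rank drop: the naive degeneration uses coincident real parts, which is forbidden, so I must build the kernel vector from the infinite system (where the band structure with period $\tau = N-2$ allows an $\ell^2$ null solution even though each finite block $\widetilde B(\theta')$ has full rank $N-1$). Verifying that this formal null solution is genuinely in $L^2(\R)$ — i.e.\ that the corresponding $G$ decays — and that it violates the lower bound in \eqref{A19a} rather than merely the exact frame equality is the delicate part; I would handle it by an explicit geometric-series summation of the transfer operator $F(p_\xi)$ along the (now periodic) orbit, showing its spectral radius hits $1$ for the chosen $g$, and then invoking that a spectral radius of $1$ (with the right eigen-direction) is incompatible with the uniform lower bound, exactly dually to the argument in the proof of Theorem~\ref{close1th} where spectral radius $<1$ gave the bound. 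This produces the desired window $g$ of degree $N$ with $\Re w_k\ne\Re w_l$ and $\gG(g;1/(N-1),1)$ not a frame, completing the proof of Proposition~\ref{nfprop}.
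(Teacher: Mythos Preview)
Your proposal does not reach a proof, and the paper's argument is both different and much shorter. The key idea you miss is that the coefficients $a_1,\dots,a_N$ are free parameters: one may fix the poles $w_1,\dots,w_N$ arbitrarily (with pairwise distinct real parts) and then \emph{solve for} the $a_k$. When $\alpha = 1/(N-1)$ the bi-infinite operator $L_\theta$ has only $N-1$ essentially distinct rows (up to shift), so applying $L_\theta$ to the constant sequence $I=(\dots,1,1,1,\dots)$ produces at most $N-1$ distinct output values, namely $\sum_{s=0}^{N-1} m_s(\theta+j)$ for $j=0,\dots,N-2$. Requiring these to vanish is a homogeneous linear system of $N-1$ equations in the $N$ unknowns $a_1,\dots,a_N$; a nontrivial solution always exists. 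For that window the constant sequence is formally annihilated, and truncations show $L_\theta$ is not bounded below on $\ell^2$, so $\gG(g;1/(N-1),1)$ is not a frame. That is the entire proof.

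Your route has concrete problems. First, asking the $(N-1)\times N$ block $\widetilde B(\theta')$ to drop rank on a set of positive measure is the wrong target (and, as you notice, impossible when the $\Re w_k$ are distinct); the failure of the lower frame bound does not require any finite block to be rank-deficient, only that the infinite banded operator admit an approximate null sequence --- which the constant vector supplies. Second, your fallback to the \cite{LyubNes} symmetry class produces non-frames at $\alpha\beta=\frac{n-1}{n}$, not at $\alpha\beta=\frac{1}{N-1}$; apart from $N=3$ these densities are disjoint, so that construction cannot establish the proposition for general $N$. Third, the spectral-radius heuristic (``show the transfer matrix has spectral radius $1$'') is left as a plan; even if carried out it would be far heavier than the two-line dimension count above, and spectral radius equal to $1$ does not by itself yield an $\ell^2$ approximate null vector without further structure.
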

\begin{proof}
Let us consider operator $L_\theta$ for some fixed $\theta$. Put $I=(....,1,1,1,1,....)^T$ (all entries of $I$ are equal to $1$).
If $L_\theta I=0$, then $L_\theta$ is not bounded away from zero as an operator on $\ell^2(\mathbb{Z})$ and, hence, $\gG(g;1\slash(N-1),1)$
is not a frame (see the proof of Theorem \ref{irrational}).
\smallskip
But if $\alpha=\frac{1}{N-1}$, then there exist only $N-1$ different rows in $L_\theta$. So, for any $\{w_k\}_{k=1}^N$ we can find a non-trivial sequence $\{a_k\}_{k=1}^N$
such that $LI=0$.
\end{proof}


\subsection{Counterexamples} Using the similar arguments as in the proof of Proposition \ref{nfprop} we can prove the following proposition.
\begin{proposition} For any rational number $\alpha$ there exists rational window $g$ such that Gabor system $\gG(g;\alpha,1)$
is not a frame.
\end{proposition}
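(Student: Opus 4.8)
The plan is to imitate the proof of Proposition \ref{nfprop}, but now allowing arbitrarily large denominators. Fix a rational number $\alpha=p/q$ with $\gcd(p,q)=1$. As in the proof of Theorem \ref{irrational} (Section \ref{irrpr}), the frame property of $\gG(g;\alpha,1)$ is equivalent to the operator $L_\theta$ defined by \eqref{Lmatrix} being bounded below, uniformly in $\theta$; so it suffices to exhibit a window $g$ of the form \eqref{eq:0_1} and a value of $\theta$ for which $L_\theta$ has a nontrivial kernel in $\ell^2(\mathbb{Z})$, or at least fails to be bounded below. First I would observe that since $\alpha\in\mathbb{Q}$, the orbit $\{\theta-k\tau \bmod 1/\alpha\}_{k}$ is finite: the argument $\xi$ of $M(\cdot)$ in the rows of $L_\theta$ runs through only finitely many distinct values, say $\theta_1,\dots,\theta_r$ with $r\le q$. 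Consequently the infinite matrix $L_\theta$ is built from only finitely many distinct rows, each of the form $(\mathbb{O},M(\theta_i),\mathbb{O})$ with a periodic pattern of shifts.

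The key point is then linear algebra: we have $N$ free parameters $a_1,\dots,a_N$ entering each $m_s(\xi)=\sum_k a_k A_{k,s}e^{2\pi\xi w_k}$ linearly, whereas the candidate kernel vector — I would take $I=(\dots,1,1,1,\dots)^T$ as in Proposition \ref{nfprop}, or more generally a periodic vector $v$ — imposes, through the finitely many distinct row types, only finitely many scalar conditions of the form $\sum_{s=0}^{N-1} m_s(\theta_i)\, v_{(\cdot)}=0$. If the number of distinct such conditions is at most $N-1$, a nonzero solution $\{a_k\}$ exists, giving $L_\theta v=0$. The bookkeeping that makes this work is the same as in Proposition \ref{nfprop}: choosing $N$ to be large compared to $q$ (e.g. $N=q+1$, or whatever count of distinct rows arises), so that there are strictly more parameters $a_k$ than independent constraints. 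One then just needs to check that the resulting $g$ genuinely has the form \eqref{eq:0_1} — i.e. that not all $a_k$ are forced to vanish and the $w_k$ can be chosen with $\Re w_k$ distinct and nonzero — which is immediate since the $w_k$ are free and only the $a_k$ solve a homogeneous linear system.

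A slightly cleaner route, and the one I would actually write up, is to reduce to the earlier counterexamples directly: Proposition \ref{nfprop} already produces, for each $N$, a non-frame rational Gabor system with $\alpha=1/(N-1)$. Combining this with the rescaling $g\mapsto g(\cdot/\beta)$ (which, as noted repeatedly in the paper, preserves rationality of the window and turns $(\alpha,\beta)$-frames into $(\alpha\beta,1)$-frames) one gets non-frame systems for every $\alpha\beta$ of the form $1/(N-1)$. For a general rational $\alpha=p/q<1$ one writes $\alpha=p/q$ and notes that the number of distinct rows of $L_\theta$ depends only on $q$ (and on $\{\,p\tau \bmod 1/\alpha\,\}$ structure); then picking $N$ with $N-1$ at least this number of distinct rows and solving the homogeneous system $\{\sum_s m_s(\theta_i)=0\}_i$ for $\{a_k\}$ yields the desired window. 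For $\alpha\ge 1$ there is nothing to prove since no window gives a frame there.

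The main obstacle is the combinatorial/counting step: one must verify that for the chosen $\alpha=p/q$ the infinite matrix $L_\theta$ really does reduce to at most $N-1$ genuinely distinct scalar equations on $(a_1,\dots,a_N)$ — i.e. that the periodic shift pattern together with the finiteness of the orbit $\{\theta-k\tau\}$ does not secretly generate more than $N-1$ independent conditions — and to confirm that the homogeneous system admitting a nontrivial solution $\{a_k\}$ is compatible with all $a_k\ne 0$ and the non-degeneracy $\Re w_k\ne\Re w_l$. Once the count is pinned down, the rest is routine: a homogeneous linear system with more unknowns than equations has a nonzero solution, and the constructed $g$ together with $v=I$ (or the appropriate periodic vector) witnesses $\|L_\theta v\|=0<\|v\|_{\ell^2}$ failing on finite truncations, hence $\gG(g;\alpha,1)$ is not a frame.
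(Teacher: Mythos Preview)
Your approach is correct and is exactly what the paper intends: the paper's own proof is just the single sentence ``Using the similar arguments as in the proof of Proposition~\ref{nfprop}\ldots'', and you have correctly unpacked this --- for $\alpha=p/q$ the matrix $L_\theta$ has only finitely many (in fact $q$) distinct row types, so taking $N>q$ and solving the resulting underdetermined homogeneous linear system for $\{a_k\}$ produces a window with $L_\theta I=0$. The one caveat is that your ``cleaner route'' via rescaling does not actually shortcut anything (it only hits $\alpha$ of the form $1/(N-1)$), but you correctly revert to the row-counting argument, which is the real proof.
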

On the other hand, if we carefully look at the proof of Theorem \ref{irrational} we can find a different method
of constructing non-frame Gabor systems. In particular, this leads us to a construction of Gabor non-frame systems 
with irrational densities.

\begin{theorem}
Let $f(x) = \sum\limits_{k = 1}^3 \frac{a_k}{x - i w_k}$. Assume that $\alpha > \frac{5}{6}$ and that for some $0.99 < \xi_0 < 1$ we have
\begin{equation}\label{condition}
m_0\biggl{(}\xi_0 - \frac{2}{\alpha} + 2\biggr{)} = m_1\biggl{(}\xi_0 - \frac{1}{\alpha} + 1\biggr{)} = m_2(\xi_0) = 0.
\end{equation}
Then the function $f$ does not give a frame for this $\alpha$.
\end{theorem}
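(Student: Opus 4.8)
The plan is to show that under condition \eqref{condition} the operator $L_{\xi_0}$ (in the notation of \eqref{Lmatrix}, with $N = 3$) has a nontrivial element in its kernel, so that $L_{\xi_0}$ is not bounded below on $\ell^2$; by Theorem \ref{mainth} (via the reformulation through the operators $L_\xi$ used in the proof of Theorem \ref{irrational}) this prevents $\gG(f;\alpha,1)$ from being a frame. First I would locate which rows of $L_{\xi_0}$ actually involve the vanishing coefficients. Since $\alpha > \frac{5}{6}$ we have $\tau = \frac 1\alpha - 1 \in (0, \frac 15)$, and with $0.99 < \xi_0 < 1$ the first return iterates $\xi_0, \xi_0 - 1 + \tau, \dots$ behave in a controlled way; the three points $\xi_0$, $\xi_0 - \frac 1\alpha + 1 = \xi_0 - \tau$, and $\xi_0 - \frac 2\alpha + 2 = \xi_0 - 2\tau$ are exactly the arguments of $M(\cdot)$ appearing in three consecutive single rows of $L_{\xi_0}$ just above the distinguished double rows. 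The point of the condition is that in each of these three rows the $M$-string $(m_0, m_1, m_2)$ has one coordinate equal to zero, and — crucially — these zeros sit in \emph{different} columns: row for $\xi_0$ kills $m_2$, row for $\xi_0 - \tau$ kills $m_1$, row for $\xi_0 - 2\tau$ kills $m_0$. Because these three rows are shifted relative to one another by one coordinate, the zeros line up along a column, i.e. there is a single column of $L_{\xi_0}$ which, restricted to these three rows, is identically zero.

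Next I would make this precise: the three rows in question, written with their shifts, read schematically
\begin{equation*}
\begin{pmatrix}
\cdots & m_0(\xi_0 - 2\tau) & m_1(\xi_0 - 2\tau) & m_2(\xi_0 - 2\tau) & \cdots \\
\cdots & * & m_0(\xi_0 - \tau) & m_1(\xi_0 - \tau) & m_2(\xi_0 - \tau) & \cdots \\
\cdots & * & * & m_0(\xi_0) & m_1(\xi_0) & m_2(\xi_0) & \cdots
\end{pmatrix},
\end{equation*}
and in the column where these three contribute respectively $m_0(\xi_0 - 2\tau)$, $m_1(\xi_0 - \tau)$, $m_2(\xi_0)$ all entries vanish by \eqref{condition}. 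One then checks that \emph{no other} row of $L_{\xi_0}$ has a nonzero entry in that same column. This is exactly where the constraints $\alpha > \frac 56$ and $0.99 < \xi_0 < 1$ are used: they guarantee that the combinatorics of the return map is simple enough that the column in question only meets these three rows (each $M$-block has length $N = 3$, and the neighbouring blocks are displaced by $1$ or by $\tau$, so the overlap pattern is forced). Granting this, the standard basis vector $e_j$ supported on that column satisfies $L_{\xi_0} e_j = 0$.

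Finally, I would transfer this to the non-frame conclusion. Exactly as in the proof of Proposition \ref{nfprop} and in the argument recalled in Theorem \ref{irrational}: a vector $v \in \ell^2(\Z)$ with $L_{\xi_0} v = 0$ shows that the operator $\mathcal L$ of \eqref{loper} is not bounded below (one can smear $\xi_0$ over a small interval, or invoke that $L_{\xi_0}$ acting on $\{G(\xi_0 + j/\alpha)\}_j$ is literally a section of $\mathcal L$), hence the lower frame bound in \eqref{A19a} fails, hence $\gG(f;\alpha,1)$ is not a frame. The main obstacle is the bookkeeping in the middle step: verifying that the prescribed column of $L_{\xi_0}$ is hit by only those three rows and that their three entries are precisely $m_0(\xi_0 - 2\tau), m_1(\xi_0 - \tau), m_2(\xi_0)$ — this requires carefully tracking the shifts $\mathbb O_q$ in \eqref{Lmatrix} under the hypotheses $\alpha \in (\tfrac56, 1)$, $\xi_0 \in (0.99, 1)$, and is the only place where the specific numerical thresholds matter.
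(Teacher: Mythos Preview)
Your plan is essentially the paper's argument, recast in the matrix language of $L_\xi$: both exploit that the single point $t=\xi_0-\frac{2}{\alpha}$ is hit by exactly three triples $(\xi,n,s)$ with $\xi\in(0,\tfrac1\alpha)$, and that the corresponding coefficients $m_s(\xi)$ are precisely the three quantities in \eqref{condition}. The paper does this more directly---it never invokes $L_\xi$, but simply takes $G$ to be the characteristic function of a $\delta$-neighbourhood of $t$ and plugs into \eqref{A19a}, noting that each nonzero term carries one of the vanishing $m_s$ values. Your $e_j\in\ker L_{\xi_0}$ is the $\delta\to 0$ limit of that test.

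One concrete warning on the bookkeeping you flag as the obstacle: your displayed block and your verbal claim are inconsistent. As you have drawn the shifts (each successive row pushed one step to the right), the unique column in which all three rows overlap carries the entries $m_2(\xi_0-2\tau),\,m_1(\xi_0-\tau),\,m_0(\xi_0)$, \emph{not} $m_0(\xi_0-2\tau),\,m_1(\xi_0-\tau),\,m_2(\xi_0)$. To get the latter triple in one column the shifts must go the other way. (The paper's own proof of this theorem writes the criterion with $G(\xi-n-\tfrac{s}{\alpha})$, opposite in sign to \eqref{A19a}; under that convention your claimed triple is the correct one, so make sure you fix one convention and carry it through consistently.) Once the orientation is sorted, the verification that no further rows meet the column is exactly the finite check you describe, and the numerical thresholds $\alpha>\tfrac56$, $\xi_0\in(0.99,1)$ are what make it go through.
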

\begin{proof}
By the Theorem \ref{mainth} a necessery and suficient condition for our system to be a frame is a bound
\begin{equation}\label{equivalent}
\intt_0^{1/\alpha}\summ_{n\in \Z}|\summ_{s = 0}^2 G\biggl{(}\xi - n - \frac{s}{\alpha}\biggr{)}m_s(\xi)|^2d\xi \gtrsim ||G||_{L^2(\R)}^2.
\end{equation}

Let us take the function $G$ being the characteristic function of $\delta$-vicinity of the point $\xi_0 - \frac{2}{\alpha}$. It is easy to see that  \eqref{equivalent} is not satisfied for small enough $\delta$ depending on $\eps$ even if we bound its left-hand side by 
\begin{equation}
\intt_0^{1/\alpha}\summ_{n\in \Z}3\summ_{s = 0}^2 \biggl{|}G(\xi - n - \frac{s}{\alpha})\biggr{|}^2|m_s(\xi)|^2d\xi 
\end{equation}
since for all $t$ such that $G(t)\ne 0$ corresponding $|m_s(\xi)|$ can be made smaller than any $\eps > 0$ if $\delta$ is small enough.
\end{proof}
\begin{remark}
Note that the similar result can be proved for the sum of arbitrary many kernels as long as $\alpha$ and $\xi$ are close enough to $1$ so that we don't have to worry about additional conditions coming from  $\xi \in (1, \frac{1}{\alpha})$.
\end{remark}

For fixed $\alpha, w_1, w_2, w_3$ conditions \eqref{condition} are three homogenious linear equations in $a_1, a_2, a_3$.
For this system to have a nontrivial solution corresponding $3\times 3$ determinant has to be zero. We will construct numbers $\alpha, w_1, w_2, w_3$ such that this determinant is zero and $\Re w_k$ are pairwise different. 
Moreover, in our construction we would have $w_2 = \frac{1}{2\pi}$, $w_3 = -\frac{1}{2\pi}$ and arbitrary $\alpha$ (rational or irrational) with $|\alpha - \frac{6}{7}| < \frac{1}{1000}$.

Let us begin with explicitly writing down the matrix corresponding to  \eqref{condition}
\begin{equation*}
\begin{pmatrix}
e^{2\pi w_1 (\xi _0- \frac{2}{\alpha} + 2)}&
e^{2\pi w_2 (\xi _0 - \frac{2}{\alpha} + 2)}&
e^{2\pi w_3 (\xi _0 - \frac{2}{\alpha} + 2)}\\
e^{2\pi w_1(\xi _0 - \frac{1}{\alpha} + 1)}(e^{\frac{2\pi}{\alpha}w_2} + e^{\frac{2\pi}{\alpha}w_3})&
e^{2\pi w_2(\xi _0 - \frac{1}{\alpha} + 1)}(e^{\frac{2\pi}{\alpha}w_1} + e^{\frac{2\pi}{\alpha}w_3})&
e^{2\pi w_3(\xi _0 - \frac{1}{\alpha} + 1)}(e^{\frac{2\pi}{\alpha}w_1} + e^{\frac{2\pi}{\alpha}w_2})\\
e^{2\pi w_1\xi _0 + \frac{2\pi}{\alpha}w_2 + \frac{2\pi}{\alpha}w_3}&
e^{2\pi w_2\xi _0 + \frac{2\pi}{\alpha}w_1 + \frac{2\pi}{\alpha}w_3}&
e^{2\pi w_3\xi _0 + \frac{2\pi}{\alpha}w_1 + \frac{2\pi}{\alpha}w_2}
\end{pmatrix}.
\end{equation*}

First of all we note that whether this determinant is zero or not does not depend on $\xi_0$ (in particular it is irrelevant if $0.99 < \xi_0 < 1$ or not). 
Therefore, without loss of generality we can assume that $\xi_0 = \frac{1}{\alpha} - 1$. Expanding the determinant (which we view as a function of $w_1$) and dividing it by $e^{\frac{2\pi}{\alpha}(w_1+w_2+w_3)}$ we get
$$
F(w_1) = e^{2\pi w_1}(e^{-2\pi w_2}- e^{-2\pi w_3}) - e^{-2\pi w_1}(e^{2\pi w_2} - e^{2\pi w_3}) - e^{\eps 2\pi w_1}(e^{-\eps 2\pi w_2} 
$$
$$
- e^{-\eps 2\pi w_3}) + e^{-\eps 2\pi w_1}(e^{\eps2\pi w_2} - e^{\eps 2\pi w_3}) + C,
$$

where $\eps = \frac{1}{\alpha} - 1$ and $C$ is a constant such that $F(w_2) = F(w_3) = 0$.

Let us first put $\alpha = \frac{6}{7}$. In that case $\eps = \frac{1}{6}$ and thus $e^{2\pi w_1} = (e^{\eps 2\pi w_1})^6$. Denoting $e^{\eps 2\pi w_1} = z$ and recalling that $w_2 = \frac{1}{2\pi}$, $w_3 = -\frac{1}{2\pi}$ the equation is rewritten as  
\begin{equation}
(e - e^{-1})(z^6 + z^{-6}) - (e^{1/6} - e^{-1/6})(z + z^{-1}) - e^2 + e^{-2} + e^{1/3} - e^{-1/3} = 0.
\end{equation}

This equation has solutions $z_1 = e^{1/6}\approx 1.18, z_2 = e^{-1/6}\approx 0.85$. But one can numerically verify that it also has negative solutions $z_3 \approx -1.12$, $z_4  \approx -0.89$.
 They correspond for example to $w_1 = 0.108 + 3i$ and $w_1 = -0.111 + 3i$ respectively. For any $\alpha$ close to $\frac{6}{7}$ we can find a close solution by the argument principle.

\medskip

So, we proved the following theorem.
\begin{theorem}
There exists a rational window $g$ of degree $3$ and irrational number $\alpha<1$ such that
$\gG(g;\alpha,1)$ is not a frame.
\label{nfirrth}
\end{theorem}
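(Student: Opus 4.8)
The plan is to realize explicitly the hypotheses of the non-frame criterion established above (the theorem built around condition~\eqref{condition}): we must exhibit poles $iw_1,iw_2,iw_3$ with $\Re w_k$ pairwise distinct and nonzero, an irrational $\alpha\in(\tfrac56,1)$, a value $\xi_0\in(0.99,1)$, and coefficients $a_1,a_2,a_3$, not all zero, for which the three identities in~\eqref{condition} hold. Since~\eqref{condition} is a homogeneous linear system in $(a_1,a_2,a_3)$, a first step is to observe that its $3\times3$ coefficient matrix equals $A_0\,D(\xi_0)$ with $A_0$ independent of $\xi_0$ and $D(\xi_0)=\diag\big(e^{2\pi w_1\xi_0},e^{2\pi w_2\xi_0},e^{2\pi w_3\xi_0}\big)$; hence its determinant vanishes for one $\xi_0$ iff it vanishes for all $\xi_0$, and a nontrivial solution for one $\xi_0$ furnishes one for every $\xi_0$, with the same coordinates vanishing or not. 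So it suffices to make $\det A_0=0$; this determinant is most comfortably evaluated at $\xi_0=\tfrac1\alpha-1$, after which one is free to pick any genuine $\xi_0\in(0.99,1)$ for the application of the criterion.

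Next I would fix $w_2=\tfrac1{2\pi}$ and $w_3=-\tfrac1{2\pi}$ once and for all (real, with distinct nonzero real parts) and read $\det A_0=0$ as a single scalar equation $F(w_1)=0$, in which $\eps:=\tfrac1\alpha-1$ enters the exponents. Specializing to $\alpha=\tfrac67$ gives $\eps=\tfrac16$, so $e^{\pm2\pi w_1}=\big(e^{\pm\eps2\pi w_1}\big)^{6}$; writing $z:=e^{\eps2\pi w_1}$, the equation becomes the Laurent-polynomial identity
\begin{equation*}
(e-e^{-1})(z^6+z^{-6})-(e^{1/6}-e^{-1/6})(z+z^{-1})-e^2+e^{-2}+e^{1/3}-e^{-1/3}=0.
\end{equation*}
By construction this is satisfied by $z=e^{1/6}$ and $z=e^{-1/6}$, but these are \emph{forbidden}: they force $w_1=w_2$ and $w_1=w_3$ respectively. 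On the other hand there are further roots, and a numerical check exhibits negative real ones, $z_3\approx-1.12$ and $z_4\approx-0.89$. A negative real value of $z$ forces $\Im w_1\neq0$ together with $\Re w_1=\tfrac{\ln|z|}{2\pi\eps}\notin\{0,\tfrac1{2\pi},-\tfrac1{2\pi}\}$ (numerically $w_1\approx0.108+3i$ or $w_1\approx-0.111+3i$), so the triple $(w_1,w_2,w_3)$ is admissible; for such $w_1$ the system~\eqref{condition} has a one-dimensional solution space, and one checks that its spanning vector has all three coordinates nonzero, so the corresponding $g$ is genuinely of degree $3$.

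Finally I would pass from $\alpha=\tfrac67$ to an irrational $\alpha$. Regarded as a function of the complex variable $w_1$, the determinant $\det A_0$ (equivalently $F$) is entire, not identically zero, and depends continuously on $\alpha$; at $\alpha=\tfrac67$ it has an isolated zero at the point $w_1^{0}$ coming from $z_3$. Enclosing $w_1^{0}$ in a small disk containing no other zero and invoking the argument principle, one obtains, for every $\alpha$ sufficiently close to $\tfrac67$, a zero $w_1(\alpha)$ of the corresponding determinant inside that disk; choosing $\alpha$ irrational with $|\alpha-\tfrac67|<\tfrac1{1000}$ keeps $\alpha\in(\tfrac56,1)$ and, by continuity, keeps $\Re w_1(\alpha),\Re w_2,\Re w_3$ pairwise distinct and nonzero and the spanning coefficient vector coordinate-wise nonzero. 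Picking any $\xi_0\in(0.99,1)$ and the corresponding $(a_1,a_2,a_3)$, the criterion above applies to $(g,\alpha)$ and shows that $\gG(g;\alpha,1)$ is not a frame. The only genuinely non-formal point --- and hence the main obstacle --- is verifying that the Laurent polynomial above has the extra negative real roots beyond the two forced ones; this is unavoidably computational, together with the attendant bookkeeping that the resulting $w_1$ and coefficients $a_k$ meet all the side constraints.
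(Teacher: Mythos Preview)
Your proposal is correct and follows essentially the same route as the paper: reduce \eqref{condition} to the vanishing of a $3\times 3$ determinant independent of $\xi_0$, fix $w_2=\tfrac1{2\pi}$, $w_3=-\tfrac1{2\pi}$, specialize to $\alpha=\tfrac67$ to obtain the Laurent polynomial in $z=e^{2\pi\eps w_1}$, locate the extra negative real roots numerically, and then perturb to irrational $\alpha$ via the argument principle. You add a bit more bookkeeping than the paper (the explicit factorization $A_0\,D(\xi_0)$ and the check that all $a_k\neq0$), but the argument is the same.
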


\section{Sampling in shift-invariant spaces \label{shiftinv}} 

The results on Gabor frames allow us to obtain new theorems on sampling in shift-invaraint spaces,
generated by rational windows. We follow mainly the pattern of \cite{Gro}, which in turns relies on the Janssen's version of duality theory.

First, we remind the basic definitions. Given a function $g$ which belongs to the Wiener amalgam space $W_0=W(\ell^1,\mathbb{C})$ i.e. $g$ is continuous
and
\begin{equation}
\|g\|_{W_0}=\sum_{k\in\mathbb{Z}}\max_{x\in[k,k+1]}|g(x)|<\infty.
\label{amalgam}
\end{equation}
Consider the shift-invariant space $V^2(g)$ which consists of the functions of the form
\begin{equation}
f(t)=\sum_{k\in\mathbb{Z}}c_kg(t-k),\quad \{c_k\}\in\ell^2(\mathbb{Z}).
\label{fV2g}
\end{equation}
Clearly, $V^2(g)\subset L^2(\mathbb{R})$ and
\begin{equation}
\|f\|_{L^2}\leq \|g\|_{W_0}\|\{c_k\}\|_2.
\label{fgWineq}
\end{equation}

We will assume the following stability of the generator $g$:

\begin{proposition}[see e.g. \cite{Ron}] 
The following properties are equivalent
\begin{enumerate}
\begin{item}
There exists $C>0$ such that 
\begin{equation}
\|\sum_{k\in\mathbb{Z}}c_kg(t-k)\|\geq C\|\{c_k\}\|_2;
\end{equation}
\end{item}
\begin{item}
\begin{equation}
\sum_{k\in\mathbb{Z}}|\hat{g}(\xi-k)|^2>0 \quad \text{ for all } \xi\in\mathbb{R}.
\label{fgineq}
\end{equation}
\end{item}
\end{enumerate}
\end{proposition}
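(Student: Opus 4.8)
The plan is to pass to the Fourier side and reduce both conditions to a single pointwise statement about the periodization
$$\Phi(\xi):=\sum_{k\in\mathbb{Z}}\bigl|\hat g(\xi-k)\bigr|^2.$$
For $f=\sum_k c_k g(\cdot-k)$ one has $\hat f(\xi)=m(\xi)\hat g(\xi)$, where $m(\xi)=\sum_k c_k e^{-2\pi i k\xi}$ is $1$-periodic and, by Parseval for Fourier series, $\|m\|_{L^2(0,1)}=\|\{c_k\}\|_{\ell^2}$; moreover $m$ runs over all of $L^2(0,1)$ with matching norm as $\{c_k\}$ runs over $\ell^2(\mathbb{Z})$. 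Folding the integral $\|f\|_{L^2(\mathbb{R})}^2=\int_{\mathbb{R}}|m(\xi)|^2|\hat g(\xi)|^2\,d\xi$ over the cosets of $\mathbb{Z}$ yields
\begin{equation}
\|f\|_{L^2(\mathbb{R})}^2=\int_0^1|m(\xi)|^2\,\Phi(\xi)\,d\xi ,
\end{equation}
with $\Phi\in L^1(0,1)$ since $\int_0^1\Phi=\|g\|_{L^2}^2<\infty$. Hence property (i) is equivalent to $\int_0^1|m|^2\Phi\ge C^2\int_0^1|m|^2$ for every $m\in L^2(0,1)$, which (testing against indicator functions of small sets) is in turn equivalent to $\Phi(\xi)\ge C^2$ for a.e.\ $\xi$.

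It remains to see that $\Phi$ is continuous, so that ``$\Phi>0$ everywhere'', ``$\Phi>0$ a.e.'' and ``$\inf\Phi>0$'' all coincide; this is the only place the amalgam hypothesis $g\in W_0$ enters. The Fourier coefficients of the $1$-periodic function $\Phi$ are $\widehat\Phi(n)=\int_{\mathbb{R}}|\hat g(\eta)|^2e^{-2\pi i n\eta}\,d\eta$, i.e.\ (up to the harmless reflection $n\mapsto -n$) the integer samples of the autocorrelation $G(x)=\int_{\mathbb{R}}g(t)\overline{g(t-x)}\,dt=(g*\tilde g)(x)$, where $\tilde g(x)=\overline{g(-x)}$. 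Since $W_0=W(\ell^1,\mathbb{C})$ is closed under convolution, $G\in W_0$; in particular $\sum_{n\in\mathbb{Z}}|G(n)|\le\sum_{n}\max_{[n,n+1]}|G|<\infty$, so $\Phi$ has an absolutely convergent Fourier series and is therefore continuous and bounded. Consequently (i) $\Leftrightarrow$ $\min_{[0,1]}\Phi>0$ $\Leftrightarrow$ (ii).

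Finally, the implication (ii)$\Rightarrow$(i) can also be seen directly by contraposition: if $\Phi(\xi_0)=0$ for some $\xi_0$, pick normalized bumps $m_j\in L^2(0,1)$ concentrating at $\xi_0$; then $\int_0^1|m_j|^2\Phi\to0$ while $\|m_j\|_{L^2(0,1)}=1$, so the associated coefficient sequences violate the lower bound. The main (and essentially only non-routine) obstacle is the regularity step: for a generic $g\in L^2(\mathbb{R})$ the function $\Phi$ is defined only a.e.\ and ``positive a.e.'' is strictly weaker than ``bounded below''; the convolution–algebra property of $W_0$ is exactly what closes this gap.
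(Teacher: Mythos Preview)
The paper does not supply a proof of this proposition; it is quoted as a known result with a reference to \cite{Ron}. Your argument is the standard one and is correct: pass to the Fourier side to obtain $\|f\|_2^2=\int_0^1|m|^2\Phi$, reduce (i) to $\operatorname{ess\,inf}\Phi>0$, and then use continuity of $\Phi$ to identify this with the pointwise condition (ii). The continuity step is handled properly: the Fourier coefficients of $\Phi$ are the integer samples of $g*\tilde g$, and the convolution-algebra property of $W_0$ gives $g*\tilde g\in W_0$, hence $\sum_n|\widehat\Phi(n)|<\infty$. There is nothing in the paper to compare your approach against.
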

We say that a sequence $\Lambda\subset\mathbb{R}$ is called {\it separated} if for some $\delta>0$, $|\lambda-\mu|>\delta$, $\lambda,\mu\in\Lambda,\lambda\neq\mu$.

A separated sequence $\Lambda$ is called {\it sampling} for $V^2(g)$ if there exist constants $A,B>0$ such that
\begin{equation}
A\|f\|^2_2\leq\sum_{\lambda\in\Lambda}|f(\lambda)|^2\leq B\|f\|^2_2,\quad f\in V^2(g).
\end{equation}

In what follows we apply these definitions to the sequences $\alpha\mathbb{Z}$, $\alpha<1$ and spaces $V^2(g)$ generated by the rational function $g$.

\medskip

The relation between sampling and frame properties is given by the following statement.

\begin{proposition} Let $g\in W_0$ has stable integer shifts. The following are equivalent:
\begin{enumerate}
\begin{item}
The family $\gG(g;\alpha,1)$ is a frame for $L^2(\mathbb{R})$;
\end{item}
\begin{item}
There exists $A,B>0$ such that for each $x\in[0,1]$ and $f$ of the form \eqref{fV2g}
\begin{equation}
A\|\{c_k\}\|^2_2\leq\sum_{m\in\mathbb{Z}}|f(x-\alpha m)|^2\leq B\|\{c_k\}\|^2_2.
\end{equation}
\end{item}
\end{enumerate}
\label{V2prop}
\end{proposition}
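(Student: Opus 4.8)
The plan is to exploit the Fourier/Zak-type duality that underlies the entire paper: a Gabor frame condition for $\gG(g;\alpha,1)$ over $L^2(\R)$ should be transferred, via the Poisson summation formula applied to the period-$1$ modulations, to a statement purely about the sampled values $f(x-\alpha m)$ of functions $f\in V^2(g)$. Concretely, I would start from the frame inequality \eqref{eq:03} with $\beta=1$ and, for a fixed test function, expand the inner products $\langle f,\pi_{\alpha m,n}g\rangle$. Summing in $n$ first and using that $\{e^{2\pi i n t}\}_{n\in\Z}$ is an orthonormal basis of $L^2(0,1)$, the double sum $\sum_{m,n}|\langle f,\pi_{\alpha m,n}g\rangle|^2$ collapses (after periodization) to $\sum_m \int_0^1 |F_m(x)|^2\,dx$ where $F_m(x)=\sum_{k\in\Z} f(x+k)\overline{g(x+k-\alpha m)}$. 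The crucial observation is that the map $f\mapsto \{f(x+k)\}_{k\in\Z}$ and the analysis operator of the system $\{g(\cdot-\alpha m)\}_m$ restricted to a shift-invariant fibre identify the Gabor frame bounds with the bounds in statement (ii) after the standard fibrewise (Zak) decomposition.

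The key steps, in order, would be: (1) recall that $g\in W_0$ with stable shifts means $V^2(g)$ is a closed subspace of $L^2(\R)$ and the synthesis map $\{c_k\}\mapsto f=\sum_k c_k g(\cdot-k)$ is bounded below and above in the respective norms; (2) fix $x\in[0,1]$ and decompose $L^2(\R)=\bigoplus$ over the fibres $x+\Z$, so that computing $\langle f,\pi_{\alpha m,n}g\rangle$ for $f\in L^2(\R)$ reduces, via Poisson summation in the variable $n$, to an integral over $x\in[0,1]$ of a fibrewise pairing; (3) recognize that the resulting quantity $\sum_m |\sum_k f(x+k)\overline{g(x+k-\alpha m)}|^2$ is exactly $\sum_m |f_\sharp(x-\alpha m)|^2$ when $f$ lies in $V^2(g)$ — here one uses that for $f=\sum_j c_j g(\cdot-j)$ the fibre of $f$ against shifts of $g$ reproduces the point values $f(x-\alpha m)$ because of the reproducing structure of $V^2(g)$; (4) combine with step (1) to see that $\|f\|_{L^2}^2 \asymp \|\{c_k\}\|_2^2$, so the Gabor frame bounds $A\|f\|^2\le \cdots\le B\|f\|^2$ become precisely the two-sided bound in (ii) with (possibly) adjusted constants, uniformly in $x\in[0,1]$; (5) run the argument in reverse to get the converse implication, using density of $V^2(g)$-type test functions and the amalgam condition \eqref{amalgam} to justify all interchanges of summation.

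I would follow the pattern of \cite[Ch.~7]{Gro} here, invoking Janssen's duality: the frame property of $\gG(g;\alpha,1)$ in $L^2(\R)$ is equivalent to a lower and upper bound for the fibred analysis operator, and the fibres are indexed by $x\in[0,1]$ with the fibre space being $\ell^2(\Z)$ on which $\{g(\cdot-\alpha m)|_{x+\Z}\}_m$ acts. The content of the proposition is then the identification of this fibre operator with the sampling operator $\{c_k\}\mapsto \{f(x-\alpha m)\}_m$, which is where the hypothesis "$g$ has stable integer shifts" enters decisively: it guarantees the fibre spaces are non-degenerate and the two $\ell^2$ norms $\|\{c_k\}\|_2$ and $\|f\|_{L^2}$ are comparable.

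The main obstacle I anticipate is bookkeeping rather than conceptual: making the Poisson-summation/periodization step fully rigorous requires that all the relevant series converge absolutely and can be rearranged, which is exactly what membership in the Wiener amalgam space $W_0$ and \eqref{amalgam} are designed to provide, but one must be careful that $f\in V^2(g)$ (not an arbitrary $L^2$ function) so that the sampled values $f(x-\alpha m)$ are even well-defined pointwise — this uses $g\in W_0 \Rightarrow V^2(g)\subset W_0 \subset C(\R)$. A second, subtler point is that the frame inequality in (i) is stated for all $f\in L^2(\R)$ whereas (ii) only concerns $f\in V^2(g)$; the reduction from the former to the latter goes through the fibrewise decomposition, where each fibre of an arbitrary $L^2$ function plays the role of a generic coefficient sequence $\{c_k\}$, so no information is lost. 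Once these convergence and identification issues are handled, the equivalence of the constants is immediate.
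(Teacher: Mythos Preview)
The paper does not supply its own proof of this proposition: immediately after the statement it writes ``We refer the reader to (now) classical article \cite{Gro1} for the proof of this proposition and also to \cite{Gro} for more general sequences.'' Your sketch is precisely the standard argument from those references --- periodize the inner products $\langle h,\pi_{\alpha m,n}g\rangle$ in the $t$-variable, apply Parseval in $n$ to reduce the Gabor frame inequality to $\int_0^1 \sum_m |H_m(x)|^2\,dx$ with $H_m(x)=\sum_k h(x+k)\overline{g(x+k-\alpha m)}$, and then use a direct-integral argument to pass to the fibrewise bound --- so there is nothing to contrast.

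One small point worth cleaning up: in your step (3) you describe the identification as coming from ``the reproducing structure of $V^2(g)$'' and restrict the Gabor test function to $V^2(g)$. Neither is needed. For an \emph{arbitrary} $h\in L^2(\R)$ and fixed $x$, the fibre sequence $c_k=h(x+k)$ is a generic element of $\ell^2$, and the algebraic identity $H_m(x)=\overline{\phi(x-\alpha m)}$ with $\phi=\sum_j \overline{c_{-j}}\,g(\cdot-j)\in V^2(g)$ is immediate from the definition of $V^2(g)$, not from any reproducing kernel. This is exactly what you say later in your ``second, subtler point,'' so the idea is there; just make the identification in step (3) explicit and drop the reproducing-kernel language. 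The passage from ``a.e.\ $x$'' (which is what the direct-integral argument gives) to ``every $x\in[0,1]$'' does require the continuity coming from $g\in W_0$, as you note.
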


We refer the reader to (now) classical article \cite{Gro1} for the proof of this proposition
and also to \cite{Gro} for more general sequences.

\smallskip

We combine Proposition \ref{V2prop} with Theorems \ref{irrth} and \ref{irrational}.
\begin{lemma} Let the function $g$ have the form \eqref{eq:0_1} and, in addition $g\in W_0$
and $m_0(\xi)\neq0$, $\xi>0$. Then for each $\alpha\not\in\mathbb{Q}$, $\alpha\in(0,1)$ there exist $A_\alpha,B_\alpha>0$
such that
\begin{equation}
A_\alpha\|\{c_k\}\|^2_2\leq\sum_{k\in\mathbb{Z}}|f(\alpha k)|^2\leq B_\alpha\|\{c_k\}\|^2_2
\label{fcineq}
\end{equation}
for each function $f$ of the form $f(t)=\sum_kc_kg(t-k)$.
\end{lemma}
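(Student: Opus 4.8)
The plan is to obtain the lemma as an immediate consequence of the Gabor frame theorems already proved, via the equivalence recorded in Proposition \ref{V2prop}; there is no new analytic estimate to make, only a matching of hypotheses and a specialization of a free parameter.

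First I would check the standing assumptions needed to invoke Proposition \ref{V2prop}: $g\in W_0$ by hypothesis, and since $\Re w_k\neq 0$ the function $g$ is continuous on $\mathbb R$, so that $V^2(g)$ and the pointwise values $f(\alpha k)$ make sense and the amalgam bound \eqref{fgWineq} holds; the stable integer shift condition \eqref{fgineq} either is to be included among the hypotheses or follows from the fact that $m_0$, and hence $\hat g$, is nonvanishing on a half-line. Next, because $\alpha\in(0,1)\setminus\mathbb Q$ and $m_0(\xi)\neq 0$ for $\xi>0$, Theorems \ref{irrth} and \ref{irrational} (applied with $\beta=1$) tell us that $\gG(g;\alpha,1)$ is a frame for $L^2(\mathbb R)$. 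Proposition \ref{V2prop} then yields constants $A,B>0$, independent of $x$, with
\begin{equation*}
A\|\{c_k\}\|_2^2\le\sum_{m\in\mathbb Z}|f(x-\alpha m)|^2\le B\|\{c_k\}\|_2^2
\end{equation*}
for every $x\in[0,1]$ and every $f$ of the form \eqref{fV2g}.

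Finally I would specialize to the admissible value $x=0\in[0,1]$. Since $m\mapsto -m$ is a bijection of $\mathbb Z$, the orbit $\{-\alpha m:m\in\mathbb Z\}$ equals $\alpha\mathbb Z$, so $\sum_{m\in\mathbb Z}|f(-\alpha m)|^2=\sum_{k\in\mathbb Z}|f(\alpha k)|^2$, and the displayed double inequality at $x=0$ is exactly \eqref{fcineq} with $A_\alpha=A$, $B_\alpha=B$. I do not anticipate any real obstacle: all the difficulty already sits inside Theorem \ref{irrational} and Proposition \ref{V2prop}. The only points deserving a line of care are confirming the stable shift hypothesis of Proposition \ref{V2prop} and observing that the sampling set $\alpha\mathbb Z$ is literally the orbit $x+\alpha\mathbb Z$ for the allowed choice $x=0$, so that one does not even need to appeal to the density of $\{x-\alpha m\bmod 1\}$ afforded by the irrationality of $\alpha$.
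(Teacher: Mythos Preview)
Your plan matches the paper's: the written proof consists exactly of the one point you flag as needing care, namely verifying the stable-shift condition \eqref{fgineq}, after which Theorems \ref{irrth}/\ref{irrational} together with Proposition \ref{V2prop} (specialized at $x=0$, just as you do) give the result. One correction worth noting: the paper checks \eqref{fgineq} by writing $\hat g(\xi)$ explicitly as the partial exponential sum over those $w_k$ with $\Re w_k$ of the appropriate sign and observing that the dominant term forces $\hat g(\xi)\neq0$ for all sufficiently large $|\xi|$; this argument does not use the hypothesis $m_0\neq0$ at all, and your suggested implication ``$m_0\neq0$, hence $\hat g\neq0$ on a half-line'' is not valid in general, since $m_0(\xi)=\sum_k a_k e^{2\pi\xi w_k}$ involves \emph{all} the $w_k$ whereas $\hat g$ on each half-line involves only those with $\Re w_k$ of a fixed sign.
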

\begin{proof}
It suffices to prove that $g$ admits stable sampling, for example check the inequality \eqref{fgineq}. We have
$$
\hat{g}(\xi)=
\begin{cases}
\sum_{w_k<0}a_ke^{2\pi\xi w_k}, \quad \xi>0\\
-\sum_{w_k>0}a_ke^{2\pi\xi w_k}, \quad \xi<0
\end{cases}.
$$
each sum in the right hand-side has the leading term as $|\xi|\rightarrow\infty$ (it corresponds to the smallest $|w_k|$).
Thus, we have \eqref{fgineq}.
\end{proof}

\begin{theorem}
Let the function $g$ have the form \eqref{eq:0_1} and, in addition, $g\in W_0$ and $m_0(\xi)\neq0$, $\xi>0$.
Then, for each $\alpha\in(0,1)$ the set $\alpha\mathbb{Z}$ is a sampling for $V^2(g)$ i.e. there exist $A_\alpha,B_\alpha>0$
such that
\begin{equation}
A_\alpha\|\{c_k\}\|^2_2\leq\sum_{k\in\mathbb{Z}}|f(\alpha k)|^2\leq B_\alpha\|\{c_k\}\|^2_2,\quad f\in V^2(g).
\label{Vineq}
\end{equation}
\end{theorem}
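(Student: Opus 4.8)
The plan is to prove the two-sided bound \eqref{Vineq}, treating the irrational and the rational values of $\alpha$ separately. The right-hand (Bessel) inequality is routine: since $g\in W_0$ and $\alpha\mathbb{Z}$ is separated, the bi-infinite matrix $\bigl(g(\alpha k-j)\bigr)_{k,j\in\mathbb{Z}}$ has uniformly bounded $\ell^1$ row and column sums by \eqref{amalgam}, and a Schur test yields $\sum_k|f(\alpha k)|^2\lesssim\|\{c_k\}\|_2^2$. Moreover $g$ has stable integer shifts: as in the proof of the preceding Lemma, the explicit form of $\hat g$ gives \eqref{fgineq}, hence $\|f\|_{L^2}\asymp\|\{c_k\}\|_2$. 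So it remains to establish the lower bound, equivalently $\sum_k|f(\alpha k)|^2\gtrsim\|\{c_k\}\|_2^2$; and for $\alpha\notin\mathbb{Q}$ this is exactly the preceding Lemma, which was derived from Theorem~\ref{irrational} via Proposition~\ref{V2prop}. Thus the only new case is $\alpha=p/q\in(0,1)\cap\mathbb{Q}$, written in lowest terms, for which $\gG(g;\alpha,1)$ need not be a frame and Proposition~\ref{V2prop} is no longer available.

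For rational $\alpha$ I would fiberize by Poisson summation. Writing $C(\xi)=\sum_k c_ke^{-2\pi ik\xi}$ for the $1$-periodic symbol, so that $\|C\|_{L^2(\mathbb{T})}=\|\{c_k\}\|_2$ and $\hat f=C\hat g$, one gets
\[
\sum_k|f(\alpha k)|^2=\frac1\alpha\int_0^{1/\alpha}\Bigl|\sum_{n\in\mathbb{Z}}C(\xi+n/\alpha)\,\hat g(\xi+n/\alpha)\Bigr|^2 d\xi .
\]
Since $1/\alpha=q/p$ and $C$ is $1$-periodic, the inner sum, grouped according to the residue of $n$ modulo $p$, depends on $C$ only through its $p$ polyphase components $C(\xi+r/p)$, $r=0,\dots,p-1$, the coefficients being convergent sub-sums of the periodization of $\hat g$; by \eqref{Zakm} this periodization is the Zak transform $\mathcal{Z}$ from the discussion preceding Theorem~\ref{close1th}, whose dependence on the window is carried by the multipliers $m_s$. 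In this way the required lower bound turns into the statement that a certain $\xi$-dependent linear operator on $\mathbb{C}^p$, assembled from shifted copies of the row $M(\cdot)=(m_0(\cdot),\dots,m_{N-1}(\cdot))$ just as the operator $L_\xi$ of Section~\ref{irrpr} is, is boundedly invertible uniformly in $\xi$. This is the restriction of the frame criterion of Theorem~\ref{mainth} to the (Fourier image of the) subspace $V^2(g)$, and is genuinely weaker than the frame property — which is exactly why it can survive for rational $\alpha$.

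The bulk of the work, and the main obstacle, is this uniform invertibility. I would carry it out along the lines of the proofs of Lemma~\ref{tech} and of Theorem~\ref{sdth}: use \eqref{Zakm} to factor the polyphase matrix as a product of a matrix built from the values $m_s(\cdot)$ and Vandermonde-type matrices in the quantities $u_k=e^{2\pi w_k/\alpha}$; the Vandermonde factor is invertible because the poles are distinct (arguing around coinciding real parts $\Re w_k$ as in the previous sections), and after $\xi$-independent row operations the $m$-factor becomes block-triangular with diagonal entries equal to values of $m_0$. The hypothesis $m_0(\xi)\neq0$ for $\xi>0$, together with continuity of $m_0$ on the compact subset of $[0,\infty)$ into which the relevant fibers $\xi+n/\alpha$ fall, makes these diagonal entries bounded away from zero and gives invertibility with constants independent of $\xi$. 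The delicate point is precisely this uniformity: the determinants that arise are exponential polynomials, so a bare "generic non-vanishing" argument would only yield non-vanishing for a.e.\ $\xi$ — enough for completeness, but not for a genuine sampling inequality — and it is the structural block-triangular reduction, which pins the diagonal to values of $m_0$, that produces $\xi$-uniform bounds.
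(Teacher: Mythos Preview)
Your lower bound for irrational $\alpha$ matches the paper exactly: both just invoke the preceding Lemma (hence Theorem~\ref{irrational} via Proposition~\ref{V2prop}).

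For the upper bound the paper takes a different, specifically ``rational-window'' route. Rather than a Schur test it multiplies $f(t)=\sum_k a_k\sum_n c_n/(t-n-iw_k)$ by $Q(z)=\prod_{k=1}^N\bigl(1-e^{2\pi i(z-iw_k)}\bigr)$; the product $F=Qf$ is entire of exponential type and in $L^2(\RR)$, hence lies in a Paley--Wiener space, and the Plancherel--Polya inequality gives $\sum_k|F(\alpha k)|^2\lesssim\|F\|_2^2$. Since $|Q|\asymp1$ on $\RR$ (because $\Re w_k\neq0$) this yields the Bessel bound for $f$. Your Schur-test argument is correct and uses only $g\in W_0$; the paper's argument instead ties the estimate to the Paley--Wiener picture exploited in the remark that follows.

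The real discrepancy is the rational case, and here there is a genuine issue. The paper's proof does \emph{not} treat rational $\alpha$: for the left inequality it simply cites \eqref{fcineq}, which is the Lemma and is stated only for $\alpha\notin\Q$. The Remark immediately following the theorem says explicitly that ``condition $\alpha\not\in\mathbb{Q}$ cannot be omitted in general'' (citing \cite{LyubNes}). So despite the literal ``for each $\alpha\in(0,1)$'' in the statement, the intended and proved scope of the lower bound is irrational $\alpha$; you are attempting to prove strictly more than the paper does, and the Remark warns this is not available in general. Your observation that sampling at $\alpha\ZZ$ is genuinely weaker than the frame property is correct, but your sketch for rational $\alpha$ has a gap: the polyphase entries produced by your Poisson formula are $q$-periodizations of $\hat g$, not values of the $m_s$. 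Identity \eqref{Zakm} relates $\sum_s m_s(\xi)z^s$ to the generating function $\sum_k a_ke^{2\pi\xi w_k}/(1-zu_k)$, which is not the periodization of $\hat g$ --- recall that $\hat g(\xi)$ for $\xi>0$ picks up only those $w_k$ with $\Re w_k<0$, while $m_0$ involves all of them. So the claimed block-triangular reduction with $m_0$ on the diagonal is not substantiated, and the hypothesis $m_0(\xi)\neq0$ is not obviously the right one to drive a uniform lower bound on your polyphase matrix.
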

\begin{proof}
The left-hand side inequality is a direct consequemce of \eqref{fcineq} and \eqref{fgWineq}. The proof of the right-hand side
inequality follows the classical Plancherel-Polya pattern. Each function $f\in V^2(g)$ has the form
\begin{equation}
f(t)=\sum_{k=1}^Na_k\sum_{n=-\infty}^{\infty}\frac{c_n}{t-(n+iw_k)}
\label{ff}
\end{equation}
and thus admits analytic continuation to $\mathbb{C}\setminus\cup_{k=1}^N(\mathbb{Z}+iw_k)$.
Denote
$$q(x)=1-e^{2\pi iz},\quad Q(z)=\prod_{k=1}^Nq(z-iw_k)$$
and consider an entire function of exponential type which in addition belongs to $L^2(\mathbb{R})$
\begin{equation}
F(z)=Q(z)f(z).
\end{equation}
Fucntion $F$ belongs to Paley-Wiener space $\mathcal{PW}_a$ for some $a$ and, hence,  satisfies Plancherel-Polya inequality
$$\sum_{k\in\mathbb{Z}}|F(\alpha k)|^2\lesssim\|F\|^2_2.$$
\end{proof}

\begin{remark}
It follows from \cite{LyubNes} that condition $\alpha\not\in\mathbb{Q}$ cannot be omitted in general. On the other hand the right inequality 
in \eqref{Vineq} is not related to irrationality of $\alpha$.
\end{remark}
\begin{remark}
Relation \eqref{fcineq} can be viewed as a statement on sampling by linear combination of values of functions in the Paley-Wiener space. Problems of such type appear in study of eigenfunction expansions of operator pencils, see e.g. \cite{L2}.
\smallskip
Indeed, given $f$ of the form \eqref{ff} we denote 
$$H(z)=q(z)\sum_{n=-\infty}^{\infty}\frac{c_n}{z-n}.$$
We than have
$$H\in\mathcal{PW}_{[0,1]},\quad \|H\|_2\asymp\|\{c_k\}\|_{\ell^2},$$
and \eqref{Vineq} can be read as 
$$\sum_{j=-\infty}^{\infty}|\sum_{k=1}^Na_kq(\alpha j-iw_k)^{-1}H(\alpha j-iw_k)|^2\asymp\|H\|^2_2.$$
\end{remark}

\end{document}